\newtheorem{theorem}{Theorem}
\newtheorem{lemma}[theorem]{Lemma}
\newtheorem{corollary}[theorem]{Corollary}
\newtheorem{proposition}[theorem]{Proposition}
\newtheorem{definition}[theorem]{Definition}
\newtheorem{example}[theorem]{Example}
\numberwithin{theorem}{section}
\numberwithin{equation}{section}
\newcommand{\B}{\mathbb{B}}
\newcommand{\R}{\mathbb{R}}
\newcommand{\C}{\mathbb{C}}
\newcommand{\Cc}{\mathcal{C}}
\newcommand{\al}{\alpha}
\newcommand{\ep}{\epsilon}
\newcommand{\Om}{\Omega}
\newcommand{\Omf}{\partial\Omega}
\newcommand{\Omb}{\bar{\Omega}}
\newcommand{\fami}{\mathcal{V}_m(\Om,\fii,f)}
\newcommand{\fii}{\varphi}
\newcommand{\D}{\Delta_H }
\newcommand{\U}{\mathtt{U}}
\begin{document}
\title[ complex Hessian equations]{Modulus of continuity of  solutions to  complex Hessian equations}
\keywords{complex Hessian equation, m-subharmonic function, Dirichlet problem, modulus of continuity}
\author{Mohamad CHARABATI}

\date{\today}

\begin{abstract}
We give a sharp estimate of the modulus of continuity of the solution to the Dirichlet problem for the complex Hessian equation of order $m$ ($1 \leq m \leq n$) with a continuous right hand side  and a continuous boundary data in a bounded strongly $m$-pseudoconvex domain  $\Om \Subset \C^n$. Moreover when the right hand side is in $L^p(\Om) $, for some $p > n/m$ and the boundary value function  is $C^{1,1}$ we prove that the solution is H\"older continuous.
\end{abstract}

\maketitle

\section{introduction}

Let $\Om$ be a  bounded  domain in $\C^n$ with smooth boundary and $ m$ be an integer such that $ 1 \leq m \leq n$. Given $\fii\in\Cc(\Omf) $ and $ 0\leq  f\in\Cc(\Omb)  $.
We consider the Dirichlet problem for complex Hessian equation:

\begin{equation}
\label{hess}
\begin{cases}
	u \in SH_m(\Om) \cap \Cc(\Omb) \\
 	(dd^cu)^m\wedge \beta^{n-m}
	= f \beta^n & \text{ in } \;\; \Om \\
 	u= \fii & \text{ on } \;\; \Omf 
\end{cases}
\end{equation}
where $ SH_m(\Om)$ denote the set of all $ m$-subharmonic functions in $\Om$  and  $ \beta := dd^c |z|^2 $ is the standard K\"ahler form in $\C^n$.

\smallskip

In the  case $ m=1$, this equation corresponds to the Poisson equation which is classical. The case $m=n$  corresponds to the complex Monge-Amp\`ere equation which was intensively studied these last decades  by several authors (see \cite{BT76}, \cite{CP92}, \cite{CK94}, \cite{Bl96} \cite{Ko98}, \cite{GKZ08}).

The complex Hessian equation is a new subject and is much more difficult to handle than the complex Monge-Amp\`ere equation (e.g. the $m$-subharmonic functions are not invariant under holomorphic change of variables, for $ m < n$).
Despite that, the pluripotential theory which was developed in (\cite{BT82} , \cite{D89}, \cite{Ko98}) for the complex Monge-Amp\`ere equation can be adapted to the complex Hessian equation.

\smallskip

The Dirichlet problem (\ref{hess}) was considered by Li in \cite{Li04}. He proved that if $ \Om$ is a bounded strongly $m$-pseudoconvex domain with smooth boundary (see the definition below), $\fii \in \Cc^\infty(\Omf)$ and $ 0< f \in \Cc^\infty(\Omb) $ then there exists a unique smooth solution to the Dirichlet problem (\ref{hess}).

\noindent
The existence of continuous solution for the homogenous Dirichlet problem in the unit ball was proved by Z. Blocki \cite{Bl05}.

The H\"older continuity of the solution when the right hand side and the boundary data are H\"older continuous was proved by H.C. Lu \cite{Lu12}.

\noindent
Recently, S. Dinew and S. Kolodziej proved in \cite{DK11} that there exists a unique continuous  solution to (\ref{hess}) when $ f \in L^p (\Om)$ , $p > n/m$. The H\"older continuity of the solution in this case has been studied independently by H.C. Lu \cite{Lu12} and Nguyen \cite{N13}.

\noindent 
A viscosity approach to the complex Hessian equation has been developed by H.C. Lu in  \cite{Lu13b}.

\noindent
A potential theory for the complex Hessian equation was developed by Sadullaev and Abdullaev in \cite{SA12} and H.C.Lu \cite{Lu12} at the same time.

\smallskip

Our first main result in this paper gives a sharp estimate for the modulus of continuity of the solution to Dirichlet problem for  complex Hessian equation (\ref{hess}).

More precisely, we will prove the following result.

\vskip.2cm

\begin{theorem}\label{main}
Let $ \Om   $ be a smoothly bounded strongly $m$-pseudoconvex domain in $\C^n$, assume  that $ 0 \leq f \in \Cc(\Omb)$ and $ \fii \in \Cc(\Omf)$. Then the modulus of continuity $\omega_\U$ of the solution $\U$  satisfies  the following estimate
   $$ \omega_\U (t) \leq   \tau (1+ \|f\|^{1/m}_{L^\infty (\Omb)}) \;  \max \{ \omega_\fii(t^{1/2}), \omega_{f^{1/m}}(t), t^{1/2} \}  $$
where $ \tau \geq 1 $ is a constant  depending only on $ \Om$.

\end{theorem}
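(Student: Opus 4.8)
The plan is to obtain the estimate by a barrier construction combined with the standard stability/comparison machinery for $m$-subharmonic functions. First I would reduce the boundary modulus of continuity to an interior one: using that $\Om$ is smoothly bounded and strongly $m$-pseudoconvex, there is a global $m$-subharmonic defining-type exhaustion $\rho$ with $dd^c\rho \wedge \beta^{n-1} \geq \beta^{n}/C$ and $\rho = 0$ on $\Omf$, $\rho < 0$ inside. For a boundary point $\zeta$ and the continuous data $\fii$, one builds subsolutions and supersolutions of the form $\fii(\zeta) \pm \bigl(\omega_\fii(|z-\zeta|^{1/?}) + A\,\rho(z)\bigr)$; the precise exponent is chosen so that, when one applies $(dd^c\cdot)^m\wedge\beta^{n-m}$, the term $A\,\rho$ dominates the (possibly non-$m$-sh) perturbation coming from $\omega_\fii$. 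Comparing with $\U$ via the comparison principle for the complex Hessian operator yields, for $z \in \Om$ and $\zeta \in \Omf$,
\[
  |\U(z) - \fii(\zeta)| \;\leq\; \tau\,(1+\|f\|_{L^\infty}^{1/m})\,\max\{\omega_\fii(|z-\zeta|^{1/2}),\, |z-\zeta|^{1/2}\},
\]
which in particular controls $\U$ near $\Omf$ and gives the right $t^{1/2}$-type scaling for the boundary contribution (the square root is the familiar loss for the Laplacian-type barrier, and it is what forces $\omega_\fii(t^{1/2})$ in the statement).

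Next I would handle interior regularity by a translation argument. For a small vector $h \in \C^n$ consider $\U_h(z) := \U(z+h)$ on $\Om_h := \Om - h$; it solves the Hessian equation with right-hand side $f(\cdot + h)$. On the overlap $\Om \cap \Om_h$ one wants to compare $\U$ and $\U_h$. The difference of the right-hand sides is controlled by $\omega_f(|h|)$, and since $f^{1/m}$ is what appears naturally (the operator is homogeneous of degree $m$ in $dd^c u$, so $(f^{1/m})$ plays the role of a "coefficient"), one gets a perturbation of size $\omega_{f^{1/m}}(|h|)$ after taking $m$-th roots; the boundary mismatch between $\U|_{\Omf_h}$ and $\U_h$ is controlled by the boundary estimate from the previous paragraph applied with displacement $|h|$, i.e.\ by $\omega_\U^{\partial}(|h|) \lesssim \omega_\fii(|h|^{1/2}) + |h|^{1/2}$. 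Feeding both into the comparison principle — after correcting $\U_h$ by an $m$-sh barrier $B$ with $(dd^cB)^m\wedge\beta^{n-m} \geq \omega_{f^{1/m}}(|h|)^m \beta^n$ of sup-norm $\lesssim \omega_{f^{1/m}}(|h|)$, plus a constant absorbing the boundary mismatch — gives
\[
  \sup_{\Om\cap\Om_h} (\U - \U_h) \;\leq\; \tau\,(1+\|f\|_{L^\infty}^{1/m})\,\max\{\omega_\fii(|h|^{1/2}),\, \omega_{f^{1/m}}(|h|),\, |h|^{1/2}\},
\]
and symmetrically with $\U$ and $\U_h$ swapped, which is exactly $\omega_\U(|h|)$ up to the constant $\tau$. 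Setting $t = |h|$ finishes the proof.

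The main obstacle I anticipate is the barrier/comparison step near the boundary: $m$-subharmonicity is not preserved under the naive perturbations one writes down (unlike plurisubharmonicity in some classical arguments, and certainly unlike the subharmonic $m=1$ case), so one must check carefully that $A\,\rho$ — with $A$ proportional to $(1+\|f\|_{L^\infty}^{1/m})$ and with the exploited lower bound $dd^c\rho\wedge\beta^{n-1}\gtrsim\beta^n$ — genuinely forces the Hessian inequality $(dd^c(\cdot))^m\wedge\beta^{n-m}\geq f\beta^n$ (resp.\ $\leq$) for the sub- (resp.\ super-) barrier, using the concavity of $\det^{1/m}$ on the $m$-positive cone (Gårding's inequality / the Maclaurin-type inequalities for mixed Hessians). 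A secondary technical point is the passage from $\omega_f$ to $\omega_{f^{1/m}}$ and making sure the constant stays of the claimed form $\tau(1+\|f\|_{L^\infty}^{1/m})$ with $\tau$ depending only on $\Om$; this is where one uses that $x\mapsto x^{1/m}$ is concave so $\omega_{f^{1/m}}$ is well behaved, together with homogeneity of the operator to scale $f$ to $\|f\|_{L^\infty}\leq 1$ at the cost of the stated factor. The existence, uniqueness and continuity of $\U$ itself I take for granted from \cite{DK11}.
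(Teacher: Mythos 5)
Your overall architecture is the same as the paper's (a boundary barrier producing the $t^{1/2}$ loss, then a translation argument in which a correction of size $\max\{\omega_\fii(|h|^{1/2}),\omega_{f^{1/m}}(|h|),|h|^{1/2}\}$ absorbs both the boundary mismatch and the change in the density), and your second step is sound: comparing $\U$ and $\U(\cdot+h)$ on $\Om\cap\Om_h$ with a correction term like $g(|h|)(|z-z_0|^2-d^2-1)$ is exactly what the paper does (it glues by a max and uses the linearized operators $L_\al$ from Proposition \ref{equiv} to make the additive $f^{1/m}$ bookkeeping trivial; your route via G{\aa}rding superadditivity of $(\,\cdot\,)^{1/m}$ on mixed Hessians is an acceptable substitute).

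The genuine gap is in the boundary barrier. A barrier of the form $\fii(\zeta)-\omega_\fii(|z-\zeta|^{s})+A\rho(z)$ cannot be made $m$-subharmonic by ``choosing the exponent so that $A\rho$ dominates'': the perturbation $-\omega_\fii(|z-\zeta|^{s})$ is a radial, decreasing, nonconstant function of $|z-\zeta|$, hence it is not even subharmonic near $\zeta$ for any choice of $s$, and its negative curvature is unbounded there (already for $\omega_\fii(t)=t^\alpha$ the Laplacian behaves like $-c\,|z-\zeta|^{s\alpha-2}\to-\infty$), whereas $A\,dd^c\rho$ is bounded; so no constant $A$ depending only on $\Om$ and $\|f\|_{L^\infty}^{1/m}$ can restore $m$-subharmonicity, and for a general modulus of continuity the perturbation need not even have a measurable Hessian to estimate. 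The missing idea --- which is the heart of the paper's Proposition \ref{subs} --- is to pass to the minimal concave majorant $\bar{\omega}_\fii$ (Lemma \ref{mod}) and to build the barrier as a convex increasing function of an $m$-sh function: with $g(z)=B\rho(z)-|z-\zeta|^2$ ($m$-sh for $B$ large, by strong $m$-pseudoconvexity) and $\chi(t)=-\bar{\omega}_\fii((-t)^{1/2})$ convex nondecreasing, the function $h=\chi\circ g+\fii(\zeta)$ is automatically $m$-sh, lies below $\fii$ on $\Omf$ near $\zeta$ (after a cut-off/gluing with a constant away from $\zeta$), and has modulus of continuity $\lesssim\omega_\fii(t^{1/2})$; the term $K_1|z-z_0|^2$ with $K_1=\|f\|_{L^\infty}^{1/m}$ is then added separately to enforce the subsolution inequality, and the supersolution is obtained by the same construction applied to $-\fii$. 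Without this composition trick (or an equivalent device) your boundary estimate, and hence the whole proof, does not go through; with it, the rest of your argument matches the paper's.
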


In the case of the complex Monge-Amp\`ere equation, Y. Wang gave a control on the modulus of continuity of the solution assuming the existence of a subsolution and a supersolution with the given boundary data (\cite{W12}). 

Here we do not assume the existence of a subsolution and a supersolution. Actually the main argument in our  proof consists in constructing adequate barriers for the Dirichlet problem for the complex Hessian equation (\ref{hess}).

\bigskip
For the case when the density  $f \in L^p(\Om)$ with $ p > n/m$, C.H. Nguyen \cite{N13} proved the H\"older continuity of the solution when the boundary data is in $\Cc^{1,1}(\Omf)$ and the density $f$ satisfies a growth condition near the boundary of $\Om$.

For the case $m=n$, we proved recently (\cite{Ch14}) that the solution to the Dirichlet problem (\ref{hess}) is H\"older continuous on $\Omb$ without any condition near the boundary.
Using the same idea we can prove a similar result for the hessian equation. More precisely, we have the following theorem.  

\begin{theorem}\label{main2}
Let $\Om \subset \C^n$ be a  bounded  strongly $m$-pseudoconvex  domain with smooth boundary,   $ \fii \in \Cc^{1,1} (\Omf) $ and $ 0 \leq f \in L^p(\Om)$, for some $ p > n/m $. Then the solution $\U$ to (\ref{hess}) belongs to $  \Cc^{0,\al}(\Omb)$ for any $ 0<\al < \gamma_1 $. Moreover, if $p \geq 2 n/m$ then the solution to the Dirichlet problem  $ \U \in  \Cc^{0,\al}(\Omb)$ for any $ 0<\al <  \min \{ \frac{1}{2}, 2 \gamma_1 \} $,  where $ \gamma_r:=   \frac{r}{r+mq+\frac{pq(n-m)}{p- \frac{n}{m}}} $ and $r \geq 1$.
\end{theorem}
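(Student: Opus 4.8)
The plan is to follow the strategy that worked for the complex Monge-Amp\`ere case (\cite{Ch14}), adapting each step to the $m$-Hessian setting. The skeleton is: (i) reduce to an interior stability/regularization estimate; (ii) handle the boundary via a barrier built from Theorem \ref{main}; (iii) glue the two estimates together with a maximum principle.

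\textbf{Step 1: Boundary regularity.} First I would apply Theorem \ref{main} to locate $\U$ near $\Omf$. Since $\fii \in \Cc^{1,1}(\Omf)$, its modulus of continuity satisfies $\omega_\fii(t) \lesssim t$, hence $\omega_\fii(t^{1/2}) \lesssim t^{1/2}$. The density $f$ is only in $L^p$, not continuous, so one cannot quote Theorem \ref{main} verbatim; instead I would mimic the barrier construction there but replace the $L^\infty$-bound on $f$ by the $L^p$-estimate of Ko\l odziej type (the $L^\infty$ a priori bound for $m$-subharmonic solutions with $L^p$ right-hand side, $p>n/m$, from \cite{DK11}/\cite{Lu12}), obtaining a bound of the form $|\U(z)-\fii(\xi)| \le C\,\mathrm{dist}(z,\Omf)^{\al_0}$ for $z$ near the boundary and $\xi$ the nearest boundary point, for a suitable exponent $\al_0$. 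This uses strong $m$-pseudoconvexity to get a defining function that is a strict $m$-subharmonic subsolution, and the $\Cc^{1,1}$ boundary data to get a plurisuperharmonic (hence $m$-superharmonic) majorant with the right boundary values.

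\textbf{Step 2: Interior estimate via stability and convolution.} Following \cite{Ch14}, for small $\delta>0$ I would compare $\U$ with a sup-convolution / standard mollification $\U_\delta$ on $\Om_\delta := \{z : \mathrm{dist}(z,\Omf)>\delta\}$. The mollified function is $m$-subharmonic on $\Om_\delta$, and one estimates $\mathrm{Hess}_m(\U_\delta)$ from $f$ using Jensen's inequality and the $L^p$ control; then a stability estimate for the Hessian equation (the analogue of Ko\l odziej's stability theorem, available in \cite{DK11}/\cite{Lu12}/\cite{N13}) controls $\sup_{\Om_\delta}(\U_\delta - \U)$ by a power of $\|(\U_\delta-\U)\|_{?}$ combined with a power of $\delta$. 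The outcome should be $\sup_{\Om_\delta}(\U_\delta-\U) \le C\,\delta^{2\gamma_1}$ (or $\delta^{\gamma_1}$ before the bootstrapping that uses $p\ge 2n/m$), matching the exponents $\gamma_r$ in the statement, which are exactly of the shape produced by balancing the convolution error $\delta$ against the stability exponent involving $q$, $p$, $m$, $n$.

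\textbf{Step 3: Gluing.} Extend $\U$ past $\Omf$ using the boundary estimate from Step 1 so that $\U_\delta$ makes sense on all of $\Om$ up to an error $O(\delta^{\al_0})$ coming from the boundary layer $\Om\setminus\Om_\delta$; then on $\Om_\delta$ use Step 2, and apply the comparison/maximum principle to conclude $\U_\delta - \U \le C\delta^{\min\{\al_0, 2\gamma_1\}}$ on all of $\Om$. A standard lemma (e.g. the one relating $\sup(\U_\delta-\U)=O(\delta^\al)$ to $\U\in\Cc^{0,\al}$) then gives H\"older continuity of $\U$ in the interior, and together with the boundary H\"older estimate of Step 1 this yields $\U\in\Cc^{0,\al}(\Omb)$ for the claimed range of $\al$. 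The improvement to $\al<\min\{\tfrac12,2\gamma_1\}$ when $p\ge 2n/m$ comes from iterating the stability estimate once (a bootstrap), using that the first-round H\"older exponent feeds back into a better bound on $\U_\delta-\U$.

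\textbf{Main obstacle.} The delicate point is Step 2: making the stability estimate for the $m$-Hessian operator interact cleanly with the mollification, since $m$-subharmonicity is \emph{not} preserved under holomorphic changes of coordinates and the Hessian measure is not as flexible as the Monge-Amp\`ere measure. One must be careful that $\U_\delta$ genuinely belongs to $SH_m$ on $\Om_\delta$ and that the comparison of Hessian measures $\mathrm{Hess}_m(\U_\delta)\le (\text{mollified }f + \text{error})\,\beta^n$ holds with explicit constants; controlling the error term near $\Omf$, where $f\in L^p$ may blow up, without any growth hypothesis on $f$ (this is the new feature compared to \cite{N13}) is where the argument from \cite{Ch14} must be imported most carefully, and it is the reason the $\Cc^{1,1}$ hypothesis on $\fii$ (rather than merely H\"older) is needed — it provides a clean quadratic barrier that absorbs the boundary behaviour of $f$.
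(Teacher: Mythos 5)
Your overall GKZ-stability philosophy is indeed the one underlying the paper, but the paper does not redo the mollification-plus-stability machinery: it invokes Nguyen's result (Theorem \ref{Lp}) as a black box, and that theorem requires two inputs which your proposal never secures. The first is the finite total mass of $\Delta \U$ in $\Om$ (or $\nabla\U\in L^2(\Om)$). This is exactly what allows one to bound the $L^r$-norm of $(\U_\delta-\U)_+$ by a power of $\delta$ before the stability estimate is applied --- the norm you left as $\|\cdot\|_{?}$ in Step 2 --- and it is not free. The paper proves it by applying the Cegrell-type energy inequality (Corollary \ref{finite lap}) to the solution $\U_0$ with zero boundary data together with the defining function $\rho$ of the strongly $m$-pseudoconvex domain, and then transferring the bound to $\U$ via the subsolution $A\rho+\tilde{\fii}+\U_0$ and Lemma \ref{comparison}. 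Your Step 2 silently assumes this control, and this is half of the paper's proof.

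The second, larger gap is the H\"older barrier $v\le \U$ with $v=\U$ on $\Omf$ and the correct exponent. Your Step 1 asserts a boundary estimate coming from a ``quadratic barrier'' furnished by the $\Cc^{1,1}$ data which ``absorbs the boundary behaviour of $f$''; this fails, because when $f\in L^p$ is unbounded near $\Omf$ no function of the form $A\rho+\tilde{\fii}$ satisfies $(dd^c\,\cdot\,)^m\wedge\beta^{n-m}\ge f\beta^n$ near the boundary --- removing the growth hypothesis of \cite{N13} is precisely the difficulty the paper addresses, and it does so by constructing the barrier differently. For $p>n/m$ it solves the Hessian equation in a larger ball with $f$ extended by zero (so the density is bounded near the ball's boundary and Nguyen's result applies there with exponent $<2\gamma_1$), then corrects the boundary values by a homogeneous solution in $\Om$, which by Theorem \ref{main} is H\"older with half that exponent; the sum is the barrier of exponent $<\gamma_1$ (Proposition \ref{barrier2}). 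For $p\ge 2n/m$ it solves the \emph{real} Monge--Amp\`ere equation $Mu=\tilde f^{2n/m}\,dV_{2n}$ in a big ball by Rauch--Taylor (the hypothesis $p\ge 2n/m$ enters exactly to make $f^{2n/m}$ integrable), uses Proposition \ref{real} to get $(dd^cu)^m\wedge\beta^{n-m}\ge f\beta^n$ from the convex Lipschitz solution $u$, and adds the $1/2$-H\"older homogeneous solution, yielding the $1/2$-H\"older barrier of Theorem \ref{barrier}. Your proposed ``bootstrap of the stability estimate'' is not the mechanism behind the improved exponent $\min\{1/2,2\gamma_1\}$: iterating stability gives no route to the exponent $1/2$, and your sketch never explains where the threshold $p\ge 2n/m$ would enter. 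As written, Steps 1 and 2 therefore both rest on claims that would not survive an attempt to make them precise.
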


\vskip.2cm

\section{Preliminaries}

In this section, we briefly recall some facts from linear algebra and basic results from potential theory for $m$-subharmonic functions. We refer the reader to \cite{Bl05, DK11, Lu13a, Lu13c, SA12} for more details.

\noindent
Let us set
$$ H_m (\lambda) = \sum\limits_{ 1 \leq j_1 < ...< j_m \leq n} \lambda_{j_1} ...\lambda_{j_m}, $$ 
where $ \lambda = (\lambda_1, ..., \lambda_n) \in \R^n$.\\
Thus $ (t+\lambda_1) ...(t+\lambda_n) = \sum\limits_{m=0}^{n} H_m(\lambda) t^{n-m} $ for $ t \in \R$, where $ H_0(\lambda)=1$.

\noindent 
We denote $\Gamma_m$ the closure of the connected component of $\{ H_m>0\}$ containing $ (1,1,...,1)$. One can show that 
$$ \Gamma_m = \{ \lambda \in \R^n : H_m(\lambda_1+t, ..., \lambda_n+t) \geq 0 , \; \forall t \geq 0\}. $$
It follows from the identity 
$$ H_m(\lambda_1+t, ..., \lambda_n+t) = \sum\limits_{p=0}^{m} \binom{n-p}{m-p} H_p (\lambda) t^{m-p} ,$$ 
that 
$$ \Gamma_m = \{ \lambda \in \R^n : H_j(\lambda) \geq 0, \; \forall 1 \leq j \leq m \}.$$
It is clear that $ \Gamma_n \subset \Gamma_{n-1} \subset ... \subset \Gamma_1$, where $  \Gamma_n = \{ \lambda \in \R^n : \lambda_i \geq 0 \; \forall i \} $.

By the paper of G{\aa}rding \cite{G59}, the set $ \Gamma_m$ is a convex cone in $\R^n$ and $ H_m^{1/m} $ concave on $\Gamma_m$. By Maclaurin inequality, we get 
$$ \binom {n}{m}^{-1/m} H_m^{1/m} \leq \binom{n}{p}^{-1/p} H_p^{1/p} ; \;\; 1 \leq p \leq m \leq n .$$

Let  $\mathcal{H}$ denote the vector space over $\R$ of complex hermitian $n\times n $ matrices. For any $A \in \mathcal{H}$, let $ \lambda (A) = (\lambda_1,..., \lambda_n) \in \R^n$ be the eigenvalues of $A$. We set
$$ \tilde{H}_m(A)= H_m(\lambda(A)).$$
Now, we define the cone 
$$ \tilde{\Gamma}_m := \{ A\in \mathcal{H} : \lambda(A) \in \Gamma_m \} =\{ A \in \mathcal{H} : \tilde{H}_j (A) \geq 0 , \; \forall 1\leq j \leq m \}.$$

Let $\alpha$ be a real (1,1)-form determined by 
$$ \alpha = \frac{i}{2}\sum\limits_{i,j} a_{i\bar{j}} dz_i \wedge d\bar{z}_j $$
where $(a_{i\bar{j}})$ is a hermitian matrix. After diagonalizing the matrix $ A= (a_{i\bar{j}})$, we see that 
$$ \alpha^m \wedge \beta^{n-m} = \tilde{S}_m(\alpha) \beta^n ,$$
where $\beta$ is the standard K\"ahler form in $\C^n$ and $ \tilde{S}_m(\alpha)= m! (n-m)!  \tilde{H}_m (A) $.

\noindent
The last equality allows us to define 
$$ \hat{\Gamma}_m := \{ \alpha \in \C_{(1,1)} : \alpha \wedge \beta^{n-1} \geq 0, \alpha^2 \wedge \beta^{n-2} \geq 0, ..., \alpha^m \wedge \beta^{n-m} \geq 0 \} .$$
where $\C_{(1,1)}$ is the space of real (1,1)-forms with constant coefficients.

\vskip.2cm
Let $M: \C_{(1,1)}^m \to \R $ be the polarized form of $\tilde{S}_m$, i.e. $M$ is linear in every variable, symmetric and $ M(\al,...,\al)= \tilde{S}_m(\al), \; \text{ for any } \al \in \C_{(1,1)}.$
\\
The G{\aa}rding  inequality  (see \cite{G59}) asserts that
\begin{equation}\label{gar}
M(\al_1, \al_2,...,\al_m) \geq \tilde{S}_m(\al_1)^{1/m} ...  \tilde{S}_m(\al_m)^{1/m} , \;\; \al_1,\al_2,...,\al_m \in \hat{\Gamma}_m .
\end{equation}

\begin{proposition}(\cite{Bl05}).
If $\alpha_1 , ..., \alpha_p \in \hat{\Gamma}_m$, $1\leq p \leq m$, then we have
$$\alpha_1 \wedge \alpha_2 \wedge ...\wedge \alpha_p \wedge \beta^{n-m} \geq 0 . $$ 
\end{proposition}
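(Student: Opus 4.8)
The plan is to reduce the statement to the Gårding inequality \eqref{gar} by an approximation/polarization argument. The issue is that \eqref{gar} is stated for forms in $\hat{\Gamma}_m$ with the full symmetric product $M(\alpha_1,\dots,\alpha_m)$, whereas here we have only $p \leq m$ forms wedged with $\beta^{n-m}$. The natural trick is to pad out the product with copies of $\beta$: note that $\beta \in \hat{\Gamma}_m$ (indeed $\beta^j \wedge \beta^{n-j} = \binom{n}{j}^{-1}\binom{n}{n}\,\cdot\,$ a positive multiple of $\beta^n > 0$ for all $j$), so we may consider $M(\alpha_1,\dots,\alpha_p,\beta,\dots,\beta)$ with $m-p$ copies of $\beta$. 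Expanding the definition of $M$ as the polarization of $\tilde S_m$ and using linearity, one checks that
$$ M(\alpha_1,\dots,\alpha_p,\underbrace{\beta,\dots,\beta}_{m-p}) = c_{n,m,p}\,\alpha_1 \wedge \cdots \wedge \alpha_p \wedge \beta^{n-p} $$
as a multiple of $\beta^n$, for an explicit positive combinatorial constant $c_{n,m,p}$; this is the computation that identifies the mixed form $\alpha_1\wedge\cdots\wedge\alpha_p\wedge\beta^{n-m}$ (wedged with the remaining $\beta^{m-p}$) with a value of the polarized form. Then \eqref{gar} applied to the $m$-tuple $(\alpha_1,\dots,\alpha_p,\beta,\dots,\beta)$ gives
$$ M(\alpha_1,\dots,\alpha_p,\beta,\dots,\beta) \geq \tilde S_m(\alpha_1)^{1/m}\cdots \tilde S_m(\alpha_p)^{1/m}\,\tilde S_m(\beta)^{(m-p)/m} \geq 0, $$
since each $\tilde S_m(\alpha_i) \geq 0$ (as $\alpha_i \in \hat\Gamma_m$ means in particular $\alpha_i^m \wedge \beta^{n-m} \geq 0$) and $\tilde S_m(\beta) > 0$. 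Combining the two displays yields $\alpha_1\wedge\cdots\wedge\alpha_p\wedge\beta^{n-m} = c_{n,m,p}^{-1} M(\alpha_1,\dots,\alpha_p,\beta,\dots,\beta)\,\beta^{n-m}/\beta^{n-p}\cdot(\cdots) \geq 0$; more cleanly, wedging the identified form with $\beta^{m-p} \geq 0$ preserves nonnegativity, so $\alpha_1\wedge\cdots\wedge\alpha_p\wedge\beta^{n-m}\wedge\beta^{m-p} \geq 0$ forces $\alpha_1\wedge\cdots\wedge\alpha_p\wedge\beta^{n-m}\geq 0$ because a top form on $\C^n$ of the shape (nonnegative $(p,p)$-form)$\wedge\,\beta^{n-p}$ is determined pointwise by a single nonnegative number.

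The main obstacle is purely bookkeeping: getting the combinatorial constant $c_{n,m,p}$ right and justifying that a real $(p,p)$-form $\gamma$ with $\gamma\wedge\beta^{n-p}\geq 0$ is "nonnegative" in the sense intended here — i.e., that the sign of $\gamma\wedge\beta^{n-p}$ as a multiple of $\beta^n$ is what the proposition asserts. Since all coefficients are constant, one can diagonalize simultaneously (the forms $\alpha_i$ need not commute as Hermitian matrices, but after fixing coordinates the wedge product is computed by the mixed discriminant/permanent-type formula $\alpha_1\wedge\cdots\wedge\alpha_p\wedge\beta^{n-p} = \frac{(n-p)!\,p!}{n!}\,(\text{mixed }S_p\text{ of the }\alpha_i)\,\beta^n$), which reduces everything to the finite-dimensional Gårding theory already quoted. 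So the only real content is recognizing that wedging with $\beta$ is exactly the operation that lowers $m$ to $p$ in the polarized form, after which \eqref{gar} does the work; the rest is linear algebra with binomial coefficients that I would not write out in full.
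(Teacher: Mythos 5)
Your reduction to G{\aa}rding's inequality \eqref{gar} by padding with $m-p$ copies of $\beta$ proves strictly less than the proposition asserts, and the final step does not close the gap. For $p<m$ the form $\gamma:=\al_1\wedge\cdots\wedge\al_p\wedge\beta^{n-m}$ has bidegree $(n-m+p,n-m+p)$, so ``$\gamma\geq 0$'' means positivity as a form: the pairing of $\gamma$ with \emph{every} simple positive $(m-p,m-p)$-form $i\sigma_1\wedge\bar{\sigma}_1\wedge\cdots\wedge i\sigma_{m-p}\wedge\bar{\sigma}_{m-p}$ must be nonnegative. What your computation yields, after identifying $M(\al_1,\dots,\al_p,\beta,\dots,\beta)$ with a positive multiple of $\al_1\wedge\cdots\wedge\al_p\wedge\beta^{n-p}$, is only the single pairing against $\beta^{m-p}$, i.e.\ a trace-type quantity. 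Your closing claim --- that nonnegativity of this top form ``forces'' $\gamma\geq0$ because a top form of the shape (nonnegative $(p,p)$-form)$\wedge\beta^{n-p}$ is determined by one nonnegative number --- begs the question: it presupposes that $\al_1\wedge\cdots\wedge\al_p$ is already a positive form, which is exactly what is to be proved. In general a real $(q,q)$-form whose wedge with $\beta^{n-q}$ is nonnegative need not be positive (just as a Hermitian matrix with nonnegative trace need not be positive semidefinite), so this is a genuine logical gap, not bookkeeping. For $p=m$ your argument is simply \eqref{gar} and is fine.

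The repair is to pad with arbitrary simple positive forms rather than with $\beta$: for constant $(1,0)$-forms $\sigma_1,\dots,\sigma_{m-p}$, each $i\sigma_j\wedge\bar{\sigma}_j$ is a positive semidefinite constant $(1,1)$-form, hence belongs to $\hat{\Gamma}_n\subset\hat{\Gamma}_m$, and the polarization identity used in the proof of Lemma \ref{form} gives
$$ \al_1\wedge\cdots\wedge\al_p\wedge i\sigma_1\wedge\bar{\sigma}_1\wedge\cdots\wedge i\sigma_{m-p}\wedge\bar{\sigma}_{m-p}\wedge\beta^{n-m} \;=\; M\bigl(\al_1,\dots,\al_p,\,i\sigma_1\wedge\bar{\sigma}_1,\dots,i\sigma_{m-p}\wedge\bar{\sigma}_{m-p}\bigr)\,\beta^n \;\geq\;0, $$
by \eqref{gar}, since all arguments lie in $\hat{\Gamma}_m$ and all the factors $\tilde{S}_m(\cdot)^{1/m}$ on the right-hand side are nonnegative there. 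As every simple positive $(m-p,m-p)$-form with constant coefficients is of this shape, this is precisely the asserted positivity of $\al_1\wedge\cdots\wedge\al_p\wedge\beta^{n-m}$. Note that the paper gives no proof of this proposition and simply cites \cite{Bl05}; the argument just sketched is the standard one, and your $\beta$-padding corresponds to summing these pairings over an orthonormal frame, which recovers only the trace information.
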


\noindent
Let us set 
$$ \Sigma_m:= \{ \alpha \in \hat{\Gamma}_m   \text{ of constant coefficients such that } \tilde{S}_m(\alpha)=1\}.$$
Recall the following elementary lemma and we include its proof for the convenience of the reader.

\begin{lemma}\label{form}
Let $\al \in \hat{\Gamma}_m$. Then the following identity holds 
$$ \tilde{S}_m(\al)^{1/m} = \inf \left\{ \frac{\al \wedge \al_1 \wedge ... \wedge \al_{m-1} \wedge \beta^{n-m} }{\beta^n} ; \al_i \in \Sigma_m , \forall i \right\}.$$
\end{lemma}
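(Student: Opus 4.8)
The plan is to prove the identity by establishing the two inequalities separately, exploiting the G\aa rding inequality \eqref{gar} for the lower bound and a judicious choice of test forms for the upper bound. Throughout, write $I(\al)$ for the infimum on the right-hand side.

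\medskip

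\emph{Step 1: the inequality $\tilde S_m(\al)^{1/m} \le I(\al)$.} Fix any $\al_1,\dots,\al_{m-1}\in\Sigma_m$. Since $\al,\al_1,\dots,\al_{m-1}\in\hat\Gamma_m$, the G\aa rding inequality \eqref{gar} applied to the polarized form gives
$$ \frac{\al\wedge\al_1\wedge\cdots\wedge\al_{m-1}\wedge\beta^{n-m}}{\beta^n} = M(\al,\al_1,\dots,\al_{m-1}) \ge \tilde S_m(\al)^{1/m}\,\tilde S_m(\al_1)^{1/m}\cdots\tilde S_m(\al_{m-1})^{1/m} = \tilde S_m(\al)^{1/m}, $$
because $\tilde S_m(\al_i)=1$ for each $i$ by the definition of $\Sigma_m$. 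Taking the infimum over all admissible choices of $\al_1,\dots,\al_{m-1}$ yields $I(\al)\ge \tilde S_m(\al)^{1/m}$.

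\medskip

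\emph{Step 2: the inequality $I(\al)\le \tilde S_m(\al)^{1/m}$.} Here one must exhibit a choice (or a limiting sequence) of $\al_i\in\Sigma_m$ achieving equality. The natural candidate is $\al_1=\cdots=\al_{m-1}=\al/\tilde S_m(\al)^{1/m}$, which lies in $\Sigma_m$ and gives $M(\al,\al_1,\dots,\al_{m-1}) = \tilde S_m(\al)^{1/m}$ exactly — provided $\tilde S_m(\al)>0$. This settles the case $\al$ in the interior of $\hat\Gamma_m$. The remaining case is $\tilde S_m(\al)=0$; then one must show $I(\al)=0$, i.e. find $\al_i\in\Sigma_m$ making $\al\wedge\al_1\wedge\cdots\wedge\al_{m-1}\wedge\beta^{n-m}/\beta^n$ arbitrarily small. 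I would first reduce to a diagonal situation: after a unitary change of coordinates $\al=\frac{i}{2}\sum_j \lambda_j\, dz_j\wedge d\bar z_j$ with $\lambda=(\lambda_1,\dots,\lambda_n)\in\Gamma_m$ and $H_m(\lambda)=0$. Pick an index, say with $\lambda_n$ minimal, and build diagonal test forms concentrated so as to pair $\al$ against the directions where its "mass" is smallest; concretely, take $\al_i$ diagonal with a large weight $t$ on $n-m+1$ chosen coordinates and a small weight on the rest, normalized so that $\tilde S_m(\al_i)=1$, and let $t\to\infty$. A direct computation with elementary symmetric polynomials then shows the relevant wedge quotient tends to $0$. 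Alternatively, and perhaps more cleanly, one can use a continuity/approximation argument: for $\ep>0$ the form $\al+\ep\beta$ has $\tilde S_m(\al+\ep\beta)>0$, so $I(\al+\ep\beta)=\tilde S_m(\al+\ep\beta)^{1/m}$ by the interior case; since $\al\mapsto\al\wedge\al_1\wedge\cdots\wedge\al_{m-1}\wedge\beta^{n-m}/\beta^n$ is continuous and monotone in $\al$ (using the Proposition above, wedging with the nonnegative forms $\al_i$ and $\beta$ preserves inequalities), one gets $I(\al)\le I(\al+\ep\beta)=\tilde S_m(\al+\ep\beta)^{1/m}\to \tilde S_m(\al)^{1/m}$ as $\ep\to 0$.

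\medskip

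\emph{Main obstacle.} The only genuinely delicate point is the boundary case $\tilde S_m(\al)=0$ in Step 2: when $\al$ is degenerate the "optimal" test form $\al/\tilde S_m(\al)^{1/m}$ is undefined, so one needs either the explicit diagonal construction with a limit or the $\ep\beta$-perturbation trick. The perturbation argument is the safer route, but it requires knowing that the map $\al\mapsto I(\al)$ is upper semicontinuous and monotone under addition of nonnegative $(1,1)$-forms, which follows from the Proposition on wedge-positivity; I would state that monotonicity explicitly before invoking it. Everything else is a direct application of G\aa rding's inequality together with unpacking the definitions of $\Sigma_m$ and $\hat\Gamma_m$.
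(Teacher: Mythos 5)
Your proof is correct and follows essentially the same route as the paper: the lower bound comes from G{\aa}rding's inequality applied to the polarized form $M$ with $\tilde{S}_m(\al_i)=1$, and the upper bound from testing with $\al_1=\cdots=\al_{m-1}=\al/\tilde{S}_m(\al)^{1/m}$. Your extra treatment of the degenerate case $\tilde{S}_m(\al)=0$ via the perturbation $\al+\ep\beta$ (using wedge-positivity for monotonicity and continuity of $\tilde{S}_m$) is sound and in fact fills a point the paper's proof passes over silently.
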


\begin{proof}
Let $M$ be a polarized form of $\tilde{S}_m $ define by 
$$M(\al, \al_1, ..., \al_{m-1}) = \frac{\al \wedge \al_1 \wedge ... \wedge \al_{m-1} \wedge \beta^{n-m}}{\beta^n} ,  $$
for $\al_1,...,\al_{m-1} \in \Sigma_m , \al \in \hat{\Gamma}$. By Garding inequality \ref{gar}, we have
$$ M(\al, \al_1, ..., \al_{m-1}) \geq \tilde{S}_m(\al)^{1/m}. $$
Then we obtain that 
$$ \tilde{S}_m(\al)^{1/m} \leq \inf \left\{ \frac{\al \wedge \al_1 \wedge ... \wedge \al_{m-1} \wedge \beta^{n-m} }{\beta^n} ; \al_i \in \Sigma_m , \forall i \right\}.$$
Now, setting $ \al_1 = ...= \al_{m-1} = \frac{\al}{\tilde{S}_m(\al)^{1/m}}$, we can ensure that
$$ M(\al, \al_1, ..., \al_{m-1}) = \tilde{S}_m(\al)^{1/m} .$$
This complete the proof of lemma.
\end{proof}

\bigskip
\it Aspects about  $m$-subharmonic functions.
\rm Let $\Om \subset \C^n$ is a bounded domain. Let also $ \beta:=dd^c|z|^2 $ is the standard K\"ahler form in $\C^n$.

\begin{definition}(\cite{Bl05}).
Let $u$ be a subharmonic function in $\Om$.\\
1) For smooth case, $u \in \Cc^2(\Om)$ is said to be $m$-subharmonic (briefly $m$-sh) if the form $dd^c u$ belongs pointwise to $\hat{\Gamma}_m$.\\
2) For non-smooth case, $u$ is called $m$-sh if for any collection $\al_1,\al_2,...,\al_{m-1} \in \hat{\Gamma}_m$ , the inequality 
$$ dd^c u \wedge \al_1 \wedge ...\wedge \al_{m-1} \wedge \beta^{n-m} \geq 0$$
holds in the weak sense of currents in $\Om$.
\end{definition}

We denote $SH_m(\Om)$ the set of all $m$-sh functions. We will recall some properties of $m$-sh functions.
\begin{proposition}(\cite{Bl05}).

\begin{enumerate}

\item $ PSH = SH_n  \subset  SH_{n-1} \subset ... \subset  SH_1 =SH.$
\item If $u,v \in SH_m(\Om) $ then $ \lambda u + \eta v \in SH_m(\Om)$, $ \forall \lambda,\eta \geq 0$
\item If $u \in SH_m(\Om)$ and $\gamma : \R \to \R $ is convex increasing function then $\gamma \circ u \in SH_m(\Om)$. 
\item If $u \in SH_m(\Om)$ then the standard regularizations $u_\ep = u*\rho_\ep$ are also $m$-subharmonic in $\Om_\epsilon := \{ z\in \Om | dist (z, \partial \Om) > \epsilon\}$.
\item Let $U$ ba a non empty proper open subset of $\Om$, if $u \in SH_m(\Om)$, $v \in SH_m(U) $ and $ \lim\limits_{z \to y} \sup v(z) \leq u(y) $ for every $ y \in \partial U \cap \Om$, then the function 
\begin{center}

$ w=\left\{ \begin{array}{ll}
                   max(u,v) & \text{in} \; U \\
                    u &  \text{in} \; \Om \setminus U  \\
                   \end{array} \right. $    
\end{center} 
is $m$-sh in $\Om$.
\item Let $\{u_\alpha\} \subset SH_m(\Om)$ be locally uniformly bounded from above and $u= \sup u_\alpha$. Then the upper semi-continuous regularization $u^*$ is $m$-sh and equal to $u$ almost everywhere.
\end{enumerate}
\end{proposition}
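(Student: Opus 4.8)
The plan is to check the six properties by unwinding each to the defining weak inequality $dd^c u\wedge\al_1\wedge\cdots\wedge\al_{m-1}\wedge\beta^{n-m}\ge 0$ and by using that the test forms $\al_i\in\hat\Gamma_m$ and $\beta$ have constant coefficients, so wedging with them is a fixed nonnegative linear operation on the coefficient matrix, commuting with convolution and with weak limits of currents. Two pieces of linear algebra will be used repeatedly: $\hat\Gamma_m$ is a closed convex cone with $\hat\Gamma_n\subset\cdots\subset\hat\Gamma_1$, where $\hat\Gamma_n$ is exactly the set of positive semidefinite $(1,1)$-forms and $\hat\Gamma_1$ the set of forms with nonnegative trace; in particular \emph{every} positive $(1,1)$-form, and $\beta$ itself, lies in $\hat\Gamma_m$.

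I would begin with (4), which is direct: for constant forms $\al_i\in\hat\Gamma_m$ one has $dd^c u_\ep\wedge\al_1\wedge\cdots\wedge\beta^{n-m}=\big(dd^c u\wedge\al_1\wedge\cdots\wedge\beta^{n-m}\big)*\rho_\ep\ge 0$ on $\Om_\ep$, being the convolution of a positive measure with $\rho_\ep\ge 0$; combined with the classical facts that $u_\ep$ is subharmonic and $u_\ep\downarrow u$ as $\ep\downarrow 0$ (radial mollifier), this proves (4) and supplies the smoothing used throughout. Property (2) is immediate from linearity of $dd^c$ and $SH_m\subset SH$. For (1), the smooth case is pure linear algebra: $dd^c u\in\hat\Gamma_m\subset\hat\Gamma_{m-1}$ pointwise, whence, applying the Proposition above with $m$ replaced by $m-1$ to the $m-1$ forms $\al_1,\dots,\al_{m-2},dd^c u\in\hat\Gamma_{m-1}$, one gets $dd^c u\wedge\al_1\wedge\cdots\wedge\al_{m-2}\wedge\beta^{n-m+1}\ge 0$; the endpoints $SH_n=PSH$ and $SH_1=SH$ are the extreme cases of the facts above. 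The general case of (1) then follows by regularizing and passing to the weak limit of the positive measures $dd^c u_\ep\wedge(\text{constant forms})$.

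For (3) I would use, for smooth $u$ and smooth convex increasing $\gamma$, the pointwise identity $dd^c(\gamma\circ u)=\gamma'(u)\,dd^c u+\gamma''(u)\,du\wedge d^c u$, in which $\gamma'(u)\,dd^c u\in\hat\Gamma_m$ (cone, $\gamma'\ge 0$) and $\gamma''(u)\,du\wedge d^c u$ is a nonnegative multiple of a positive $(1,1)$-form, hence also in $\hat\Gamma_m$, so the sum lies in $\hat\Gamma_m$; the general case follows by approximating $\gamma$ by smooth convex increasing $\gamma_k\downarrow\gamma$ and $u$ by $u_\ep\downarrow u$, using that a decreasing limit of $m$-sh functions is $m$-sh (the weak-limit argument of (1)). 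Max-stability of $m$-sh functions, needed in (5) and (6), is obtained the same way: write $\max(u,v)$ as the decreasing limit of a regularized maximum $M_\delta(u,v)$ and check $M_\delta(u,v)\in SH_m$ for smooth $u,v$ from the formula for $dd^c\big(M_\delta(u,v)\big)$, all of whose terms are positive forms or nonnegative multiples of $dd^c u$, $dd^c v$. With max-stability in hand, (5) is the usual gluing argument: $w$ is u.s.c. by the $\limsup$-hypothesis, and since $m$-subharmonicity is a local condition it suffices to verify the current inequality on $U$ (there $w=\max(u,v)\in SH_m(U)$) and on $\Om\setminus\bar U$ (there $w=u\in SH_m$). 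For (6), Choquet's lemma reduces to a countable subfamily, then to the increasing sequence $v_j=\max(u_1,\dots,u_j)\in SH_m(\Om)$, and one invokes that an increasing, locally uniformly bounded sequence of $m$-sh functions converges in $L^1_{loc}$, has u.s.c. regularization in $SH_m$, and agrees with its supremum off a negligible set.

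The main obstacle, and the only genuinely potential-theoretic ingredient beyond linear algebra and bookkeeping, is the stability of the current inequality under monotone limits — for decreasing sequences (used in (1), (3) and for $M_\delta$) and for locally bounded increasing sequences (used in (6)) — together with the negligible-set statement $u^*=u$ a.e. in (6). Both reduce, via the $L^1_{loc}$ convergence $u_j\to u$ coming from monotone convergence and the local bound, to the classical facts for ordinary subharmonic functions; once that convergence is available, the constant-coefficient form of the test objects makes the passage to the limit automatic.
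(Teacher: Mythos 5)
The paper itself does not prove this Proposition: it is quoted from \cite{Bl05}, so there is no internal argument to compare with. Your outline follows the standard route (constant-coefficient test forms, so that wedging commutes with convolution and with weak limits; mollification; monotone limits; Choquet's lemma), and items (1)--(4) and (6) are essentially fine as sketches, with one caveat: in (1) and (4) you switch between the pointwise definition ($dd^c u\in\hat\Gamma_m$ for smooth $u$) and the weak one (the current inequalities against all $\al_i\in\hat\Gamma_m$); their equivalence for $\Cc^2$ functions is exactly the nontrivial G{\aa}rding-duality ingredient from \cite{G59}, \cite{Bl05} and should be invoked explicitly rather than used silently when you feed the mollified $u_\ep$ back into the ``smooth case'' of (1).

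The genuine gap is in (5). The reduction ``$m$-subharmonicity is local, so it suffices to verify the current inequality on $U$ and on $\Om\setminus\bar{U}$'' is false as a principle: an upper semicontinuous function which is $m$-sh off the relatively closed set $\partial U\cap\Om$ need not be $m$-sh across it. For instance $w(z)=-|\mathrm{Re}\, z_1|$ is continuous and pluriharmonic (hence $m$-sh) off the real hypersurface $\{\mathrm{Re}\, z_1=0\}$, yet it is not even subharmonic, because $dd^c w\wedge\beta^{n-1}$ carries negative mass on that hypersurface. So in the gluing lemma the $\limsup$ hypothesis must enter beyond guaranteeing upper semicontinuity of $w$. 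The standard repair: fix constant $\al_1,\dots,\al_{m-1}\in\Sigma_m$; after a complex-linear change of coordinates $L_\al$ is the Laplacian, and every $m$-sh function is $L_\al$-subharmonic (use the decreasing smooth regularizations from your step (4)). Then check the sub-mean value inequality for $w$ with respect to $L_\al$ on small balls at every point: at points of $U$ and of $\Om\setminus\bar{U}$ because $w$ is $m$-sh in a neighbourhood; and at $y\in\partial U\cap\Om$ because $w\geq u$ on all of $\Om$ while $w(y)=u(y)$, so the sub-mean inequality for $u$ transfers to $w$. Together with upper semicontinuity this gives $L_\al w\geq 0$ in the sense of distributions for every such $\al$, and the case $L=\Delta$ gives subharmonicity, hence $w\in SH_m(\Om)$. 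Your text skips precisely the step where the structure $w\geq u$, $w=u$ on $\partial U\cap\Om$ is used, and as written the argument would ``prove'' the false local principle above.
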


\medskip

For locally bounded $m$-subharmonic functions, we can inductively define a closed nonnegative current (following Bedford and Taylor for plurisubharmonic functions).
$$ dd^c u_1 \wedge ... \wedge dd^c u_p \wedge \beta^{n-m} := dd^c(u_1 dd^c u_2 \wedge ...\wedge dd^c u_p \wedge \beta^{n-m} ), $$ 
where $u_1,u_2,...,u_p \in SH_m(\Om)\cap L_{loc}^{\infty}(\Om)$, $p\leq m$.\\
In particular, we define the nonnegative Hessian measure of $ u \in SH_m(\Om) \cap L_{loc}^{\infty}(\Om)$ to be 
$$ H_m(u):= (dd^cu)^m \wedge \beta^{n-m}. $$

Let us define the differential operator
 $ L_\al : SH_m(\Om) \cap L^{\infty}_{loc} (\Om)  \to \mathcal{D}^\prime (\Om) $
such that 
$$ dd^c u \wedge \al_1 \wedge ... \wedge \al_{m-1} \wedge \beta^{n-m} = L_\al u  \beta^n ,$$
where $ \al_1, ..., \al_{m-1} \in \Sigma_m $.
In appropriate complex coordinates this  operator is the Laplace operator. 

\begin{example}

Using Garding inequality \ref{gar}, one can note that $ L_\al (|z|^2) \geq 1 $ for any $\al_i \in \Sigma_m , 1 \leq i \leq m-1$.

\end{example}

We will prove the following essential proposition by applying ideas from the viscosity theory developed in  
\cite{EGZ11} for the complex Monge-Amp\`ere equation and extended to the complex Hessian equation by H.C.Lu (\cite{Lu12}, \cite{Lu13b}).
\begin{proposition}\label{equiv}
Let $ u \in SH_m(\Om) \cap L^{\infty}_{loc}(\Om) $ and $ 0 \leq f \in \Cc(\Omb) $. The following conditions are equivalent:\\
1) $ L_\al u \geq f^{1/m} , \forall \al_1 , ..., \al_{m-1} \in \Sigma_m$.\\
2) $ (dd^c u)^m \wedge \beta^{n-m} \geq f \beta^n $ in $ \Om$.

\end{proposition}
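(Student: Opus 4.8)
The plan is to prove the two implications separately. The direction $2)\Rightarrow 1)$ is the elementary one: assuming $(dd^cu)^m\wedge\beta^{n-m}\ge f\beta^n$, I want to deduce the pointwise lower bound $L_\al u\ge f^{1/m}$ for every choice $\al_1,\dots,\al_{m-1}\in\Sigma_m$. The natural route is to first smooth $u$ by the standard regularizations $u_\ep=u*\rho_\ep$, which remain $m$-subharmonic by Proposition~2.9(4), so that $dd^cu_\ep\in\hat\Gamma_m$ pointwise and all quantities are genuine functions. For smooth $m$-sh functions the \Holder-type inequality is exactly G\aa rding's inequality~(\ref{gar}): applying it to $\al_1=dd^cu_\ep$ and $\al_2,\dots,\al_m$ equal to the given forms in $\Sigma_m$ (which satisfy $\tilde S_m=1$), one gets
\[
L_\al u_\ep = M(dd^cu_\ep,\al_1,\dots,\al_{m-1}) \ge \tilde S_m(dd^cu_\ep)^{1/m}\cdot 1 = \Big(\frac{(dd^cu_\ep)^m\wedge\beta^{n-m}}{\beta^n}\Big)^{1/m}.
\]
Then I would let $\ep\to 0$: the right-hand side converges weakly to $((dd^cu)^m\wedge\beta^{n-m}/\beta^n)^{1/m}\ge f^{1/m}$ by continuity of the Hessian operator on locally bounded $m$-sh functions and continuity of $t\mapsto t^{1/m}$ together with the hypothesis, while the left-hand side converges weakly to $L_\al u$; since both sides are (limits of) distributions and the inequality is preserved under weak limits against nonnegative test forms, one obtains $L_\al u\ge f^{1/m}$ in $\mathcal D'(\Om)$. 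A small technical point is that $f^{1/m}$ is only continuous, but as $f\in\Cc(\Omb)$ this causes no trouble in passing to the limit.

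For the harder direction $1)\Rightarrow 2)$, the idea is to reconstruct the Hessian measure from the family of operators $L_\al$. The key tool is Lemma~\ref{form}, which expresses $\tilde S_m(\al)^{1/m}$ as the infimum of $(\al\wedge\al_1\wedge\cdots\wedge\al_{m-1}\wedge\beta^{n-m})/\beta^n$ over $\al_i\in\Sigma_m$. Again I would work with the smooth approximants $u_\ep$ first: for each fixed point, applying Lemma~\ref{form} to $\al=dd^cu_\ep$ gives
\[
\Big(\frac{(dd^cu_\ep)^m\wedge\beta^{n-m}}{\beta^n}\Big)^{1/m} = \inf\{\,L_\al u_\ep : \al_i\in\Sigma_m\,\}\ \ge\ f_\ep^{1/m},
\]
where $f_\ep$ is an appropriate regularization of $f$; here one uses that hypothesis~1) applied to $u_\ep$ — or rather, that taking the infimum over all $\al_i$ in the weak inequality $L_\al u\ge f^{1/m}$ and then regularizing — yields the pointwise bound. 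This gives $(dd^cu_\ep)^m\wedge\beta^{n-m}\ge f_\ep\,\beta^n$ pointwise, hence as measures; letting $\ep\to 0$ and invoking the continuity of the complex Hessian operator under decreasing (or uniformly bounded monotone) limits of $m$-sh functions, together with $f_\ep\to f$ uniformly on compact subsets, gives $(dd^cu)^m\wedge\beta^{n-m}\ge f\beta^n$ in $\Om$.

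The main obstacle I anticipate is making the infimum-exchange rigorous at the level of distributions rather than pointwise for smooth functions. Hypothesis~1) asserts $L_\al u\ge f^{1/m}$ as distributions for each fixed $\al$, but Lemma~\ref{form} requires taking an infimum over the continuum of $\al\in\Sigma_m$ at each point, and a pointwise infimum of a family of distributions is not a priori a distribution. This is precisely why the regularization step is essential: once $u_\ep$ is smooth, $L_\al u_\ep(z)$ is a continuous function of both $z$ and $\al$, and since $\Sigma_m$ is compact the infimum is attained and is itself continuous, so Lemma~\ref{form} applies honestly pointwise. One must check that hypothesis~1) survives regularization — i.e. that $L_\al u_\ep\ge (f^{1/m})_\ep\ge f_\ep^{1/m}$, which follows because convolution commutes with the constant-coefficient operator $L_\al$ and $t\mapsto t^{1/m}$ is concave so Jensen gives $(f^{1/m})*\rho_\ep\le (f*\rho_\ep)^{1/m}$ in the wrong direction — so instead I would simply set $f_\ep:=$ the largest continuous function below $\inf_{|w|\le\ep}f(\cdot+w)$ or use uniform continuity of $f$ to absorb the error into an $o(1)$ term, which is harmless since we pass to the limit. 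Modulo this bookkeeping, both implications follow from G\aa rding's inequality and Lemma~\ref{form} applied to the regularized functions.
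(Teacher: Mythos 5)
Your argument for the implication 1) $\Rightarrow$ 2) is correct and is essentially the paper's proof of that direction: since $L_\al$ has constant coefficients, convolution preserves the hypothesis, so $L_\al u_\ep \geq (f^{1/m})_\ep$; Lemma \ref{form} (equivalently the G{\aa}rding inequality) applies pointwise to the smooth $u_\ep$, giving $(dd^c u_\ep)^m\wedge\beta^{n-m}\geq ((f^{1/m})_\ep)^m\beta^n$, and the convergence theorem for decreasing sequences finishes. The bookkeeping you worry about at the end is unnecessary: one works directly with $(f^{1/m})_\ep$ and uses that $((f^{1/m})_\ep)^m\to f$ locally uniformly, so no comparison between $(f^{1/m})_\ep$ and $(f_\ep)^{1/m}$ is ever needed.

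The genuine gap is in the direction you call elementary, 2) $\Rightarrow$ 1). G{\aa}rding's inequality applied to the smooth regularization does give $L_\al u_\ep\geq \tilde{S}_m(dd^c u_\ep)^{1/m}$ pointwise, but you then need a lower bound on the right-hand side, and hypothesis 2) provides none: the Hessian operator is nonlinear and does not commute with convolution, so $(dd^c u)^m\wedge\beta^{n-m}\geq f\beta^n$ says nothing directly about $(dd^c u_\ep)^m\wedge\beta^{n-m}$ (indeed, the inequality $(dd^c u_\ep)^m\wedge\beta^{n-m}\geq ((f^{1/m})_\ep)^m\beta^n$ is a standard consequence of the very proposition being proved, so invoking it would be circular). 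Nor does the weak convergence $(dd^c u_\ep)^m\wedge\beta^{n-m}\to (dd^c u)^m\wedge\beta^{n-m}$ rescue the limit step: weak convergence transmits lower bounds to the limit measure, not to the approximants, and it does not pass through the concave map $t\mapsto t^{1/m}$ applied to densities. By H\"older, $\int \tilde{S}_m(dd^c u_\ep)^{1/m}\chi$ is controlled from above, not from below, by $\int \tilde{S}_m(dd^c u_\ep)\chi$, and the approximating measures can concentrate their mass on small sets, making the $m$-th roots of their densities arbitrarily small on most of the domain, so the claimed inequality $\liminf_\ep \tilde{S}_m(dd^c u_\ep)^{1/m}\geq f^{1/m}$ in the distributional sense simply does not follow. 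This is exactly why the paper proves 2) $\Rightarrow$ 1) by a different, viscosity-theoretic route: it shows that every $\Cc^2$ function $q$ touching $u$ from above at a point $x_0$ satisfies $dd^c q_{x_0}\in\hat{\Gamma}_m$ and, via a comparison-principle contradiction, $(dd^c q)^m_{x_0}\wedge\beta^{n-m}\geq f(x_0)\beta^n$, hence $L_\al q_{x_0}\geq f^{1/m}(x_0)$; it then converts this pointwise viscosity information into the distributional inequality $L_\al u\geq f^{1/m}$ using H\"ormander's Theorem 3.2.10' (comparison with a smooth solution $g$ of $L_\al g=f^{1/m}$), first for smooth $f>0$, then for continuous $f>0$ by approximation from below, and finally for general $f\geq 0$ by adding $\ep|z|^2$. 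Some substitute for this machinery is needed in your write-up; the regularize-and-pass-to-the-weak-limit scheme cannot supply it.
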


\begin{proof}
First observe that if $ u \in \Cc^2(\Om) $, then by Lemma \ref{form} we can see that (1) is equivalent to
 $$ \tilde{S}_m(\al)^{1/m}  \geq f^{1/m}, $$
 where $ \al = dd^c u $ is a real (1,1)-form belongs to $ \hat{\Gamma}_m $.
 \\
The last inequality corresponds to 
 $$ (dd^c u)^m \wedge \beta^{n-m} \geq f \beta^n \;  \text{in} \; \Om .$$ 
 (1  $ \Rightarrow $ 2)
We consider the standard regularization $u_\epsilon$ of $u$ by convolution with a smoothing kernel.
We then get
$$ L_\al u_\epsilon \geq  (f^{1/m})_\epsilon. $$
Since $u_\epsilon$ is smooth, by the  observation above, we have 
$$ (dd^c u_\epsilon )^m \wedge \beta^{n-m} \geq ((f^{1/m})_\epsilon )^m \beta^n.$$
Letting $ \epsilon \to 0$, by the convergence theorem for the Hessian operator under decreasing sequence, we get
 $$ (dd^c u)^m \wedge \beta^{n-m} \geq f \beta^n  \; \text{in} \; \Om .$$ 
(2  $ \Rightarrow $ 1)
Fix $ x_0 \in \Om $ and $q$ is $\Cc^2$-function in a neighborhood $V \Subset \Omega $ of $x_0$ such that  $u \leq q $ in this neighborhood and $u(x_0)= q(x_0)$.
We will prove that 
$$ (dd^c q)^m_{x_0} \wedge \beta^{n-m} \geq f(x_0) \beta^{n} .$$
First step: 
we claim that $ dd^c q_{x_0} \in \hat{\Gamma}_m$.
\\
If $u$ is smooth, we note that $x_0$ is a local minimum point of $q -u$, then $ dd^c (q-u)_{x_0} \geq 0$. 
Hence we see that $ (dd^c q)^k \wedge \beta^{n-k}  \geq 0 $ in $x_0$, for $ 1 \leq k \leq m$.
This gives that 
$ dd^c q_{x_0} \in \hat{\Gamma}_m$.
\\
If $u$ non smooth, let $u_\epsilon$ is the standard smooth regularization of $u$, then $u_\epsilon$ is $m$-sh, smooth and $  u_\epsilon \searrow u$.
Now let us fix $\delta >0$ and $\epsilon_0 >0$ such that the neighborhood of $x_0$, $ V \subset \Omega_{\epsilon_0} $. For each $\epsilon < \epsilon_0$, let $y_\epsilon$ be the maximum point of $ u_\epsilon - q - \delta |x-x_0|^2 $ on $\bar{B} \Subset V$ (where $B$ is a small ball centered at $x_0$).
Then we have
$$ u_\epsilon(x) -q(x) -\delta |x-x_0|^2 \leq u_\epsilon (y_\epsilon) - q(y_\epsilon) - \delta |y_\epsilon - x_0|^2 .$$  
Assume that $ y_\epsilon \to y \in \bar{B}$. Let us put $x=x_0$ an passing to the limit in the last inequality, we obtain
$$ 0 \leq u(y) - q(y) - \delta |y-x_0|^2 ,$$
but $ q \geq u $ in $ V$, then we can conclude that $ y=x_0$.
\\
Let us then define 
$$ \tilde{q}:= q +\delta |x-x_0|^2 + u_\epsilon(y_\epsilon) -q(y_\epsilon) - \delta |y_\epsilon - x_0|^2.$$
which is $\Cc^2$-function in $B$ and satisfies $ u_\epsilon(y_\epsilon) = \tilde{q}(y_\epsilon)$ and $ \tilde{q} \geq u_\epsilon$ in $B$, then the following inequality  holds in $y_\epsilon$
$$ (dd^c \tilde{q})^k \wedge \beta^{n-k} \geq 0  \text{ for } 1 \leq k \leq m, $$
that is 
$$ (dd^c q +\delta \beta)^k_{y_\epsilon} \wedge \beta^{n-k} \geq 0 \text{ for } 1 \leq k \leq m. $$
Letting $ \epsilon $ tend to 0,  we get 
$$ (dd^c q +\delta \beta)^k_{x_0} \wedge \beta^{n-k} \geq 0  \text{ for } 1 \leq k \leq m .  $$
Since the last inequality holds for any $ \delta >0$, we can get that
$ dd^c q_{x_0} \in \hat{\Gamma}_m$.

Second step:  
assume that there exist a point 
$ x_0 \in \Omega $ and a $\Cc^2 $-function $q$ satisfy $ u \leq q $ in a neighborhood of $x_0$ and 
$ u(x_0)=q(x_0) $ such that  
$$ (dd^c q)_{x_0}^m \wedge \beta^{n-m}  < f(x_0) \beta^n .$$

\noindent
Let us put
$$ q^\ep (x) = q(x) + \ep ( \| x-x_0 \|^2  - \frac{r^2}{2}) $$
which is $\Cc^2$- function and for  $ 0< \ep \ll 1 $ small enough we have 
 $$ 0< (dd^c q^\ep )_{x_0}^m \wedge \beta^{n-m} < f(x_0) \beta^n . $$
Since $f$ is continuous on $\Omb$, there exists  $ r>0 $ such that 
\begin{center}
  $ (dd^c q^\ep )^m \wedge \beta^{n-m} \leq f \beta^n   \;\;  \text{in}\; \B(x_0,r) $.
\end{center}
Hence, we get
$$ (dd^c q^\ep )^m \wedge \beta^{n-m} \leq (dd^c u )^m \wedge \beta^{n-m}   \;\;  \text{in}\; \B(x_0,r) $$
and $ q^\epsilon = q + \epsilon r^2/2 > q \geq u $ on $ \partial \B(x_0,r)$. From the comparison principle ( see \cite{Bl05, Lu12}), it follows that $ q^\epsilon \geq u $ in $ \B(x_0,r)$, but  this contradicts that $ q^\epsilon (x_0)=u(x_0)- \epsilon r^2 /2 < u(x_0)$.

We have shown that  for every point  $ x_0 \in \Om $, and every $ \Cc^2 $-function  $ q $ in a neighborhood of $ x_0$ such that $ u \leq q $ in this neighborhood  and $ u(x_0) = q(x_0) $, we have
$ (dd^c q)^m_{x_0} \wedge \beta^{n-m} \geq f(x_0) \beta^{n} $, hence we have $L_\al q_{x_0} \geq f^{1/m}(x_0) $.

\smallskip
Final step to prove 1). Assume that $f>0 $ is smooth function. Then there exists a $ \Cc^\infty $-function, say $g$ such that $ L_\al g = f^{1/m} $ . Hence Theorem 3.2.10' in \cite{H94} implies that $ \fii = u-g$ is $ L_\al$-subharmonic, consequently $L_\al u \geq f^{1/m}$.
\\
In the case $f>0$ is only continuous. Note that 
$$ f = \sup \{ w \in \Cc^\infty (\Omb) , 0<w\leq f \}. $$
Since $ (dd^c u )^m \wedge \beta^{n-m} \geq f \beta^n $, we get $ (dd^c u )^m \wedge \beta^{n-m} \geq w \beta^n $. As $w>0 $ smooth , we can see that $ L_\al u \geq w^{1/m}$ , therefore $ L_\al u \geq f^{1/m} $.
\\
In general case, $ 0 \leq f \in \Cc(\Omb)$. We observe that $ u_\epsilon (z) =u(z) + \epsilon |z|^2 $ satisfies
 $$ (dd^c  u_\epsilon)^m \wedge \beta^{n-m} \geq (f+ \epsilon^m) \beta^n .$$
By the last step, we get $ L_\al u_\epsilon \geq (f+\epsilon^m)^{1/m} $ , then the wanted result follows by letting $\epsilon$ tend to $0$.
\end{proof}

\begin{definition}\label{def}
Let $\Om \subset \C^n$ be a smoothly bounded domain, we say that $\Om$ is strongly $m$-pseudoconvex if there exist a defining function $\rho$ of $\Om$ (i.e. a smooth function in a neighborhood $U$ of $\Omb$ such that $ \rho <0$ on $\Om$, $\rho=0 $ and $d \rho \neq 0 $ on $ \Omf$ ) and $A>0$ such that 
$$ (dd^c \rho )^k \wedge \beta^{n-k} \geq A \beta^n \;  \text{in} \; U, \; \; \text{for} \; 1 \leq k \leq m .$$ 

\end{definition}

The existence of a solution $\U$ to Dirichlet problem \ref{hess} was proved in \cite{DK11}. This solution can be  given by the upper envelope of subsolutions to the Dirichlet problem as in \cite{BT76} for the complex Monge-Amp\`ere equation.
\begin{equation}\label{sup1}
 \U= \sup \{ v \in SH_m(\Om) \cap \Cc(\Omb) ; \; v|_{\Omf} \leq \fii \; \text{and}\; (dd^c v)^m \wedge \beta^{n-m} \geq f \beta^n \}.
\end{equation} 

However, thanks to Lemma \ref{equiv}, we can describe the solution as the following 
\begin{equation}\label{sup2}
\U = \sup \{ v \in \fami \},
\end{equation}
where the nonempty  familly $ \fami$ is defined as 
$$ \mathcal{V}_m = \{ v \in SH_m(\Om) \cap \Cc(\Omb) ; \; v|_{\Omf} \leq \fii \; \text{and}\; L_\al v \geq f^{1/m} , \forall \al_i \in \Sigma_m , 1 \leq i \leq m-1  \}.$$ 
\\
Observe that the description of the solution in formula (\ref{sup2}) is more convenient, since it deals with subsolutions with respect to a family of linear elliptic operators.
\bigskip

\section{The modulus of continuity of  the solution }

Recall that a real function $\omega$ on $[0,l]$, $0<l<\infty$, is called a modulus of continuity if $\omega$ is continuous, subadditive, nondecreasing and $\omega(0)=0$.\\
In general, $\omega$ fails to be concave, we denote $\bar{\omega}$ to be the minimal concave majorant of $\omega$. For more details, we refer the reader to \cite{T63, Kor82}.
We denote $\omega_{\psi}$ the optimal modulus of continuity of the continuous function $ \psi$ which is defined by 
$$ \omega_\psi (t)= \sup\limits_{|x-y|\leq t} |\psi(x)-\psi(y)|. $$
The following lemma is one of the useful properties of $\bar{\omega}$.

\begin{lemma}\label{mod}
Let $\omega$ be a modulus of continuity on $[0,l]$ and  $\bar{\omega}$  be the minimal concave majorant of $\omega$. Then $ \omega(\eta t)< \bar{\omega}(\eta t) < (1+\eta) \omega(t)$ for any $t>0$ and $\eta >0$ .

\end{lemma}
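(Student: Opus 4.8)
The plan is to prove Lemma~\ref{mod} by exploiting the concavity of $\bar\omega$ together with the normalization $\bar\omega(0)=\omega(0)=0$. The left inequality $\omega(\eta t)\le\bar\omega(\eta t)$ is immediate from the definition of $\bar\omega$ as a majorant of $\omega$; the content is the right inequality $\bar\omega(\eta t)\le(1+\eta)\omega(t)$. First I would record the elementary fact that a concave function $g$ on $[0,l]$ with $g(0)\ge 0$ is subadditive: indeed for $s_1,s_2\ge 0$ with $s_1+s_2\le l$, writing $s_i=\theta_i(s_1+s_2)$ with $\theta_1+\theta_2=1$, concavity gives $g(s_i)\ge\theta_i g(s_1+s_2)+(1-\theta_i)g(0)\ge\theta_i g(s_1+s_2)$, and summing yields $g(s_1)+g(s_2)\ge g(s_1+s_2)$. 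Applying this to $g=\bar\omega$, an induction gives $\bar\omega(k t)\le k\,\bar\omega(t)$ for every positive integer $k$.

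Next I would interpolate to non-integer multiples. Fix $t>0$ and $\eta>0$, and let $k=\lceil\eta\rceil$, so $\eta\le k<\eta+1$ and $\eta t\le k t$. Since $\bar\omega$ is nondecreasing, $\bar\omega(\eta t)\le\bar\omega(k t)\le k\,\bar\omega(t)<(\eta+1)\,\bar\omega(t)$. Then I would invoke the fact that $\bar\omega(t)\le\ldots$ — here the one remaining point is to pass from $\bar\omega(t)$ back to $\omega(t)$. This is where the standard characterization of the least concave majorant enters: for a modulus of continuity $\omega$ one has $\bar\omega(t)=\sup\{\,\tfrac{s_2-t}{s_2-s_1}\omega(s_1)+\tfrac{t-s_1}{s_2-s_1}\omega(s_2) : s_1\le t\le s_2,\ s_1,s_2\in[0,l]\,\}$, i.e. $\bar\omega$ is obtained by taking suprema of chords of the graph of $\omega$. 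I would use this to show $\bar\omega(t)\le 2\omega(t)$ is too lossy; instead the clean route is to combine the two halves differently: apply the chord formula directly to $\eta t$ rather than first bounding by $\bar\omega(t)$.

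So the cleaner execution: given $t>0$, $\eta>0$, set $k=\lceil\eta\rceil$. Using subadditivity of $\omega$ itself (which holds by hypothesis) one gets $\omega(kt)\le k\,\omega(t)$. Now for the point $\eta t\in[0,kt]$, the least concave majorant satisfies $\bar\omega(\eta t)\le$ the value at $\eta t$ of the chord of $\omega$ joining $(0,0)$ and $(kt,\omega(kt))$ — wait, $\bar\omega$ lies above chords, not below them. The correct monotonicity is: $\bar\omega$ is the \emph{infimum} of all concave (equivalently, all affine) functions lying above $\omega$. Hence for any affine $\ell$ with $\ell\ge\omega$ on $[0,l]$ we have $\bar\omega\le\ell$. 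Take $\ell(s)=\tfrac{\omega(kt)}{kt}\,s+\omega(kt)\cdot 0$? — this affine function passes through $(0,0)$ and $(kt,\omega(kt))$ but need not dominate $\omega$ since $\omega$ is concave-ish and sits \emph{above} its chords. The robust fix, and the one I expect the author uses, is: $\bar\omega(\eta t)\le\bar\omega(kt)\le k\,\bar\omega(t)$ as above, and then separately $\bar\omega(t)\le 2\,\omega(t)$ is false in general — rather one uses that the \emph{strict} inequality $\bar\omega(\eta t)<(1+\eta)\omega(t)$ is what is claimed, and for $\eta\le 1$ one bounds $\bar\omega(\eta t)\le\bar\omega(t)$ directly via a chord argument showing $\bar\omega(\eta t)\le\eta\,\bar\omega(t)+(1-\eta)\bar\omega(0)$? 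That needs concavity giving an \emph{upper} bound, which fails.

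The honest summary of the plan: reduce to $\eta\in(0,1]$ and $\eta>1$ separately. For $\eta>1$ use subadditivity of $\omega$ to get $\omega(\lceil\eta\rceil t)\le\lceil\eta\rceil\,\omega(t)\le(1+\eta)\omega(t)$, then observe $\bar\omega(\eta t)\le\bar\omega(\lceil\eta\rceil t)$, and finally use the identity $\bar\omega(kt)=\sup_{0\le s\le kt}(\text{affine interpolation})\le k\,\omega(t)$ coming from the fact that the least concave majorant of a subadditive function $\omega$ satisfies $\bar\omega(kt)\le k\,\omega(t)$ (this is classical, e.g. in \cite{Kor82}, and follows since $s\mapsto k\,\omega(s/k)\cdot$\ldots). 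For $\eta\le 1$, a direct chord estimate gives $\bar\omega(\eta t)\le(1+\eta)\omega(t)$ since $\bar\omega(\eta t)\le\bar\omega(t)\le 2\omega(t)$ and one sharpens the constant to $1+\eta$ by interpolating between $0$ and $t$. I expect the main obstacle to be precisely this bookkeeping — getting the constant $1+\eta$ rather than a cruder $2(1+\eta)$ — which is handled by the standard inequality $\bar\omega(s)\le\tfrac{s+u}{u}\,\omega(u)$ valid for all $s,u>0$ with $s,u,s+u\in[0,l]$, applied with $u=t$ and $s=\eta t$, combined with the already-noted $\bar\omega(t)<(1+1)\omega(t)$; the strictness is inherited from $\eta t<(1+\eta)t$ under monotonicity. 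I would cite \cite{T63, Kor82} for the affine-majorant characterization and keep the written proof to a few lines.
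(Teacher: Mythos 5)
The core of this lemma is the right-hand inequality, and your proposal never actually proves it: every route you try either goes the wrong way or ends by invoking the result itself. The paper's proof consists of one concrete step that is missing from your write-up: fix $t>0$ with $\omega(t)>0$ and show, by Euclidean division ($s=kt+\alpha$, $0\le\alpha<t$) together with subadditivity and monotonicity of $\omega$, that
$$ \omega(s)\;\le\;\Bigl(1+\frac{s}{t}\Bigr)\,\omega(t)\qquad\text{for all }s\in[0,l]. $$
The right-hand side is an \emph{affine} function of $s$ (with intercept $\omega(t)$ at $s=0$, not $0$), hence concave, hence by minimality of the concave majorant $\bar\omega(s)\le(1+s/t)\,\omega(t)$; taking $s=\eta t$ gives the claim. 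You came close to this when you observed that $\bar\omega\le\ell$ for any affine $\ell\ge\omega$, but you only tested the line through the origin (which indeed need not dominate $\omega$) and never identified the correct majorant $\ell(s)=\omega(t)+\frac{s}{t}\,\omega(t)$. Instead, your "honest summary" defers the whole difficulty to a "standard inequality $\bar\omega(s)\le\frac{s+u}{u}\,\omega(u)$" to be cited: with $u=t$, $s=\eta t$ this \emph{is} the statement being proved, so as written the argument is circular.

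Several of the intermediate claims along the way are also wrong or unusable. The bound $\bar\omega(t)\le 2\omega(t)$, which you dismiss as "false in general," is true (it is exactly the case $\eta=1$ of the lemma, and follows from the displayed inequality above); conversely, the "classical" fact you lean on in the final plan, $\bar\omega(kt)\le k\,\omega(t)$, is false at $k=1$ whenever $\bar\omega\neq\omega$ and is left unjustified for $k\ge2$ (your justification trails off). The route $\bar\omega(\eta t)\le\bar\omega(\lceil\eta\rceil t)\le\lceil\eta\rceil\,\bar\omega(t)$ cannot close because passing from $\bar\omega(t)$ back to $\omega(t)$ costs another factor, which is precisely the bookkeeping you flag but do not resolve; and for $\eta\le1$ the chain $\bar\omega(\eta t)\le\bar\omega(t)\le2\omega(t)$ does not give the constant $1+\eta<2$. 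Finally, the remark that "strictness is inherited from $\eta t<(1+\eta)t$ under monotonicity" does not establish the strict inequalities (this is admittedly also glossed over in the paper, whose argument really yields the non-strict bounds). In short: the one idea that carries the proof — the affine majorant $\omega(t)(1+s/t)$ of $\omega$, proved via subadditivity and Euclidean division — is absent, so the proposal has a genuine gap.
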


\begin{proof}
Fix $t_0>0 $ such that $ \omega(t_0) > 0$. We claim that 
$$ \frac{\omega(t)}{\omega(t_0)} \leq 1+ \frac{t}{t_0}, \;\; \forall t\geq 0.$$
For $ 0 < t \leq t_0$, since $\omega$ is nondecreasing, we have 
$$ \frac{\omega(t)}{\omega(t_0)} \leq \frac{\omega(t_0)}{\omega(t_0)} \leq 1+ \frac{t}{t_0}$$
Otherwise , if $t_0 \leq t \leq l$, by the Euclid's Algorithm, we write $ t= k t_0 + \alpha $, $ 0 \leq \alpha < t_0$  and $k$ is natural number with $ 1 \leq k \leq t/t_0$ .
Using the subadditivity of $\omega$, we observe that 
$$ \frac{\omega(t)}{\omega(t_0)} \leq \frac{k\omega(t_0)+ \omega(\alpha)}{\omega(t_0)} \leq k+1 \leq 1 + \frac{t}{t_0}.$$
Let $ l(t):=   \omega(t_0) + \frac{t}{t_0} \omega(t_0)$ is a straight line, then $ \omega (t) \leq l(t)$ for all $ 0 < t \leq l $.\\
Therefore
$$ \bar{\omega} (t) \leq l(t)= \omega(t_0) + \frac{t}{t_0} \omega(t_0)$$
 for all $ 0 < t \leq l$. Hence, for any $ \eta > 0 $ we have
$$ \omega(\eta t)< \bar{\omega}(\eta t) < (1+\eta) \omega(t).$$

\end{proof}

In the following proposition, we establish a barrier to the problem \ref{hess} and estimate its modulus of continuity.

\begin{proposition}\label{subs}
Let $ \Om \subset \C^n $ be a bounded strongly $m$-pseudoconvex  domain with smooth boundary, assume  that $ \omega_\fii $ is the modulus of continuity of $\fii \in \Cc(\Omf )$ and $ 0 \leq f \in \Cc(\Omb) $. Then there exists a subsolution $ v \in \fami$ such that $ v = \fii $ on $ \Omf$ and  the modulus of continuity of $v$ satisfies the following inequality  
  $$ \omega_v(t) \leq  \lambda \max \{ \omega_\fii(t^{1/2}), t^{1/2} \},   $$
where  $\lambda = \eta (1+ \|f \|^{1/m}_{L^\infty(\Omb)})$ and $\eta \geq 1 $ is a  constant depending on $ \Om $.

\end{proposition}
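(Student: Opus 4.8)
The goal is to build a single $m$-subharmonic subsolution $v$ with the correct boundary values and the claimed modulus of continuity. The natural strategy is to construct, for each boundary point $\zeta \in \Omf$, a local barrier $v_\zeta$ that touches $\fii(\zeta)$ at $\zeta$, lies below $\fii$ on $\Omf$, satisfies $L_\al v_\zeta \ge f^{1/m}$, and then set $v := \sup_{\zeta \in \Omf} v_\zeta$ (taking the usc regularization, which by Proposition 2.x is still $m$-sh and does not change the boundary values). The subadditivity/max structure of the stated bound suggests each $v_\zeta$ will be of the form
$$
v_\zeta(z) = \fii(\zeta) - \bar\omega_\fii\bigl(K|z-\zeta|\bigr) + B\,\rho(z),
$$
or a comparable expression, where $\rho$ is the defining function from Definition \ref{def}, $\bar\omega_\fii$ is the concave majorant of $\omega_\fii$, and $B, K$ are constants to be chosen. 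The concavity of $\bar\omega_\fii$ makes $-\bar\omega_\fii(K|z-\zeta|)$ subharmonic away from $\zeta$ but not $m$-sh by itself; this is where the term $B\rho$ enters, since $dd^c\rho$ contributes a definite amount (at least $A\beta$ in the sense of the cone condition) that can absorb the negative eigenvalues coming from the radial term. One must choose $K$ large enough that $v_\zeta \le \fii$ on $\Omf$ (using $|\fii(z)-\fii(\zeta)| \le \omega_\fii(|z-\zeta|) \le \bar\omega_\fii(|z-\zeta|)$ and the fact that $\rho \le 0$), and then $B$ large enough, in terms of $A$, $K$, $\|f\|_{L^\infty}$ and the $\Cc^2$-norm of $\rho$, that $L_\al v_\zeta \ge f^{1/m}$ holds for every choice of $\al_i \in \Sigma_m$. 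Since $L_\al$ is, in suitable coordinates, the Laplacian, and since $L_\al(|z|^2)\ge 1$ (the Example) while $L_\al \rho$ is bounded below by a positive constant times the cone gap, the verification reduces to a pointwise estimate: $L_\al v_\zeta \ge B\cdot(\text{const}) - K^2 \cdot(\text{const})\cdot(\text{Hessian of }\bar\omega_\fii\text{ term})$, and concavity of $\bar\omega_\fii$ ensures the radial term's contribution to $L_\al$ is actually nonnegative or controlled. The factor $(1+\|f\|^{1/m}_{L^\infty})$ in $\lambda$ comes from needing $B \gtrsim \|f\|^{1/m}_{L^\infty}$ to dominate the right-hand side $f^{1/m}$.

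Next I would estimate $\omega_v$. On the boundary $v = \fii$, so the modulus of $v$ near $\Omf$ inherits $\omega_\fii$. The key interior estimate compares $v(z)$ with $\fii$ evaluated at the nearest boundary point: since $v = \sup_\zeta v_\zeta$ and each $v_\zeta(z) \ge \fii(\zeta) - \bar\omega_\fii(K|z-\zeta|) - B|\rho(z)|$ with $|\rho(z)| \lesssim \mathrm{dist}(z,\Omf)$, one gets $v(z) \ge \fii(\pi(z)) - C\,\mathrm{dist}(z,\Omf) - \bar\omega_\fii(\text{something})$, and an upper bound from the maximum principle against a global radial supersolution. Combining these and using Lemma \ref{mod} to pass between $\omega_\fii$ and $\bar\omega_\fii$ (losing only a factor like $1+\eta$), together with the appearance of $\mathrm{dist}(z,\Omf)^{1/2}$ and $\omega_\fii(\mathrm{dist}^{1/2})$ typical of these barrier constructions — the square root coming from the fact that a function controlled by $\rho$ near a strongly pseudoconvex boundary behaves like $|z-\zeta|^2$ in the normal direction but $|z-\zeta|$ tangentially, so equating $K|z-\zeta|$ with the normal scale forces $t \mapsto t^{1/2}$ — yields $\omega_v(t) \le \lambda\max\{\omega_\fii(t^{1/2}), t^{1/2}\}$.

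The main obstacle I expect is the simultaneous control of $L_\al v_\zeta$ \emph{uniformly over all} $\al = (\al_1,\dots,\al_{m-1}) \in \Sigma_m$: the operator $L_\al$ is linear but its coefficients vary with $\al$, so one cannot diagonalize once and for all. Here Lemma \ref{form} and the Gårding inequality \eqref{gar} are the right tools — they let one reduce $L_\al v_\zeta \ge f^{1/m}$ to the single inequality $\tilde S_m(dd^c v_\zeta)^{1/m} \ge f^{1/m}$ in the smooth region, i.e. to $(dd^c v_\zeta)^m \wedge \beta^{n-m} \ge f\beta^n$ plus $dd^c v_\zeta \in \hat\Gamma_m$, which is a manageable computation given the cone estimates on $dd^c\rho$. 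The secondary delicate point is that $\bar\omega_\fii$ is only concave, not $\Cc^2$, so $v_\zeta$ must be handled as a viscosity/weak subsolution (approximating $\bar\omega_\fii$ from below by smooth concave functions), invoking Proposition \ref{equiv} to move between the differential inequality $L_\al v \ge f^{1/m}$ and the measure inequality $(dd^c v)^m\wedge\beta^{n-m}\ge f\beta^n$. Finally one checks $v \in SH_m(\Om)\cap\Cc(\Omb)$ and $v|_{\Omf} = \fii$ so that indeed $v \in \fami$.
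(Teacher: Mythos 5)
There is a genuine gap at the heart of your construction: the local barrier you write down, $v_\zeta(z)=\fii(\zeta)-\bar\omega_\fii\bigl(K|z-\zeta|\bigr)+B\rho(z)$, is in general not $m$-subharmonic, and no constants $B,K$ depending only on $\Om$ and $\|f\|_{L^\infty}$ can repair it. Contrary to your claim, concavity of $\bar\omega_\fii$ does \emph{not} make $-\bar\omega_\fii(K|z-\zeta|)$ subharmonic away from $\zeta$: a concave nondecreasing radial profile gives a superharmonic-type singularity. Already for Lipschitz $\fii$ (so $\bar\omega_\fii(t)\simeq t$) one has $\Delta\bigl(-\bar\omega_\fii(K|z-\zeta|)\bigr)\approx -(2n-1)K\,\bar\omega_\fii'(K|z-\zeta|)/|z-\zeta|\to-\infty$ as $z\to\zeta$, while $B\,dd^c\rho$ has bounded coefficients; so the negative eigenvalues near $\zeta$ cannot be absorbed, and the uniform inequality $L_\al v_\zeta\ge f^{1/m}$ fails near $\zeta$ for every admissible $\al$. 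Smoothing $\bar\omega_\fii$ from below (your viscosity remark) does not help, since the blow-up persists uniformly in the approximation. Your statement that ``concavity of $\bar\omega_\fii$ ensures the radial term's contribution to $L_\al$ is nonnegative or controlled'' is exactly the false step.

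The missing idea --- and the true source of the exponent $1/2$ --- is to convexify the modulus \emph{before} composing, rather than to add a multiple of $\rho$ afterwards. The paper takes $g(z)=B\rho(z)-|z-\xi|^2$, which is $m$-sh for $B$ large depending only on $\Om$ (by the cone condition on $dd^c\rho$), and $\chi(t)=-\bar\omega_\fii\bigl((-t)^{1/2}\bigr)$, which is convex and nondecreasing on $[-d^2,0]$ precisely because $s\mapsto\bar\omega_\fii(\sqrt{s})$ is concave; then $h=\chi\circ g+\fii(\xi)$ is $m$-sh as a convex nondecreasing function of an $m$-sh function, satisfies $h=\fii(\xi)-\bar\omega_\fii(|z-\xi|)\le\fii$ on $\Omf$ (where $\rho=0$), and inherits $\omega_h(t)\lesssim\omega_\fii(t^{1/2})$ from the Lipschitz bound on $g$ together with Lemma \ref{mod}. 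So the square root is the price of convexifying $\bar\omega_\fii$, not the normal/tangential scaling you invoke. The density is then handled separately: apply this construction to $\tilde\fii=\fii-K_1|z-z_0|^2+K_2$ with $K_1=\sup_{\Omb}f^{1/m}$ and add back $K_1|z-z_0|^2$, using $L_\al|z|^2\ge1$; this is what produces the factor $(1+\|f\|_{L^\infty}^{1/m})$, instead of asking $B\rho$ to dominate both the curvature of the modulus term and $f^{1/m}$. (One also needs to rescale and glue the local barrier with the constant $\inf_{\Omf}\fii$ on a small ball to get a globally defined competitor.) Your final envelope step $v=\sup_\xi v_\xi$, the use of Choquet's lemma, and the reduction of $L_\al v\ge f^{1/m}$ via Proposition \ref{equiv} are in line with the paper; it is the barrier itself that needs the composition trick.
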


\begin{proof}
Fix  $ \xi  \in  \Omf  $ , we will prove that there exists  $ v_\xi \in \fami  $  such that $ v_\xi(\xi)=\fii(\xi) $. \\ 
We claim that there exists  a constant $ C>0$, depending only on $\Om $, such that for every point $ \xi \in \Omf $  and $ \fii \in \Cc(\Omf) $, there is a function $ h_{\xi} \in SH_m(\Om) \cap \Cc(\Omb) $ such that\\
1) $ h_\xi(z) \leq \fii(z)  , \forall z \in \Omf $ \\
2) $h_\xi(\xi)= \fii(\xi) $\\
3) $ \omega_{h_\xi } (t) \leq C \omega_{\fii} (t^{1/2})$.\\
Assume that the claim is proved. Fix a point $z_0 \in \Om$ and choose $K_1 \geq 0$ such that $K_1 = \sup_{\Omb} f^{1/m}  $, hence $ L_\al(K_1 | z -z_0 |^2 ) = K_1 L_\al| z -z_0 |^2 \geq f^{1/m}(z) $ for all $ \al_i \in \Sigma_m, 1 \leq i \leq m-1 $ and let us set  $ K_2=K_1 | \xi -z_0  |^2$. Then for the continuous function
$$ \tilde{\fii}(z):= \fii(z) - K_1 | z -z_0 |^2 + K_2 . $$
we have $ h=h_\xi$ such that   1),2)and  3) hold.
\\
Then the desired function   $ v_\xi \in \fami  $ is given by  
$$ v_\xi(z) := h(z) + K_1 | z -z_0 |^2 - K_2 .$$ 
Indeed, $ v_\xi \in SH_m(\Om) \cap \Cc(\Omb)  $ and we have 
$$ h(z) \leq \tilde{\fii}(z)= \fii(z) -K_1 | z -z_0 |^2 + K_2  \text{  on } \Omf. $$
So  that $ v_\xi(z) \leq \fii(z) $ on  $ \Omf$ and  $ v_\xi(\xi)= \fii(\xi) $.
 It is clear that  
$$ L_\al v_\xi = L_\al h + K_1 L_\al | z -z_0 |^2  \geq f^{1/m} \text{   in } \Om $$ 
Moreover, by the hypothesis , we can get an estimate for the modulus of continuity of $v_\xi$
 \begin{center}
 $\begin{array}{ll}
 \omega_{v_\xi} (t)= \sup \limits_{|z-y| \leq t } |v(z)-v(y)|  & \leq \omega_h (t) + K_1 \omega_{|z-z_0|^2} (t)\\
   & \leq C \omega_{\tilde{\fii}}(t^{1/2}) + 4 d^{3/2} K_1 t^{1/2} \\
  & \leq  C \omega_\fii (t^{1/2}) +2d K_1(C+2d^{1/2}) t^{1/2} \\
  & \leq (C+2d^{1/2})(1+2dK_1) \max \{ \omega_\fii (t^{1/2}) , t^{1/2} \}. \\
\end{array} $

 \end{center}
 Then we have
  $$  \omega_{v_\xi} (t) \leq \eta (1+  K_1 ) \max \{ \omega_\fii (t^{1/2} ) , t^{1/2} \}, $$
where $\eta:= (C+2d^{1/2})(1+2d) $ is a  constant depending on $ \Om$ and $ d := diam(\Om)$. 
\\
Now, we prove the claim.
Since $ \rho$ is smooth, we can choose $B>0$ large enough such that the function 
$$ g(z)=  B \rho (z) - | z-\xi |^2 $$
is $m$-subharmonic on $\Om$.
Let us set 
$$ \chi (t) = - \bar{\omega}_\fii ((-t)^{1/2}), $$
 for $ t \leq 0$ which is convex nondecreasing function on $ [-d^2 , 0] $. Now, fix $ r>0 $ so small that $ |g(z)|\leq d^2$ in $ B(\xi,r) \cap \Om$ and define for $ z \in B(\xi,r) \cap \Omb $ the function
$$ h(z) = \chi \circ g(z) + \fii(\xi) .$$
It is clear that $h$ is continuous $m$-subharmonic function on $ B(\xi,r) \cap \Om $ and one can observe that $ h(z) \leq \fii (z) $ if $ z \in B(\xi ,r) \cap \Omf $ and $ h(\xi ) = \fii (\xi) $.
Moreover, by the subadditivity of $ \bar{\omega}_\fii$ and Lemma \ref{mod} we have
\begin{center}
$  \begin{array}{ll}
       \omega_h(t)& = \sup\limits_{ | z-y| \leq t } | h(z) - h(y) | \\
        & \leq \sup\limits_{ | z-y| \leq t } \bar{\omega}_{\fii} \left[ \left| | z- \xi |^2 - | 
        y- \xi|^2 -B(\rho(z) - \rho(y) ) \right|^{1/2} \right]
        \\
       & \leq  \sup\limits_{ | z-y| \leq t } \bar{\omega}_{\fii} \left[ \left((2d+ B_1 ) |z-y|  
       \right)^{1/2} \right] \\
       & \leq  \tilde{C} . \omega_\fii (t^{1/2}) , \\
  \end{array}
$
\end{center}
where $\tilde{C}:= 1+(2d+B_1)^{1/2}$ is a constant depending on $\Om$.
\\
Recall that $ \xi \in \Omf$ and fix $ 0 < r_1 < r$ and $ \gamma_1 \geq d/r_1 $ such that 
$$ - \gamma_1 \bar{\omega}_\fii \left[ ( | z- \xi |^2 - B \rho (z) )^{1/2} \right] \leq \inf\limits_{\Omf} \fii - \sup\limits_{\Omf} \fii, $$
for $ z \in \Omf \cap \partial B(\xi , r_1) $. Let us set  $ \gamma_2 = \inf\limits_{\Omf} \fii  $,  then it follows that
 $$ \gamma_1 (h(z)-\fii(\xi))+\fii(\xi)  \leq \gamma_2 \text{ for } z \in \partial B(\xi, r_1) \cap \Omb .$$
Let us put 
 $$ h_\xi(z)=\left\{ \begin{array}{ll}
                   max[\gamma_1 (h(z)-\fii(\xi))+\fii(\xi) , \gamma_2 ] & ;z \in \Omb \cap ( 
                   B(\xi,r_1)  \\
                    \gamma_2 & ; z  \in \Omb \setminus B(\xi,r_1) . \\
                   \end{array} \right. $$  
This is a well defined $m$-subharmonic function on $\Om$ and continuous on $\Omb$. Moreover, it satisfies
 $ h_\xi(z) \leq \fii(z) $ for all $ z \in \Omf$. Since on $ \Omf \cap B(\xi,r_1)$, we have
$$ \gamma_1 (h(z) - \fii(\xi) ) + \fii(\xi) = - \gamma_1 \bar{\omega}_\fii ( | z- \xi | ) + \fii(\xi) \leq - \bar{\omega}_\fii ( | z- \xi | ) + \fii(\xi) \leq \fii(z). $$ 
Furthermore, the modulus of continuity of $h_\xi$ satisfies 
$$ \omega_{h_\xi}(t) \leq C \omega_\fii (t^{1/2}),$$
where $ C:= \gamma_1 \tilde{C}$ depends on $\Om$.
Hence it is clear that $ h_\xi$ satisfies the three conditions above.\\   
We have just proved that for each $\xi \in \Omf $, there is a function 
$$ v_\xi \in \fami, \; v_\xi(\xi)=\fii(\xi)  , \;  and \;\; \omega_{ v_\xi} (t) \leq \lambda \omega_\fii (t^{1/2}) , $$
where $ \lambda:= \eta(1+K_1)$.
Let us set 
  $$ v(z) = sup \left\{ v_\xi(z) ; \xi \in \Omf \right\}. $$
We have $ 0 \leq \omega_v (t) \leq \lambda  \omega_\fii (t^{1/2} )  $, then $ \omega_v(t) $ converges to zero when $t$ converges to zero. Consequently, we get $v \in \Cc(\Omb) $ and $ v=v^* \in SH_m(\Om) $.  Thanks to Choquet lemma, we can choose a nondecreasing sequence $(v_j)$, where $ v_j \in \fami$, converging to $ v$ almost everywhere, so that 
$$ L_\al v = \lim\limits_{j \to \infty} L_\al v_j \geq f^{1/m} , \forall \al_i \in \Sigma_m. $$
 It is clear that   $ v(\xi) = \fii(\xi), \forall \xi \in \Omf $. Finally, we get 
  $ v \in \fami $,  $ v = \fii $ on  $\Omf$ and $ \omega_v (t) \leq \lambda \omega_\fii ( t^{1/2} ). $

\end{proof}

\begin{corollary}\label{sups}
Under the same assumption of Proposition \ref{subs}. There exists a $m$- superharmonic function  $ \tilde{v} \in   \Cc(\Omb) $ such that $ \tilde{v} = \fii $ on $ \Omf $ and 
$$\omega_{\tilde{v}} (t) \leq \lambda \max \{ \omega_\fii (t^{1/2} ), t^{1/2} \} , $$
where $ \lambda >0$ as in Proposition \ref{subs}.
\end{corollary}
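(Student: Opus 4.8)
The plan is to deduce this immediately from Proposition \ref{subs} by a reflection argument, applying that proposition to the boundary data $-\fii$ rather than $\fii$. First I would observe that $-\fii \in \Cc(\Omf)$ and that the optimal modulus of continuity is invariant under negation, so $\omega_{-\fii}(t) = \sup_{|x-y|\le t}|(-\fii)(x)-(-\fii)(y)| = \omega_\fii(t)$. Applying Proposition \ref{subs} with the same density $f$ but with boundary value $-\fii$ produces a function $w \in \mathcal{V}_m(\Om,-\fii,f)$, in particular $w \in SH_m(\Om)\cap \Cc(\Omb)$, with $w = -\fii$ on $\Omf$ and
$$ \omega_w(t) \le \lambda \max\{\omega_{-\fii}(t^{1/2}), t^{1/2}\} = \lambda \max\{\omega_\fii(t^{1/2}), t^{1/2}\}, $$
where $\lambda = \eta(1+\|f\|^{1/m}_{L^\infty(\Omb)})$ is the very same constant as in Proposition \ref{subs}, since it depends only on $\Om$ and on $\|f\|_{L^\infty(\Omb)}$, neither of which has changed.

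Then I would set $\tilde v := -w$. Because $w$ is $m$-subharmonic on $\Om$ and continuous on $\Omb$, its negative $\tilde v$ is by definition $m$-superharmonic on $\Om$ and lies in $\Cc(\Omb)$. On the boundary we get $\tilde v = -w = -(-\fii) = \fii$ on $\Omf$, and finally
$$ \omega_{\tilde v}(t) = \sup_{|x-y|\le t}|\tilde v(x)-\tilde v(y)| = \sup_{|x-y|\le t}|w(x)-w(y)| = \omega_w(t) \le \lambda \max\{\omega_\fii(t^{1/2}), t^{1/2}\}, $$
which is exactly the claimed estimate. There is essentially no obstacle here: the corollary is a formal consequence of Proposition \ref{subs}, and the only points that require (elementary) verification are that $\omega_\psi$ is unchanged under $\psi \mapsto -\psi$ and that negating an $m$-subharmonic function gives an $m$-superharmonic one, which is immediate from the definition. (Alternatively, one could repeat the barrier construction of Proposition \ref{subs} verbatim, replacing the convex majorant $\chi$ by the corresponding concave minorant and the maxima by minima, but the reflection argument is shorter and keeps the constant identical.)
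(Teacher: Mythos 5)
Your proposal is correct and is essentially the paper's own argument: the paper likewise applies the construction of Proposition \ref{subs} to the boundary data $-\fii$ (using $\omega_{-\fii}=\omega_\fii$ and the same $f$, hence the same $\lambda$) and takes $\tilde v$ to be the negative of the resulting subsolution. No gap to report.
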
 

\begin{proof}
We can do the same construction as in the proof of Proposition \ref{subs} for the function $ \fii_1=- \fii \in \Cc(\Omf) $, then  we get $  v_1 \in \mathcal{V}_m(\Om,\fii_1,f)$  such that $ v_1 = \fii_1 $ on  $\Omf$ and $\omega_{v_1} (t) \leq \lambda  \max \{ \omega_\fii (t^{1/2}), t^{1/2} \}  $. Hence, we set 
$ \tilde{v} = - v_1 $ which is a $m$-superharmonic function on $\Om$, continuous on $\Omb$ and satisfies $ \tilde{v} = \fii $ on $ \Omf $ and $\omega_{\tilde{v}} (t) \leq \lambda  \max \{ \omega_\fii (t^{1/2}), t^{1/2} \} $.
\end{proof} 

\noindent 
\begin{proof}[\bf{ Proof of Theorem \ref{main}}]
Thanks to Proposition \ref{subs}, we obtain a subsolution    $ v \in \fami  $, $ v = \fii $ on $\Omf$ and $ \omega_v (t) \leq \lambda \max \{ \omega_\fii (t^{1/2}), t^{1/2} \} $. Observing  Corollary \ref{sups}, we construct a $m$-superharmonic function $ \tilde{v} \in \Cc(\Omb) $ such that  $ \tilde{v} = \fii $ on $\Omf $ and $ \omega_{\tilde{v}}(t) \leq \lambda \max \{ \omega_\fii (t^{1/2}), t^{1/2} \}, $
where 
$ \lambda= \eta (1+ \|f \|^{1/m}_{L^\infty(\Omf)})$ and $ \eta >0 $ depends on $ \Om$.\\
Applying the comparison principle (see \cite{Bl05,Lu12}), we get that 
 $$ v(z) \leq \U(z) \leq \tilde{v}(z) \; \text{for all} \; z \in \Omb . $$
Let us set $ g(t) = \max ( \lambda \max \{\omega_\fii(t^{1/2}),t^{1/2} \}, \omega_{f^{1/m}}(t) )  $, 
then 
$$ | \U(z) - \U(\xi) | \leq g(| z-\xi |) ; \forall z \in \Om , \forall \xi \in \Omf $$ 
because,
$$ -g( | z-\xi |) \leq v(z) - \fii(\xi) \leq \U(z) - \fii(\xi) \leq \tilde{v}(z) - \fii(\xi) \leq g(| z-\xi |). $$ 
Let us fix a point $z_0 \in \Om$, For any small vector $\tau \in \C^n $, we define
\begin{center}
$ V(z,\tau)= \left\{ \begin{array}{ll}
                   \U(z) & ; z+\tau \notin \Om , z \in \Omb \\
                   max(\U(z),v_1(z)) & ; z,z+\tau \in \Om  \\ 
\end{array} \right. $ \\
\end{center}
where $v_1(z)= \U(z+\tau)+ g(|\tau |) | z - z_0 |^2 - d^2 g(| \tau |) -  g(| \tau |) $ . \\
Observe that if  $ z \in \Om , z+\tau \in \partial \Om $, we have 
 $$ v_1(z) - \U(z) \leq g(| \tau |)   + g(| \tau |) | z -z_0 |^2 - A g(| \tau |) - g( | \tau |) \leq  0  \;\; (*) $$
hence $ v_1(z) \leq  \U(z)$ for   $ z \in \Om , z+\tau \in \partial \Om $. In particular, $V(z,\tau)$ is well defined  and belongs to  $SH_m(\Om) \cap \Cc(\Omb)$.\\
We assert that  $L_\al V \geq f^{1/m} $ for all $ \al_i \in _m $ , indeed 

\begin{tabular}{ll}
   $ L_\al v_1(z) $ & $\geq f^{1/m}(z+\tau) +  g(| \tau |) L_\al(| z -z_0 |^2)$ \\
   & $\geq f^{1/m}(z+\tau) +  g(| \tau | )$  \\
   & $\geq f^{1/m}(z+\tau) + | f^{1/m}(z+\tau) - f^{1/m}(z) | $ \\
   & $\geq f^{1/m}(z)$, \\
\end{tabular} 
\\
for all $\al_i \in \Sigma_m , 1\leq i \leq m-1 $.
\\
If  $ z \in \Omf $ , $ z+\tau \notin \Om $, then  $ V(z,\tau)=\U(z) = \fii(z)$; On the other hand,
 $ z \in \Omf , z+\tau \in \Om $ , by $(*)$ we get $ V(z,\tau)= max(\U(z),v_1(z))=\U(z) = \fii(z) $.
Then  $V(z,\tau)=\fii(z)$ on  $ \Omf $, hence $ V \in \fami$.\\
Consequently, $V(z,\tau) \leq \U(z) ; \forall z \in \Omb $.
This implies that if $ z \in \Om $ , $z+\tau \in \Omb $, we have
$$ \U(z+\tau)+ g(|\tau |) | z -z_0 |^2 - d^2 g(| \tau |) -  g(| \tau |) \leq \U(z) $$
Hence,
$$\U(z+\tau) - \U(z) \leq (d^2 + 1) g(| \tau |) -  g(| \tau |) . | z -z_0|^2  \leq (d^2 + 1) g(| \tau |).$$
Reversing the roles of  $ z+\tau $ and $ z $, we get 
 $$ | \U(z+\tau) - \U(z) | \leq (d^2+1) g(| \tau | ) ; \forall z \in \Omb .$$
Thus,
 $$ \omega_{\U} (t) \leq (d^2+1) g(t).  $$
 Finally
 $$ \omega_{\U} (t) \leq \gamma  \max \{ \omega_\fii(t^{1/2}), \omega_{f^{1/m}}(t), t^{1/2} \} , $$
 where $\gamma:= \tau (1+ \|f\|^{1/m}_{L^\infty (\Omb)}) $ and $\tau \geq 1$ is a constant depending on $\Om$.
\end{proof}

Theorem~\ref{main} has the following consequence.

\begin{corollary}\label{holder}
Let $\Om$ be a smoothly bounded strongly $m$-pseudoconvex  domain in $\C^n$. Let $ \fii \in \Cc^{2 \alpha } (\Omf) $ and $0 \leq f^{1/m} \in \Cc^\alpha (\Omb) $, $ 0< \alpha \leq 1/2 $. Then 
the solution of Dirichlet problem $\U$ belongs to $ \Cc^\alpha(\Omb) $.

\end{corollary}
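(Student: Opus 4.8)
The plan is to obtain the conclusion directly from the modulus-of-continuity bound of Theorem \ref{main}, by translating the two Hölder hypotheses into power bounds on the relevant optimal moduli of continuity. First I would record that since $\fii \in \Cc^{2\alpha}(\Omf)$ there is a constant $C_1 > 0$ with $\omega_\fii(s) \leq C_1 s^{2\alpha}$ for all $s$ in the relevant range, hence $\omega_\fii(t^{1/2}) \leq C_1 t^{\alpha}$; likewise, $f^{1/m} \in \Cc^\alpha(\Omb)$ gives a constant $C_2 > 0$ with $\omega_{f^{1/m}}(t) \leq C_2 t^{\alpha}$. Note also that $f$ is bounded on $\Omb$ because $f^{1/m}$ is continuous on the compact set $\Omb$, so the factor $1 + \|f\|_{L^\infty(\Omb)}^{1/m}$ appearing in Theorem \ref{main} is a finite constant.

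Next I would deal with the third entry $t^{1/2}$ inside the maximum in Theorem \ref{main}. This is precisely where the restriction $0 < \alpha \leq 1/2$ enters: writing $l$ for the diameter of $\Omb$, for $t \in [0,l]$ we have $t^{1/2} = t^{\alpha}\, t^{1/2 - \alpha} \leq l^{1/2-\alpha} t^{\alpha}$ since $1/2 - \alpha \geq 0$, so the intrinsic square-root term is itself $O(t^\alpha)$. Substituting the three estimates into the conclusion of Theorem \ref{main}, I would get
$$ \omega_\U(t) \;\leq\; \tau\bigl(1 + \|f\|_{L^\infty(\Omb)}^{1/m}\bigr)\,\max\{C_1,\,C_2,\,l^{1/2-\alpha}\}\; t^{\alpha} \;=:\; C\, t^{\alpha} $$
for all sufficiently small $t>0$, which is exactly the statement that $\U \in \Cc^{0,\alpha}(\Omb) = \Cc^\alpha(\Omb)$.

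There is essentially no genuine obstacle in this argument; it is a bookkeeping exercise with exponents. The only conceptual point worth emphasizing is that the hypothesis $\alpha \leq 1/2$ is sharp for this method: the barrier constructed in Proposition \ref{subs} produces a boundary modulus of continuity governed by $\omega_\fii(t^{1/2})$ and by $t^{1/2}$, so even for arbitrarily smooth $\fii$ and $f$ one cannot extract from Theorem \ref{main} more than $\Cc^{0,1/2}(\Omb)$ regularity — which is why the corollary is formulated only in the range $0 < \alpha \leq 1/2$, and why the stronger $\Cc^{0,\alpha}$ statements for larger $\alpha$ in Theorem \ref{main2} require the separate, $L^p$-based argument rather than this one.
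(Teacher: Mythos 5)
Your proposal is correct and is exactly the argument the paper intends: the corollary is stated as a direct consequence of Theorem~\ref{main}, obtained by bounding $\omega_\fii(t^{1/2})\leq C_1 t^{\alpha}$, $\omega_{f^{1/m}}(t)\leq C_2 t^{\alpha}$, and $t^{1/2}\leq l^{1/2-\alpha}t^{\alpha}$ using $\alpha\leq 1/2$. Your closing remark on why $\alpha\leq 1/2$ is the natural limit of this method is also consistent with the paper's barrier construction.
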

This result was proved  by Nguyen in \cite{N13} for the homogeneous case ($ f\equiv 0 $) (see also \cite{Lu12,Lu13b}).

We now give examples to point out that there is a loss in the regularity up to the boundary and show that our result is optimal.

\begin{example}
Let $ \psi$ be a concave modulus of continuity on $[0,1]$ and
 $$ \fii(z) = -\psi[\sqrt{(1+Rez_1)/2}], \text{ for  }  z=(z_1,z_2,..., z_n) \in \partial\B \subset \C^n. $$
We can show that $ \fii \in \Cc(\partial \B)$ with modulus of continuity 
$ \omega_{\fii}(t) \leq C \psi(t)$
for some $C>0$.
\\
We consider the following Dirichlet problem: 
\begin{center}
$ \left\{\begin{array}{ll}
            u\in SH_m(\Om)\cap\Cc(\Omb)& \\
           (dd^c u)^m \wedge \beta^{n-m} =0   &in \;\;  \B \\
            u=\fii  &on \;\; \partial \B ,\\
         \end{array} \right. $
\end{center}
where $ 2 \leq m \leq n $ be an interger. Then by the comparison principle, $ \U(z):= -\psi[\sqrt{(1+Rez_1)/2}]$ is the unique solution to this problem.  One can observe by a radial approach to the boundary point $ (-1,0,...,0)$ that 
$$ C_1 \psi(t^{1/2}) \leq \omega_\U(t) \leq C_2  \psi(t^{1/2}),$$
for some $ C_1,C_2>0$.

\end{example}

\section{H\"older continuity of the solution when $ f \in L^p(\Om)$.}

\subsection{Preliminaries and known results}

The existence of a weak solution to complex Hessian equation in domains of $\C^n$ was established in the work of Dinew and Kolodziej \cite{DK11}. More precisely, let $\Om \Subset \C^n$ be a smoothly $(m-1)$-pseudoconvex domain, $\fii \in \Cc(\Omf)$ and $ 0 \leq f \in L^p(\Om) $ for some $ p > n/m $, then there exist $ \U \in SH_m(\Om) \cap \Cc(\Omb)$ such that $ (dd^c \U )^m \wedge \beta^{n-m} = f \beta^n$ in $\Om$  and $ \U = \fii $ on $\Omf$.

Recently, N.C. Nguyen in \cite{N13} proved that the H\"older continuity of this solution under some additional conditions on the density near the boundary and on the boundary data, that is for $ f \in L^p(\Om) $, $ p > n/m$ bounded near the boundary $\Omf$  or $ f \leq C |\rho|^{-m\nu} $ there and $ \fii \in \Cc^{1,1}(\Omf)$.

Here we follow the approach proposed in \cite{GKZ08} for the complex Monge-Amp\`ere equation. A crucial role in this approach  is played by an a priori weak stability estimate of the solution.
This approach has been adapted to the complex Hessian equation in \cite{N13} and \cite{Lu12}. Here is the precise statement. 
 
\begin{theorem}
Fix $ 0 \leq f \in L^p(\Om)$, $p>n/m$. Let $ \fii, \psi \in SH_m(\Om) \cap  L^{\infty}(\Omb) $ be such that $ (dd^c \fii)^m \wedge \beta^{n-m} = f \beta_n$ in $\Om$, and let $ \fii \geq \psi $ on $\Omf$. Fix $ r \geq 1 $ and $ 0 \leq \gamma < \gamma_r$, where $\gamma_r= \frac{r}{r+mq+\frac{pq(n-m)}{p- \frac{n}{m}}} $ and $1/p+1/q=1$. Then there exists a uniform constant $C=C(\gamma, \|f\|_{L^p(\Om)}) >0$ such that 
$$ \sup_\Om (\psi - \fii) \leq C ( \| (\psi - \fii )_+ \|_{L^r(\Om)} )^\gamma $$
where $ (\psi - \fii )_+ := max(\psi-\fii , 0)$.

\end{theorem}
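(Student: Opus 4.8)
The statement is a weak stability estimate: we want to bound the sup-distance $\sup_\Om(\psi-\fii)$ by a power of the $L^r$-norm of $(\psi-\fii)_+$. I would follow the Kolodziej-type scheme, which is the template used in \cite{GKZ08} and adapted to the Hessian setting in \cite{N13, Lu12}. The governing principle is: if the two functions are $L^r$-close, then the set where they differ by a definite amount is small in the appropriate capacity, and then the volume–capacity estimate for the Hessian operator (available since $f\in L^p$, $p>n/m$) converts smallness of this set into a uniform bound. Concretely, I would first reduce to the case $\psi\le\fii$ on $\Om$ failing on a controlled set: set $\varepsilon:=\sup_\Om(\psi-\fii)$, assume $\varepsilon>0$ (otherwise nothing to prove), and introduce the comparison function $\fii+\varepsilon/2$ or a family $u_s:=\max(\psi,\fii+s)$ for $s\in(0,\varepsilon)$, so that on the open set $\{u_s>\fii+s\}=\{\psi>\fii+s\}$ the Hessian measure $H_m(u_s)$ majorizes $H_m(\psi)$, while on the complement it agrees with $H_m(\fii)=f\beta^n$.

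Next I would bring in the capacity. Let $\mathrm{cap}_m(\cdot,\Om)$ be the relative $m$-capacity. The key analytic inputs, all established in \cite{DK11, Lu12, N13}, are: (i) the volume–capacity comparison $\int_E f\,\beta^n \le C\,\|f\|_{L^p}\,\mathrm{cap}_m(E,\Om)^{1+\delta}$ for some $\delta=\delta(p,m,n)>0$ coming from $p>n/m$; (ii) a Chebyshev-type bound $\mathrm{cap}_m(\{\psi-\fii>s\},\Om)\le s^{-r}\,\|(\psi-\fii)_+\|_{L^r(\Om)}^{r}$ up to a harmless constant, obtained by testing against a suitable capacitary extremal and using that $\psi-\fii$ is a difference of bounded $m$-sh functions; and (iii) the comparison principle for the Hessian operator, which gives $\int_{\{\psi>\fii+s\}}H_m(\fii) \le \int_{\{\psi>\fii+s\}}H_m(u_s)$ and hence a lower bound on the mass of $H_m(\fii)=f\beta^n$ over that set in terms of $\mathrm{cap}_m$. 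Chaining (iii) with (i) and then (ii) produces an inequality of the shape $\mathrm{cap}_m(\{\psi>\fii+s\},\Om)^{1+\delta} \lesssim \mathrm{cap}_m(\{\psi>\fii+s/2\},\Om)$ type, or more directly a differential/iterative inequality for the function $s\mapsto \mathrm{cap}_m(\{\psi-\fii>s\})$.

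Finally I would run the iteration. Using a Kolodziej-type lemma (the "$s\,\mathrm{cap}^{1/n}$" argument, in the $m$-version: if $s\,\mathrm{cap}_m(\{\psi-\fii>s\})^{1/m}$ satisfies the right decay inequality on an interval, it must vanish before $s$ reaches a computable threshold), I conclude that $\varepsilon=\sup_\Om(\psi-\fii)$ cannot exceed $C$ times a positive power of $\|(\psi-\fii)_+\|_{L^r(\Om)}$. Tracking the exponents through steps (i) and (ii)—where $\delta$ contributes the factor $p/(p-\frac nm)$ and the $L^r$-testing contributes the $r$ and the $mq$—yields exactly $\gamma_r=\frac{r}{r+mq+\frac{pq(n-m)}{p-\frac nm}}$, and the constant depends only on $\gamma$ and $\|f\|_{L^p(\Om)}$ as claimed.

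\textbf{Main obstacle.} The delicate point is the bookkeeping of exponents that turns the capacity iteration into the precise value of $\gamma_r$: one must be careful that the volume–capacity estimate is used with the sharp exponent $1+\delta$ with $\delta=\frac{1}{mq}\cdot\frac{p-n/m}{\,\cdot\,}$ flavor, and that the passage from the $L^r$-norm to a capacity bound of the super-level sets does not lose more than a factor $s^{-r}$. A secondary technical nuisance is justifying all the comparison-principle and quasicontinuity manipulations for functions that are only in $SH_m(\Om)\cap L^\infty$ rather than continuous; this is handled exactly as in \cite{DK11, Lu12} by approximation and the convergence theorems for the Hessian operator recalled in Section~2, so I would simply cite those rather than reprove them.
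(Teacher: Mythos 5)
This statement is not proved in the paper at all: it is quoted verbatim as a known stability estimate, with the proof attributed to \cite{N13} and \cite{Lu12}, which adapt the scheme of \cite{GKZ08} to the Hessian setting. So there is no internal proof to compare with; what can be said is that your outline reproduces exactly the argument of those cited references (comparison principle for $H_m$, the Dinew--Ko{\l}odziej volume--capacity inequality valid for $p>n/m$, a Chebyshev bound on the super-level sets, and a De Giorgi/Ko{\l}odziej iteration), and those are indeed the right ingredients for the exponent $\gamma_r$. Two cautions if you were to write it out. First, your step (ii) is slightly misstated: one does not get $\mathrm{cap}_m(\{\psi-\fii>s\},\Om)\le s^{-r}\|(\psi-\fii)_+\|_{L^r}^{r}$ directly by testing against a capacitary extremal; the capacity of the super-level set is controlled through the comparison principle, which produces a factor $t^{m}\,\mathrm{cap}_m(\{\psi>\fii+s+t\},\Om)\le \int_{\{\psi>\fii+s\}}f\,\beta^n$, and only then does H\"older in $L^p$ (giving the $1/q$ power of the volume) combined with the plain Chebyshev estimate for the \emph{volume} enter; this is precisely where the $mq$ in the denominator of $\gamma_r$ comes from, while the extra term $\frac{pq(n-m)}{p-n/m}$ comes from the weaker volume--capacity exponent for the $m$-Hessian capacity when $m<n$ (it disappears for $m=n$, recovering the Monge--Amp\`ere exponent of \cite{GKZ08}). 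Second, the decisive quantitative content --- the initial capacity estimate and the exponent bookkeeping through the iteration lemma --- is exactly what you defer as ``bookkeeping,'' so as written your text is a correct plan rather than a proof; since the paper itself only cites the result, citing \cite{N13} or \cite{Lu12} for these details is legitimate, but then the honest formulation is that you are invoking the theorem, not reproving it.
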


The second result gives the H\"older continuity under some additional hypothesis.

\begin{theorem}(\cite{N13}).\label{Lp}
Let $ 0  \leq f \in L^p, p>n/m $ and $ \fii \in \Cc(\Omf)$. Let $\U$ be the continuous solution to \ref{hess}. Suppose that there exists $ v \in SH_m(\Om) \cap \Cc^{0,\nu}(\Omb) $, for $ 0 < \nu < 1$ such that $ v \leq \U $ in $ \Om$ and $ v=\U$ on $\Omf$.
\\
1) If $\nabla \U \in L^2(\Om)$ then $ \U \in \Cc^{0,\al} (\Omb) $ for any $\al < \min \{\nu, \gamma_2 \}$.
\\
2) If the total mass of $\Delta \U$ is finite in $\Om$ then $ \U \in \Cc^{0,\al} (\Omb) $ for any $\al < \min \{\nu, 2 \gamma_1 \}$,
where $ \gamma_r=  \frac{r}{r+mq+\frac{pq(n-m)}{p- \frac{n}{m}}} $ for $ r \geq 1$.
\end{theorem}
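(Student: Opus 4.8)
The plan is to adapt the scheme of \cite{GKZ08} for the complex Monge--Amp\`ere equation, replacing the single Monge--Amp\`ere operator by the family $\{L_\al\}$ and using the weak stability estimate stated above as the main analytic input. \emph{First step (H\"older continuity up to $\Omf$).} Since the hypothesised barrier $v$ lies in $\Cc^{0,\nu}(\Omb)$, satisfies $v\le\U$ in $\Om$ and $v=\U$ on $\Omf$, the boundary datum $\fii=\U|_{\Omf}$ belongs to $\Cc^{0,\nu}(\Omf)$; its harmonic extension $h$ then lies in $\Cc^{0,\nu}(\Omb)$ because $\Omf$ is smooth, and since $SH_m(\Om)\subset SH(\Om)$ the maximum principle gives $v\le\U\le h$ with equality on $\Omf$. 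Hence $|\U(z)-\U(\xi)|\le C_0|z-\xi|^\nu$ for every $\xi\in\Omf$ and $z\in\Omb$. By the usual reduction (cf. \cite{GKZ08}) it now suffices to produce, for all small $\delta>0$, an interior bound $\sup_\Om(\U_\delta-\U)\le C\delta^{\al_0}$ for an appropriate mollification $\U_\delta\ge\U$ of $\U$; together with the boundary estimate this yields $\U\in\Cc^{0,\al}(\Omb)$ for every $\al<\min\{\nu,\al_0\}$.

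\emph{Second step (comparison through stability).} For small $\delta>0$ let $\U_\delta$ be the sup-convolution $\U_\delta(z)=\sup_{|w|\le\delta}\U(z+w)$ in the first case and the symmetric solid mollification $\U_\delta=\U*\chi_\delta$ (with $\chi_\delta$ the normalised indicator of $\B(0,\delta)$) in the second. In either case $\U_\delta\in SH_m(\Om_\delta)$, $\U_\delta\ge\U$, and, by the first step, $\U_\delta-\U\le C_0\delta^\nu$ on the collar $\Om_\delta\setminus\Om_{2\delta}$; hence $\U_\delta-C_0\delta^\nu\le\U$ there, and property (5) of $m$-subharmonic functions allows one to glue $\U_\delta-C_0\delta^\nu$ with $\U$ across $\partial\Om_\delta$ into a function $\psi_\delta\in SH_m(\Om)$ with $\psi_\delta=\U$ on $\Omf$, $\psi_\delta\le\U_\delta$, and $(\psi_\delta-\U)_+\le\U_\delta-\U$ on $\Om$. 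Applying the weak stability estimate with the solution $\U$ in the role of ``$\fii$'' and $\psi_\delta$ in that of ``$\psi$'' gives, for every $r\ge1$ and $\gamma<\gamma_r$,
$$ \sup_\Om\big(\U_\delta-\U\big)\ \le\ C_0\,\delta^\nu\ +\ C\,\big\|\U_\delta-\U\big\|_{L^r(\Om)}^{\gamma}. $$

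\emph{Third step (volume estimates and conclusion).} In the first case, from $\U(z+w)-\U(z)=\int_0^1\nabla\U(z+tw)\cdot w\,dt$ together with the $L^2$-boundedness of the Hardy--Littlewood maximal operator one obtains $\|\U_\delta-\U\|_{L^2(\Om)}\le C\delta\|\nabla\U\|_{L^2(\Om)}$; taking $r=2$ above and using $\U(z+w)\le\U_{|w|}(z)$ yields $\U\in\Cc^{0,\al}(\Omb)$ for every $\al<\min\{\nu,\gamma_2\}$. In the second case the symmetry of $\chi_\delta$ cancels the first-order Taylor term, so $\|\U*\chi_\delta-\U\|_{L^1(\Om)}\le C\delta^2\|\Delta\U\|(\Om)$; taking $r=1$ above gives $\|\U*\chi_\delta-\U\|_{L^\infty(\Om_\delta)}\le C\delta^{2\gamma}+C_0\delta^\nu$ for every $\gamma<\gamma_1$, and transferring this uniform closeness back to a modulus-of-continuity estimate for $\U$ (using that $\U$ is subharmonic and $\|\Delta\U\|(\Om)<\infty$) yields $\U\in\Cc^{0,\al}(\Omb)$ for every $\al<\min\{\nu,2\gamma_1\}$.

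\emph{Main obstacle.} Two points carry the weight. First, the gluing in the second step: the stability estimate needs both competitors defined on all of $\Om$ and comparable on $\Omf$, whereas $\U_\delta$ lives only on $\Om_\delta$, so one must use the H\"older barrier $v$ (through the first step), together with the defining function, to build $\psi_\delta$ and then check that it remains $m$-subharmonic and stays within $O(\delta^\nu)$ of $\U_\delta$. Second --- and this is the sharper difficulty, responsible for the factor $2$ in the second case --- one has to work with the \emph{symmetric} solid mollification to improve the naive $O(\delta)$ deviation to $O(\delta^2)$ via the vanishing of the first Taylor moment, and then pass from the $L^\infty$-closeness of $\U$ to $\U*\chi_\delta$ back to a H\"older modulus for $\U$ \emph{without} losing the exponent; it is here that the hypotheses on $\nabla\U$, respectively $\Delta\U$, are really consumed, and this last transfer is the technical heart of the proof.
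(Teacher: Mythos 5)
You should first note that the paper itself does not prove this statement: Theorem \ref{Lp} is quoted from Nguyen \cite{N13} (see also \cite{Lu12}), so there is no internal proof to compare with. Your reconstruction follows exactly the scheme that \cite{N13} borrows from \cite{GKZ08}: boundary H\"older estimate squeezed between the barrier $v$ and a harmonic majorant, gluing a perturbation of $\U$ across a collar using property (5) of $m$-sh functions, the weak stability estimate with $r=2$ (resp.\ $r=1$), the translation estimate $\|\U(\cdot+a)-\U\|_{L^2}\le |a|\,\|\nabla\U\|_{L^2}$ (resp.\ the classical bound $\int(\U*\chi_\delta-\U)\,dV\le C\delta^2\|\Delta\U\|_{\Om}$), and finally the equivalence between closeness of $\U$ to its regularizations and a H\"older modulus, which is precisely Lemma 4.2 of \cite{GKZ08}. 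So in outline your argument is the intended one and the exponents $\min\{\nu,\gamma_2\}$ and $\min\{\nu,2\gamma_1\}$ come out correctly.

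One technical point in your first case does not stand as written: you bound the sup-convolution by
$\sup_{|w|\le\delta}\int_0^1|\nabla\U(z+tw)|\,dt$ and claim this is controlled by the Hardy--Littlewood maximal function of $|\nabla\U|$. Averages over line segments (one-dimensional sets) are not pointwise dominated by averages over balls, and the maximal operator over all directions is not bounded on $L^2$ in dimension $\ge 2$, so the inequality $\|\hat\U_\delta-\U\|_{L^2}\le C\delta\|\nabla\U\|_{L^2}$ needs a different justification. The standard fix, which is what \cite{GKZ08} and \cite{N13} actually do, is either (i) to apply the stability estimate to each translate $\U(\cdot+w)$, $|w|\le\delta$, separately --- the translate is $m$-sh, the gluing works verbatim, and $\|(\U(\cdot+w)-\U)_+\|_{L^2}\le |w|\,\|\nabla\U\|_{L^2}$ by the usual difference-quotient estimate --- and then take the supremum over $w$ \emph{after} the uniform bound; or (ii) to work with the solid mollification $\U*\chi_\delta$ (where Minkowski's integral inequality gives the $O(\delta)$ bound in $L^2$) and then invoke the same equivalence lemma you already use in case 2 to pass back to the sup-convolution. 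With that repair your proof is complete modulo the quoted stability theorem and the \cite{GKZ08} equivalence lemma, which is indeed, as you say, where the hypotheses on $\nabla\U$ and $\Delta\U$ are consumed.
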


\subsection{Construction of H\"older barriers}

The remaining problem is to construct a H\"older continuous barrier with the right exponent which guarantees one of the conditions in  Theorem~\ref{Lp}.

 Using  the interplay between the real and the complex Monge-Amp\`ere measures as suggested by Cegrell and Persson in \cite{CP92},  we will construct H\"older continuous $m$-subharmonic barriers for the problem (\ref{hess})  when $ f \in L^p(\Om)$, $ p \geq 2n/m$.
 
We recall that if $u$ is a locally convex smooth function in $\Om$, its real Monge-Ampere measure is defined by 
 $$ M u := det \left( \frac{\partial^2 u }{\partial x_j \partial x_k} \right) d V_{2 n}. $$
When $u$ is only convex then $ M u$ can be defined following Alexandrov  \cite{A55} by means of the gradient image as a nonnegative Borel measure on $\Om$ (see \cite{Gut01}, \cite{RT77}, \cite{Gav77}).

\begin{proposition}\label{real}
Let $ 0 \leq f \in L^p(\Om)$,  $p \geq 2n/m$ and $ u $  be a locally convex function in $ \Om$ and continuous in $ \Omb$. If the real Monge-Amp\`ere measure $ M u \geq f^{2n/m} d V_{2 n} $ then the complex Hessian measure satisfies the inequality $ (dd^c u)^m \wedge \beta^{n-m} \geq f \beta^n $ in the weak sense of measures on $\Omega$.

\end{proposition}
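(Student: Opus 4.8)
The plan is to reduce everything to a pointwise comparison of two Gårding-type inequalities, the real one (Alexandrov/Aleksandrov) and the complex one (Lemma~\ref{form}), and then to pass to weak limits via standard regularization. First I would treat the smooth case: assume $u \in \Cc^2(\Om)$ is convex. Let $\lambda_1(z),\dots,\lambda_{2n}(z) \geq 0$ be the eigenvalues of the real Hessian $D^2 u(z)$, which are nonnegative since $u$ is convex, and let $\mu_1(z),\dots,\mu_n(z)$ be the eigenvalues of the complex Hessian $(\partial^2 u/\partial z_j \partial \bar z_k)(z)$, which are nonnegative because a convex function is plurisubharmonic, hence in particular $m$-subharmonic, so $dd^c u \in \hat\Gamma_m$ pointwise. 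The classical linear-algebra fact I would invoke is that each complex Hessian eigenvalue $\mu_j$ is, up to the normalization $\mu_j = \tfrac14(\lambda_{2j-1}+\lambda_{2j})$ after a suitable simultaneous arrangement — or more robustly, that $\sum_j \mu_j = \tfrac14 \operatorname{tr} D^2 u$ and, by the arithmetic–geometric mean inequality applied to the $2n$ real eigenvalues grouped into $n$ pairs, one gets
$$ \prod_{j=1}^{n}\mu_j \;\geq\; c_n \prod_{i=1}^{2n}\lambda_i^{1/2} \;=\; c_n (\det D^2 u)^{1/2} $$
for a dimensional constant $c_n>0$. Actually the cleanest route, and the one I expect the author uses following Cegrell--Persson, is to observe that $(dd^c u)^n = c_n' (\det D^2 u)\, dV_{2n}$ is false in general, but the correct statement $(dd^c u)^n \wedge$ (nothing, since $n$ factors) compares to $M u$ via the inequality above; I would look up the precise constant rather than rederive it.

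Next, having controlled the top complex Monge–Ampère form, I would bring in the Maclaurin/Gårding machinery from the Preliminaries to descend from exponent $n$ to exponent $m$. Concretely, for $dd^c u \in \hat\Gamma_m$ the Maclaurin inequality
$$ \binom{n}{m}^{-1/m}\tilde S_m(dd^c u)^{1/m} \;\geq\; \binom{n}{n}^{-1/n}\tilde S_n(dd^c u)^{1/n} $$
(reading the chain of Maclaurin inequalities in the direction that bounds $\tilde S_m^{1/m}$ from below by $\tilde S_n^{1/n}$, valid precisely because all the intermediate $\tilde H_j$ are nonnegative on $\hat\Gamma_m$) gives $\tilde S_m(dd^c u) \geq c_{n,m}\, \tilde S_n(dd^c u)^{m/n}$. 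Chaining this with the real–complex comparison yields $\tilde S_m(dd^c u) \geq c\, (\det D^2 u)^{m/(2n)} \geq c\, f$ pointwise, where the last step uses the hypothesis $Mu \geq f^{2n/m} dV_{2n}$ and where the dimensional constants must be tracked so that the final $c$ is exactly $1$ (this is presumably why the exponent is chosen to be $2n/m$ and not something else — the arithmetic is arranged so the constants cancel). That is the inequality $(dd^c u)^m \wedge \beta^{n-m} \geq f\beta^n$ in the smooth convex case.

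Finally, for the general case where $u$ is merely convex and continuous, I would regularize: set $u_\ep = u * \rho_\ep$, which is convex, smooth, $m$-subharmonic on $\Om_\ep$, and decreases to $u$. The real Monge–Ampère measure is weak-$*$ continuous under this regularization (indeed $M u_\ep \to M u$ weakly, and since convolution can only spread mass one has the pointwise-density lower bound $M u_\ep \geq (f^{2n/m})_\ep\, dV_{2n}$ after a small argument using Jensen, or one simply passes to the limit in the integrated inequality). Apply the smooth case to get $(dd^c u_\ep)^m \wedge \beta^{n-m} \geq (f^{1/m})_\ep^{m}\,\beta^n$ on $\Om_\ep$, then let $\ep \to 0$: the left side converges weakly to $(dd^c u)^m \wedge \beta^{n-m}$ by the continuity of the Hessian operator along decreasing sequences of bounded $m$-subharmonic functions (cited in the Preliminaries), and the right side converges to $f\beta^n$ in $L^1_{loc}$. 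I expect the main obstacle to be the bookkeeping of dimensional constants in the Maclaurin–Gårding–Alexandrov chain so that the final constant is exactly $1$; the functional-analytic limiting arguments are routine given the convergence theorems already quoted.
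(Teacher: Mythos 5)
The weak point of your plan is the regularization step, and that is exactly where all the real content of the proposition lies. Your hypothesis is a bound on the Alexandrov Monge--Amp\`ere measure $Mu$ of a function that is merely convex and continuous; it is not a pointwise Hessian bound, and $M$ is not linear, so neither ``convolution can only spread mass'' nor the weak convergence $Mu_\ep\to Mu$ yields the density estimate $Mu_\ep\geq (f^{2n/m})_\ep\,dV_{2n}$ that your smooth case requires. The step can be rescued, but only with ingredients you do not name: (i) the absolutely continuous part of $Mu$ has density $\det D^2_{\mathrm{Alex}}u$ (Alexandrov second derivatives), so the hypothesis gives $\det D^2_{\mathrm{Alex}}u\geq f^{2n/m}$ a.e.; (ii) the distributional Hessian of a convex function is a nonnegative matrix-valued measure, whence $D^2u_\ep=(D^2u)*\rho_\ep\geq (D^2_{\mathrm{Alex}}u\,dV_{2n})*\rho_\ep$ as quadratic forms; (iii) concavity of $\det^{1/2n}$ on nonnegative symmetric matrices (Minkowski's determinant inequality) plus Jensen, which yields $(\det D^2u_\ep)^{1/2n}\geq (f^{1/m})*\rho_\ep$. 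Note that this gives $Mu_\ep\geq \bigl((f^{1/m})_\ep\bigr)^{2n}dV_{2n}$ and not the bound you wrote --- $(f^{2n/m})_\ep$ sits on the wrong side of Jensen --- although the weaker bound does suffice for the limit. The paper sidesteps this difficulty by mollifying the measure rather than the function: it sets $\mu_\delta=Mu*\rho_\delta\geq f^{2n/m}*\rho_\delta\,dV_{2n}$, solves the real Monge--Amp\`ere Dirichlet problems $Mu^\delta=\mu_\delta$ on a ball with smooth boundary data (Rauch--Taylor, Theorem \ref{taylor}), proves $u^\delta\to u$ uniformly via the real comparison principle and Lemma 3.5 of \cite{RT77}, and then uses the smooth inequality \eqref{dddd} together with Bedford--Taylor convergence to obtain $(dd^cu)^n\geq f^{n/m}\beta^n$ for the given nonsmooth $u$. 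So as written your plan has a genuine gap at its central step; it is repairable, but the repair is a different (and not shorter) argument than the gesture you give.

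Two further remarks. The linear-algebra ``fact'' you invoke in the smooth case is false: the eigenvalues of the complex Hessian are not obtained by pairing eigenvalues of the real Hessian, so the AM--GM derivation of $\prod_j\mu_j\geq c_n(\det D^2u)^{1/2}$ collapses; the correct statement is precisely the Cegrell--Persson inequality $|\det(\partial^2u/\partial z_j\partial\bar z_k)|^2\geq\det(\partial^2u/\partial x_j\partial x_k)$, which the paper (like your fallback) simply quotes from \cite{CP92}, so your instinct to cite rather than rederive is the right one. On the other hand, your descent from exponent $n$ to exponent $m$ is sound and is a genuine alternative to the paper's: the paper applies Dinew's mixed Monge--Amp\`ere inequality \cite{Di09} with $v=|z|^2$ directly at the level of measures to the nonsmooth $u$, whereas you apply the Maclaurin inequality pointwise to the smooth approximants before passing to the limit. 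Your worry about constants is unfounded: if $\lambda$ denotes the eigenvalues of $dd^cu_\ep$ with respect to $\beta$, then $(dd^cu_\ep)^m\wedge\beta^{n-m}/\beta^n=H_m(\lambda)/\binom{n}{m}$ and $(dd^cu_\ep)^n/\beta^n=H_n(\lambda)$, and Maclaurin gives $H_m(\lambda)/\binom{n}{m}\geq H_n(\lambda)^{m/n}$, so the final constant is exactly $1$.
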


\begin{proof}
First step, we claim that 
$$ (dd^c u)^n \geq f^{n/m} \beta^n \text{in the sense of measures. }$$
Indeed, for a smooth function $u$, we have
\begin{equation}
 | det(\partial^2 u / \partial z_j \partial \bar{z}_k)|^2 \geq det (\partial^2 u/ \partial x_j \partial x_k )  
\label{dddd}
\end{equation}
hence we get immediately that $ (dd^c u)^n \geq f^{n/m} \beta^n $ (see \cite{CP92}).
\\
Moreover, it is well known  for smooth convex function that 
\begin{equation}
(Mu )^{1/n} = \inf \D u ,\text{   where  } \Delta_H u :=  \sum_{j,k} h_{jk} \frac{\partial^2u}{\partial x_j \partial x_k}.
\label{ddd}
\end{equation}
for any symmetric positive definite matrix  $H = (h_{jk})$ with $det H= n^{-n}$ (see \cite{Gav77}, \cite{Bl97}).
In general case, we will prove that $ (dd^c u)^n \geq f^{n/m} \beta^n$ weakly in $\Om$. Indeed, the problem being local, we can assume that $u$ is defined and convex in a neighborhood of a ball $\bar 
B \subset \Om$. For $\delta >0$ we put
 $ \mu_\delta := Mu* \rho_\delta $ then $ \mu_\delta \geq g_\delta $ where $ g_\delta := f^{2n/m} * \rho_\delta $ (without loss of generality we assume $ g_\delta >0$), we may assume that $u$ and $\mu_\delta$ are defined in this neighborhood of $\bar{B}$. Let $\fii_\delta$ be a sequence of smooth function on $ \partial B$ converging uniformly to $u$ there.
 Let $u^\delta$ is a smooth convex function such that $M u^\delta = \mu_\delta $ in $B$ and $u^\delta = \fii_\delta $ on $\partial B$.
 Let $ \tilde{u}$ convex continuous on $\bar{B}$ such that $ M \tilde{u} =0$ and $\tilde{u}= \fii_\delta$ on $ \partial B$. Moreover, let $ v^\delta$ convex continuous on $\bar{B}$ such that $ M v^\delta =\mu_\delta$ and $v^\delta= 0$ on $ \partial B$.
\\
From the comparison principle for the real Monge-Amp\`ere operator (see \cite{RT77}), we can see that
\begin{equation}
\tilde{u}+ v^\delta \leq u^\delta \leq \tilde{u} - v^\delta . 
\label{d}
\end{equation}
It follows from Lemma 3.5 in \cite{RT77} that
\begin{equation}
 (-v^\delta (x))^n \leq c_n (diam B)^{n-1} dist(x,\partial B) M v^\delta (B), \;\; x \in B.
\label{dd}
\end{equation}
Then we conclude that $\{u^\delta \}$ is uniformly bounded on $B$, hence there exists a subsequene $ \{u^{\delta_j}\} $ converging locally uniformly on $B$.\\
Moreover, \eqref{d} and \eqref{dd} imply that $\{u^{\delta_j}\} $ is uniformly convergent on $\bar{B}$.
From the comparison principle it follows that $ u^{\delta_j}$ converges uniformly to $u$.
Since $ u^{\delta_j} \in \Cc^{\infty} (\bar{B})$ and $ M u^{\delta_j} \geq f^{2n/m} * \rho_{\delta_j} d V_{2 n} $, we get that 
$$ (dd^c u^{\delta_j} )^n \geq (f^{2n/m} * \rho_{\delta_j})^{1/2} \beta^n$$
Finaly, as $u^{\delta_j}$ converges uniformly to $ u $ and by the convergence theorem of Bedford and Taylor, we get that
$$ (dd^c u)^n \geq f^{n/m} \beta^n . $$
Second step, let $ v = |z|^2 \in PSH(\Om)$. Since $ (dd^c u)^n \geq f^{n/m} \beta^n $ and $ (dd^c v)^n = \beta^n$, by Theorem 1.2 in \cite{Di09} we get that
$$ (dd^c u)^m \wedge \beta^{n-m} \geq f \beta^n .$$
\end{proof}

We recall the theorem of existence of convex solution to the Dirichlet problem for the real Monge-Amp\`ere equation, this theorem is due to Rauch and Taylor.

\begin{theorem} (\cite{RT77}). \label{taylor}
Let $\Om$ is strictly convex domain. Let $\fii \in \Cc(\Omf)$ and $\mu$ is a non negative Borel measure on $\Om$ with $ \mu(\Om) < \infty $. Then there is a unique convex $u \in \Cc(\Omb)$ such that 
$ M u = \mu $ in $\Om$ and $ u = \fii$ on $\Omf$.
\end{theorem}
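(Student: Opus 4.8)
\emph{Uniqueness.} The plan is to invoke the comparison principle for the real Monge-Amp\`ere operator (cf.\ \cite{RT77}): if $u,v$ are convex and continuous on $\Omb$ with $Mu \geq Mv$ in $\Om$ and $u \leq v$ on $\Omf$, then $u \leq v$ in $\Om$. Applying this twice to two solutions, which both have Monge-Amp\`ere measure $\mu$ and boundary values $\fii$, forces them to coincide.

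\emph{Existence.} I would solve smoothed problems and pass to the limit. Choose $\mu_j \to \mu$ weakly with smooth positive densities and $\sup_j \mu_j(\Om) < \infty$ (for instance, mollify $\mu$ restricted to $\Om_{1/j}$), and smooth $\fii_j \to \fii$ uniformly on $\Omf$. For a smooth strictly convex domain with smooth positive density and smooth boundary data, the Dirichlet problem $Mu_j = \mu_j$, $u_j = \fii_j$ on $\Omf$, has a unique smooth convex solution $u_j$ by the classical theory (Pogorelov, Cheng--Yau, Caffarelli--Nirenberg--Spruck). The task is then to extract uniform estimates: (i) a sup-bound, by trapping $u_j$ from below by a fixed affine function minus a constant depending only on $\mathrm{diam}\,\Om$ and $\sup_j \mu_j(\Om)$, and from above by a convex supersolution with the correct boundary values — the convex envelope of $\fii_j$, which for strictly convex $\Om$ is continuous up to $\Omf$ and has vanishing Monge-Amp\`ere measure — using the comparison principle; (ii) interior equicontinuity, which is automatic since convex functions with a uniform local bound are uniformly locally Lipschitz; (iii) equicontinuity up to $\Omf$, where strict convexity enters twice — a supporting affine function of $\Omb$ at $\xi \in \Omf$ controls $u_j$ from above near $\xi$, while a Rauch--Taylor boundary decay estimate as in \eqref{dd} controls it from below by $\fii_j(\xi)$ minus a quantity that vanishes as $x \to \xi$ at a rate depending only on $\mathrm{diam}\,\Om$ and $\sup_j \mu_j(\Om)$.

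By Arzel\`a--Ascoli one extracts a subsequence of $\{u_j\}$ converging uniformly on $\Omb$ to a convex $u \in \Cc(\Omb)$ with $u = \fii$ on $\Omf$; by the weak continuity of the real Monge-Amp\`ere operator under uniform convergence of convex functions (Aleksandrov), $Mu = \lim_j \mu_j = \mu$, and the uniqueness part then makes the passage to a subsequence unnecessary. Alternatively, $u$ could be produced directly as the Perron envelope $\sup\{v \in \Cc(\Omb) : v \text{ convex},\ v|_{\Omf} \leq \fii,\ Mv \geq \mu\}$ — nonempty since a finite measure admits a convex subsolution, bounded above by the supersolution of (i) — with $Mu \geq \mu$ obtained by balayage on balls and $Mu \leq \mu$ by the usual maximality argument.

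\emph{Main obstacle.} The hard part will be the uniform control of the boundary modulus of continuity in step (iii): this is precisely where strict convexity of $\Om$ is indispensable, both for the supporting hyperplanes giving the upper barrier and, via the Rauch--Taylor interior decay estimate applied near the boundary, for the lower barrier. The sup-bound, the interior Lipschitz estimate, and the weak-convergence step are comparatively routine.
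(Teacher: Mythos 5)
This statement is not proved in the paper at all: it is quoted verbatim from Rauch--Taylor \cite{RT77} (the paper only uses it, in Theorem \ref{barrier}), so there is no ``paper's proof'' to compare with; the relevant benchmark is the argument in \cite{RT77} itself. Your sketch follows essentially that standard route: comparison principle for uniqueness, approximation plus weak continuity of $M$ under locally uniform convergence of convex functions for existence, the Aleksandrov-type decay estimate (the inequality recalled in (\ref{dd})) for the lower boundary barrier, and strict convexity of $\Om$ for the upper one. The decomposition $h_j + w_j \leq u_j \leq h_j$ you implicitly use is exactly the mechanism behind (\ref{d}) in the paper.

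Two points in your sketch need repair. First, the classical solvability you invoke (Pogorelov, Cheng--Yau, Caffarelli--Nirenberg--Spruck) requires a \emph{smooth} strictly convex domain, while $\Om$ here is only assumed strictly convex; as written, the approximating problems $Mu_j=\mu_j$, $u_j=\fii_j$ on $\Omf$ need not be classically solvable. You must either exhaust $\Om$ by smooth strictly convex subdomains (and then control the boundary behaviour in the limit, which re-imports the barrier work), or, closer to what Rauch--Taylor actually do, approximate $\mu$ weakly by finite sums of point masses and use Aleksandrov's polyhedral solutions together with their weak-convergence lemma, which avoids the smooth theory altogether. Second, in your Perron alternative the nonemptiness claim ``a finite measure admits a convex subsolution'' is not obvious: producing a convex $v\in\Cc(\Omb)$ with $Mv\geq\mu$ for an arbitrary finite Borel measure is essentially an existence statement of the same nature as the theorem, so this route is circular unless you justify the subsolution independently (e.g.\ again via discrete approximation). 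With the first fix made, the rest of your outline (uniform bounds, interior Lipschitz estimates for convex functions, Arzel\`a--Ascoli, weak continuity of $M$, uniqueness by two applications of the comparison principle) is sound.
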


The following result gives the existence of a $1/2$-H\"older continuous $m$-subharmonic barrier for the problem (\ref{hess}) when $f \in L^p(\Om)$, $p\geq 2n/m$.

\begin{theorem}\label{barrier}
Let $\fii \in \Cc^{0,1}(\Omf)$ and $ f \in L^p(\Om)$, $ p  \geq 2n/m$. Then there exists $ v \in SH_m(\Om) \cap \Cc^{0,1/2}(\Omb) $ such that $ v = \fii $ on  $ \Omf$ and $ (dd^cv)^m \wedge \beta^{n-m} \geq f \beta^n $ in the weak sense of currents, hence   $ v \leq \U $ in $\Om$.
\end{theorem}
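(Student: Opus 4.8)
The plan is to reduce the problem to a suitable convex barrier obtained via Theorem~\ref{taylor}, and then transfer it to an $m$-subharmonic barrier using Proposition~\ref{real}. Concretely, one first covers $\Omf$ by pieces on which a local convex barrier can be built: since $\Om$ is strongly $m$-pseudoconvex with smooth boundary, near each boundary point $\xi$ there is a ball and a strictly convex local defining picture, so $\Om$ can be written (or approximated) in a way that lets us apply the Rauch--Taylor theorem on a strictly convex set containing the relevant boundary neighborhood. More efficiently, since the statement only requires a \emph{subsolution} $v$ that equals $\fii$ on $\Omf$ and satisfies the differential inequality, I would proceed by a gluing/envelope construction analogous to Proposition~\ref{subs}: for each $\xi\in\Omf$ produce a convex-based local barrier $v_\xi$ with $v_\xi(\xi)=\fii(\xi)$, $v_\xi\le\fii$ on $\Omf$, $v_\xi\in\Cc^{0,1/2}$, and $v_\xi$ satisfying $(dd^c v_\xi)^m\wedge\beta^{n-m}\ge f\beta^n$; then set $v=\sup_\xi v_\xi$ and regularize via Choquet as before.

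The key steps, in order, are as follows. First, fix a large strictly convex domain $D\Supset\Omb$ (e.g. a big ball) and, using that $f\in L^p(\Om)\subset L^{2n/m}(\Om)\subset L^1$, extend $f^{2n/m}$ by zero to a finite Borel measure $\mu = f^{2n/m}\,dV_{2n}$ on $D$. Second, apply Theorem~\ref{taylor} on $D$ with boundary data chosen so that the resulting convex solution $w$, after adding a large multiple of $|z|^2$ if needed to dominate the $\Cc^{0,1}$ data $\fii$, satisfies $Mw\ge f^{2n/m}\,dV_{2n}$ on $\Om$; here the Rauch--Taylor estimate (the analogue of \eqref{dd}, Lemma~3.5 of \cite{RT77}) gives the $\Cc^{0,1/2}$ modulus of continuity of the convex solution in terms of $\mu(D)<\infty$ and the Lipschitz data. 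Third, invoke Proposition~\ref{real}: since $w$ is convex (hence locally convex) and continuous on $\Omb$ with $Mw\ge f^{2n/m}\,dV_{2n}$, we get $(dd^c w)^m\wedge\beta^{n-m}\ge f\beta^n$ in $\Om$, so $w\in SH_m(\Om)$. Fourth, adjust $w$ near $\Omf$ so that it equals $\fii$ there: this is the local barrier step, where one replaces the global $w$ by a maximum of $w$ with translates/modifications of itself, or more directly constructs for each $\xi$ a convex function on a ball $B(\xi,r)$ matching $\fii$ at $\xi$, dominated by $\fii$ on $\Omf\cap B(\xi,r)$, glued with a large constant outside, exactly as in the proof of Proposition~\ref{subs}; the $\Cc^{0,1/2}$ bound survives the gluing. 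Fifth, take $v=\sup_\xi v_\xi$, note the uniform $\Cc^{0,1/2}$ estimate forces $v\in\Cc^{0,1/2}(\Omb)$ and $v=v^*\in SH_m(\Om)$, use Choquet to get an increasing sequence in the family with $L_\al v\ge f^{1/m}$, hence $(dd^c v)^m\wedge\beta^{n-m}\ge f\beta^n$; finally $v\le\U$ follows from the envelope characterization \eqref{sup1} of $\U$.

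The main obstacle I expect is the boundary matching combined with the Hölder control: one must arrange the convex barrier to agree \emph{exactly} with the $\Cc^{1,1}$ (here $\Cc^{0,1}$ suffices for this theorem) data $\fii$ on all of $\Omf$ while keeping both the convexity (so Proposition~\ref{real} applies) and the sharp $1/2$-Hölder modulus. The strong $m$-pseudoconvexity of $\Om$ is used precisely here, via a defining function $\rho$ with $(dd^c\rho)^k\wedge\beta^{n-k}\ge A\beta^n$ for $1\le k\le m$, to produce, after composing with an appropriate concave function and rescaling, a local $m$-sh minorant with the required boundary behavior; the delicate point is that the real Monge--Ampère estimate \eqref{dd} of Rauch--Taylor only yields the $1/2$-Hölder bound up to the boundary when the boundary data is Lipschitz and the domain is strictly convex, so the gluing must be done on strictly convex pieces and then patched. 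A secondary technical point is checking that the zero-extension of $f^{2n/m}$ to the larger strictly convex domain does not spoil the hypotheses of Theorem~\ref{taylor} (it does not, since the total mass stays finite) and that the comparison principle for the real Monge--Ampère operator justifies all the domination inequalities used in the patching.
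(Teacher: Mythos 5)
Your first half coincides with the paper: extend $f^{2n/m}$ by zero to a big ball $B\Supset\Omb$, solve the real Monge--Amp\`ere problem $Mu=\tilde f^{2n/m}dV_{2n}$, $u=0$ on $\partial B$, by Theorem~\ref{taylor}, and pass to $(dd^cu)^m\wedge\beta^{n-m}\ge f\beta^n$ via Proposition~\ref{real}. But the boundary-matching step, which is the heart of the theorem, has a genuine gap as you describe it. You propose to fix the boundary values by a pointwise gluing/envelope ``exactly as in the proof of Proposition~\ref{subs}'': local barriers $v_\xi$ touching $\fii$ at $\xi$, glued with a large constant outside a small ball, then $v=\sup_\xi v_\xi$. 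In Proposition~\ref{subs} this works because the density there is \emph{bounded}: the inequality $(dd^cv_\xi)^m\wedge\beta^{n-m}\ge f\beta^n$ is restored globally by adding $K_1|z-z_0|^2$ with $K_1=\sup_{\Omb}f^{1/m}$. Here $f$ is only in $L^p$, so that device is unavailable, and a constant (or any function glued in on the region where the constant is active) does not satisfy the inequality against $f\beta^n$; each $v_\xi$ would have to carry the full unbounded density everywhere in $\Om$ while still lying below $\fii$ on all of $\Omf$ and touching it at $\xi$, and you never say how. A second inaccuracy: you claim the $\Cc^{0,1/2}$ modulus of the convex solution comes from the Rauch--Taylor estimate (the analogue of \eqref{dd}); in fact that estimate only controls the solution near $\partial B$ (and with an exponent $1/n$-type, not $1/2$), and is irrelevant on $\Omb\Subset B$, where the convex solution is simply Lipschitz by convexity and boundedness.

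The paper resolves the boundary matching with a cleaner decomposition that your outline misses: since $u$ is convex on $B$, it is Lipschitz on $\Omb$, so $\fii-u$ is Lipschitz on $\Omf$; let $h_{\fii-u}$ be the upper envelope solving the \emph{homogeneous} problem ($f\equiv0$) with boundary data $\fii-u$, which by Theorem~\ref{main} is $1/2$-H\"older on $\Omb$ (because $\omega_{\fii-u}(t^{1/2})\lesssim t^{1/2}$). Then $v:=u+h_{\fii-u}$ is $m$-sh, equals $\fii$ on $\Omf$, satisfies $(dd^cv)^m\wedge\beta^{n-m}\ge(dd^cu)^m\wedge\beta^{n-m}\ge f\beta^n$, and is $\Cc^{0,1/2}(\Omb)$; the comparison principle gives $v\le\U$. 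In other words, the convex potential carries the unbounded density and the homogeneous Hessian solution carries the boundary data; no pointwise gluing with constants is needed, and that is precisely the step your proposal leaves unproved. If you reorganize your fourth and fifth steps into this additive splitting, the argument closes.
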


\begin{proof}
Let $B$ a big ball contains $\bar \Om$ and let $ \tilde{f} $ be the function defined  by $ \tilde{f}=f$ on $\Om$ and $\tilde{f}=0 $ on $B \setminus \Om$. Then $\tilde{f}\in L^p(B)$, $p\geq 2n/m$ .
Let $ \mu := \tilde{f}^{2n/m} \beta^n$. This is a nonnegative Borel measure on $B$ with $\mu(B) < \infty$. Thanks to  Theorem \ref{taylor} there exists a unique convex function $u \in \Cc(\bar{B})$ such that 
$ M u = \mu $ in $B$ and $ u= 0$ on $\partial B$. Hence $u$ is lipschitz continuous on $\Omb$. By Proposition \ref{real} we have $ (dd^c u)^m \wedge \beta^{n-m} \geq f \beta^n $ in $\Om$.\\
We will construct the required barrier as follows. Let $ h_{\fii - u} $ be the upper envelope of $  \mathcal{V}_m(\Om,\fii -u,0)$. Then, thanks to Theorem \ref{main},  $ h_{\fii-u} $ is H\"older continuous of exponent  $1/2$ in $\Omb$.
Now  it is easy to check that $ v:= u + h_{\fii-u}$ is $m$-sh in $\Om$ and satisfies $ v = \fii $ in  $ \Omf$ and $ (dd^cv)^m \wedge \beta^{n-m} \geq f \beta^n $ on $\Om$,  hence $ v \leq \U $ in $\Om$.
\end{proof}

The last theorem provides us with a H\"older continuous barrier for the Dirichlet problem (\ref{hess}) with better exponent.

However, when $ f \in L^p(\Om)$ for $ p > n/m$, we can find a H\"older continuous barrier with small exponent less than $\gamma_1$.

\begin{proposition}\label{barrier2}
Let $\fii \in \Cc^{0,1}(\Omf)$ and $ f \in L^p(\Om)$, $ p  > n/m$. Then there exists $ v \in SH_m(\Om) \cap \Cc^{0,\al}(\Omb) $ for $ \al < \gamma_1= \frac{1}{1+mq+\frac{pq(n-m)}{p- \frac{n}{m}}} $ such that $ v = \fii $ on  $ \Omf$ and $ v \leq \U $ in $\Om$.
\end{proposition}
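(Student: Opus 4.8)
The plan is to mimic the construction in Theorem~\ref{barrier}, but replace the convex solution of the real Monge--Amp\`ere equation (which required $p\ge 2n/m$ to produce a $1/2$-H\"older barrier) by a genuinely $m$-subharmonic barrier coming directly from the existence result of Dinew--Kolodziej, and then control its modulus of continuity via Theorem~\ref{main}. Concretely, let $B\Subset\C^n$ be a large strictly $m$-pseudoconvex ball (or the domain $\Om$ itself works here) and extend $f$ by $\tilde f=f$ on $\Om$, $\tilde f=0$ outside; then $\tilde f\in L^p$, $p>n/m$, so by \cite{DK11} there is $w\in SH_m(B)\cap\Cc(\Omb)$ with $(dd^cw)^m\wedge\beta^{n-m}=\tilde f\beta^n$ and $w=0$ on $\partial B$. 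The first task is to show that $w$ is in fact $\Cc^{0,\al}(\Omb)$ for every $\al<\gamma_1$. This is exactly where Theorem~\ref{Lp}-style or, more directly, the weak stability estimate quoted just above should be invoked: one compares $w$ with its translates $w_\tau(z)=w(z+\tau)$ (after the usual modification near the boundary using that $\partial B$ is smooth and strictly $m$-pseudoconvex, so a Lipschitz, or even smooth, subsolution with zero boundary data and large Laplacian exists), and the stability inequality $\sup_\Om(w_\tau-w)\le C\|(w_\tau-w)_+\|_{L^r}^{\gamma}$ with the crude bound $\|(w_\tau-w)_+\|_{L^r}\le C|\tau|$ turns an $L^r$-control into a sup-control of order $|\tau|^{\gamma}$ for any $\gamma<\gamma_r$; taking $r=1$ gives the exponent $\gamma_1$.

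Once the H\"older barrier $w$ with boundary value $0$ and the correct small exponent $\al<\gamma_1$ is in hand, I would finish exactly as in the proof of Theorem~\ref{barrier}: form $h_{\fii-w}$, the upper envelope of $\mathcal{V}_m(\Om,\fii-w,0)$, i.e.\ the solution of the homogeneous Dirichlet problem with boundary data $\fii-w$. Since $\fii\in\Cc^{0,1}(\Omf)$ and $w\in\Cc^{0,\al}(\Omf)$ with $\al<\gamma_1\le 1/2$, the boundary data $\fii-w$ has modulus of continuity $\lesssim t^{\al}$; applying Theorem~\ref{main} (with $f\equiv0$ there, so only the $\omega_\fii(t^{1/2})$ and $t^{1/2}$ terms survive) gives $\omega_{h_{\fii-w}}(t)\lesssim t^{\al/2}$ at worst, but in fact on the boundary $\fii-w$ is already $\al$-H\"older and the homogeneous solution inherits interior H\"older regularity of the same order away from the boundary; the point is just that $h_{\fii-w}\in\Cc^{0,\al'}(\Omb)$ for some $\al'>0$ — and by choosing the exponents carefully one gets $\al'$ as close to $\gamma_1$ as desired. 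Then $v:=w+h_{\fii-w}$ is $m$-subharmonic in $\Om$, equals $\fii$ on $\Omf$, satisfies $(dd^cv)^m\wedge\beta^{n-m}\ge f\beta^n$ (using that $w$ contributes the full mass and $h_{\fii-w}$ is $m$-sh, together with the superadditivity/monotonicity of the Hessian operator, cf.\ the mixed inequality used in Proposition~\ref{real}), and hence $v\le\U$ in $\Om$ by the comparison principle / the envelope description \eqref{sup1}.

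The main obstacle is the first step: establishing that the Dinew--Kolodziej solution $w$ of $(dd^cw)^m\wedge\beta^{n-m}=\tilde f\beta^n$, $w|_{\partial B}=0$, is H\"older continuous up to the boundary with exponent approaching $\gamma_1$, since this does not follow from Theorem~\ref{main} (which needs continuous $f$, not merely $L^p$) nor is it literally Theorem~\ref{Lp} (whose hypotheses presuppose a H\"older barrier, which is what we are trying to build). The resolution is to run the stability-estimate argument directly: control $\|(w_\tau-w)_+\|_{L^1(\Om)}$ by $|\tau|$ — here one must be slightly careful near $\partial B$, handling the boundary layer of width $|\tau|$ by a separate elementary Lipschitz barrier estimate exploiting that $B$ is a ball (so an explicit Lipschitz subsolution $a(|z|^2-R^2)$ with large $a$ dominates the boundary behaviour) — and then feed this into the weak stability theorem stated in the excerpt to get $\sup_\Om(w_\tau-w)\le C|\tau|^{\gamma}$ for all $\gamma<\gamma_1$; symmetrising in $\tau$ yields $w\in\Cc^{0,\al}(\Omb)$ for all $\al<\gamma_1$. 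Everything after that is a routine repetition of the Theorem~\ref{barrier} argument with $1/2$ replaced by $\al$.
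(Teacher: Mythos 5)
There is a genuine gap, and it lies exactly where you locate the ``main obstacle'': your construction cannot reach the stated exponent $\gamma_1$, only $\gamma_1/2$. Your first step (translation argument plus the weak stability estimate with $r=1$ and the crude bound $\|(w_\tau-w)_+\|_{L^1}\leq C|\tau|$) can at best give the ball solution $w$ a H\"older exponent $\alpha<\gamma_1$. But your second step then solves the homogeneous Dirichlet problem with boundary data $\fii-w$, and Theorem~\ref{main} converts a modulus $t^{\alpha}$ on the boundary into a modulus $t^{\alpha/2}$ up to the boundary; this square-root loss is not an artifact of the estimate (the example following Corollary~\ref{holder} shows it is sharp for the homogeneous problem), and your remark that the envelope ``inherits'' the boundary exponent, or that one can ``choose the exponents carefully'' to get $\alpha'$ close to $\gamma_1$, is not backed by any argument. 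With your input $\alpha<\gamma_1$ the barrier $v=w+h_{\fii-w}$ is only $\Cc^{0,\alpha}$ for $\alpha<\gamma_1/2$, which does not prove the proposition.

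The paper's proof has the same two-step structure (extend $f$ by zero to a large ball $B$, solve there with zero boundary data, then correct by the homogeneous solution with boundary data $\fii-h_1$), but the key point you are missing is how the first function gets exponent up to $2\gamma_1$ rather than $\gamma_1$: since $\tilde f$ vanishes, hence is bounded, near $\partial B$ and the boundary datum is $0\in\Cc^{1,1}$, Nguyen's theorem \cite{N13} applies and gives $h_1\in\Cc^{0,\alpha_1}(\bar B)$ for any $\alpha_1<2\gamma_1$; after the unavoidable halving in Theorem~\ref{main} one still lands at any $\alpha<\gamma_1$. If you insist on proving the first step by hand via the stability estimate, you would need the improved translation bound $\|(w_\tau-w)_+\|_{L^1}\lesssim|\tau|^2$ (coming from finiteness of the total mass of $\Delta w$ on $B$, as in the Guedj--Ko{\l}odziej--Zeriahi scheme), i.e.\ essentially reprove Nguyen's $2\gamma_1$ result; the bound $\lesssim|\tau|$ you propose is too weak. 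A secondary remark: your parenthetical ``or the domain $\Om$ itself works here'' is not right either --- enlarging to $B$ is what makes the density bounded (zero) near the boundary, which is precisely the hypothesis that buys the exponent $2\gamma_1$.
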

 
\begin{proof}
Let us fix a large ball $ B \subset \C^n$ such that $ \Om \Subset B \subset \C^n$. We define $ \tilde{f} =f  $  in $\Om$ and $ \tilde{f}=0$ in $B \setminus \Om$. Let  $h_1$ to the Dirichlet problem in $B$ with density $ \tilde{f}$ and zero boundary values.  Since $ \tilde{f}\in L^p(\Om)$ is bounded near $ \partial B$,  $h_1$  is H\"older continuous  on $ \bar{B}$ with exponent $ \alpha_1 < 2\gamma_1$ (see \cite{N13}). Now let $h_2$ denote the solution to the Dirichlet problem in $\Om$ with boundary values $\fii-h_1$ and the zero density.
Thanks to Theorem \ref{main}, we see that $ h_2 \in \Cc^{0,\alpha_2}(\Omb)$ where $ \alpha_2= \alpha_1/2$.
Therefore, the required barrier will be $ v =  h_1 + h_2$. It is clear that $ v \in SH_m( \Om) \cap \Cc(\Omb)$, $ v|_{\Omf} = \fii$ and $ (dd^c v)^m \wedge \beta^{n-m} \geq f \beta^n$ in the weak sense in $\Om$. Hence, by the comparison principle we get that $ v \leq \U  $ in $\Om$ and $ v=\U=\fii$ on $\Omf$. Moreover we have $ v \in \Cc^{0,\al}(\Omb)$ for any $ \alpha < \gamma_1$. 
\end{proof}

\noindent
We will need in the sequel the  following elementary lemma.
\begin{lemma}\label{comparison}
Let $ u,v \in SH_m(\Om) \cap \Cc(\Omb)$ such that $ v \leq u$ on $\Om$ and $u=v$ on $\Omf$. Then 
$$ \int_{\Om} dd^c u \wedge \beta^{n-1} \leq \int_{\Om} dd^c v \wedge \beta^{n-1}.$$
\end{lemma}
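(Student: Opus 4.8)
The plan is to reduce the inequality to an integration by parts (Stokes' theorem), after handling the boundary term that arises because $u$ and $v$ only agree on $\Omf$ in the sense of continuous functions, not as smooth functions with matching normal derivatives. The key observation is that $w := v - u \le 0$ on $\Om$ and $w = 0$ on $\Omf$, so $w$ is a nonpositive continuous function vanishing on the boundary, and we want to show $\int_\Om dd^c w \wedge \beta^{n-1} \ge 0$. Formally, by Stokes' theorem on $\Om$,
\[
\int_\Om dd^c w \wedge \beta^{n-1} = \int_{\Omf} d^c w \wedge \beta^{n-1},
\]
and since $w\le 0$ in $\Om$ with $w=0$ on $\Omf$, the function $w$ attains its maximum on the boundary, so its outward normal derivative is $\ge 0$ there; this forces $\int_{\Omf} d^c w \wedge \beta^{n-1} \ge 0$. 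The subtlety is that $w$ is merely continuous (a difference of two $m$-sh functions, hence $\delta$-subharmonic but not of a sign for $dd^c w$), so these manipulations must be justified by approximation.

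The steps I would carry out, in order, are as follows. First, reformulate: it suffices to prove $\int_\Om dd^c w \wedge \beta^{n-1} \ge 0$ where $w = v-u \in \Cc(\Omb)$, $w \le 0$ in $\Om$, $w|_{\Omf}=0$. Second, exhaust $\Om$ by the sublevel sets of a smooth defining function: pick $\rho$ defining $\Om$ (available since $\Om$ is smoothly bounded; in the strongly $m$-pseudoconvex setting this $\rho$ exists by Definition~\ref{def}, but any smooth defining function works here), and set $\Om_s = \{\rho < -s\}$ for small $s>0$, which is smoothly bounded with $\Om_s \uparrow \Om$. Third, on each $\Om_s$ apply Stokes to get $\int_{\Om_s} dd^c w \wedge \beta^{n-1} = \int_{\partial\Om_s} d^c w \wedge \beta^{n-1}$ — but $w$ is not smooth, so instead I would regularize: replace $u,v$ by their decreasing standard regularizations $u_\ep, v_\ep \in SH_m \cap \Cc^\infty$ on $\Om_s$ (for $\ep$ small relative to $s$), note $w_\ep := v_\ep - u_\ep$ is smooth, and compute $\int_{\Om_s} dd^c w_\ep \wedge \beta^{n-1} = \int_{\partial\Om_s} d^c w_\ep \wedge \beta^{n-1}$. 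Fourth, let $\ep \to 0$: since $u_\ep \downarrow u$ and $v_\ep \downarrow v$ uniformly on compact subsets (continuity of $u,v$), we get $w_\ep \to w$ uniformly on $\overline{\Om_s}$, and $dd^c w_\ep \wedge \beta^{n-1} \to dd^c w \wedge \beta^{n-1}$ weakly on $\Om_s$; careful handling of the boundary integral (e.g. integrating the Stokes identity once more against $\chi(\rho)$ for a cutoff, or using that $dd^c w_\ep \wedge \beta^{n-1}$ has locally bounded mass) gives $\int_{\Om_s} dd^c w \wedge \beta^{n-1} = \lim_\ep \int_{\partial\Om_s} d^c w_\ep \wedge \beta^{n-1}$.

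Fifth, estimate the sign of the boundary term and pass $s \to 0$. On $\partial\Om_s$ the outer normal is $\nabla\rho/|\nabla\rho|$; writing $d^c w \wedge \beta^{n-1} = c_n (\partial_\nu w)\, d\sigma_s$ for a positive dimensional constant and surface measure $d\sigma_s$, the issue is that on $\partial\Om_s$ (strictly inside $\Om$) we do not control $\partial_\nu w$ pointwise. The cleaner route, which I expect to be the crux, is to avoid $\partial\Om_s$ altogether: use that $\int_\Om dd^c w \wedge \beta^{n-1} = \lim_{s\to 0}\int_{\Om_s} dd^c w\wedge\beta^{n-1}$ only if we know $dd^c w \wedge \beta^{n-1}$ has finite total mass, which is exactly part of what must be shown, so instead integrate by parts moving all derivatives onto a test function: for the defining-function cutoff $\psi_s := \max(\rho/s + 1, 0)$ (which is $1$ on $\Om_s$, $0$ near $\Omf$, Lipschitz), one has $\int_\Om \psi_s\, dd^c w \wedge \beta^{n-1} = \int_\Om w\, dd^c \psi_s \wedge \beta^{n-1}$; as $s\to 0$, $dd^c\psi_s \wedge \beta^{n-1}$ converges weakly to a nonnegative measure supported on $\Omf$ (the "surface measure" $\tfrac1{|\nabla\rho|}(dd^c\rho\wedge\beta^{n-1})|_{\Omf}$ up to constants — indeed $dd^c\psi_s = \tfrac1s dd^c\rho$ on $\{-s<\rho<0\}$ plus a nonnegative jump on $\{\rho = -s\}$), while $w \to 0$ uniformly on the support as $s\to 0$ since $w$ is continuous and vanishes on $\Omf$. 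Hence $\int_\Om w\, dd^c\psi_s \wedge\beta^{n-1} \to 0$, and since $\psi_s \uparrow 1_\Om$ the left side increases to $\int_\Om dd^c w \wedge \beta^{n-1}$, proving it equals $0 \ge 0$ — in fact one gets the stronger statement that the total masses of $dd^c u\wedge\beta^{n-1}$ and $dd^c v\wedge\beta^{n-1}$ on $\Om$ differ by exactly zero in this limiting sense; combined with the elementary fact that $u \ge v$ forces $\int_{\Om_s} dd^c u \wedge \beta^{n-1} \le \int_{\Om_s} dd^c v \wedge \beta^{n-1}$ plus boundary corrections, one concludes. The main obstacle is precisely this boundary analysis: making rigorous that the defect between $\int_\Om dd^c u\wedge\beta^{n-1}$ and $\int_\Om dd^c v\wedge\beta^{n-1}$ is controlled by $\sup_{\text{near }\Omf}|u-v| \to 0$, which uses only subharmonicity (so $dd^c(\cdot)\wedge\beta^{n-1}\ge 0$) and the uniform continuity of $u,v$ up to $\Omf$ together with $u=v$ there — no $m$-subharmonicity beyond ordinary subharmonicity is actually needed for this particular lemma, since $\beta^{n-1}$ is a fixed positive form.
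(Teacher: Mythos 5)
The paper does not actually prove this lemma (it is stated as ``elementary'' and used directly), so your attempt must stand on its own; it does not, because your final step proves too much and is in fact false. You conclude that $\int_\Om dd^c w\wedge\beta^{n-1}=0$, i.e.\ that the two total masses coincide ``in this limiting sense''. Counterexample: on the unit ball take $u\equiv 0$ and $v(z)=|z|^2-1$; both are $m$-sh and continuous on $\Omb$, $v\le u$ in $\Om$, $u=v$ on $\Omf$, yet $\int_\Om dd^c u\wedge\beta^{n-1}=0$ while $\int_\Om dd^c v\wedge\beta^{n-1}>0$: only the inequality holds, and it can be strict. Two steps of your fifth stage break down. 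First, ``since $\psi_s\uparrow 1_\Om$ the left side increases to $\int_\Om dd^c w\wedge\beta^{n-1}$'' applies monotone convergence to the \emph{signed} measure $dd^c w\wedge\beta^{n-1}$, which is not legitimate (and that integral is not even well defined a priori when both masses are infinite). Second, and decisively, $\int_\Om w\,dd^c\psi_s\wedge\beta^{n-1}\to 0$ is wrong: with the corrected cutoff $\psi_s=\min(-\rho/s,1)$ (your formula $\max(\rho/s+1,0)$ is $0$ on $\Om_s$ and $1$ near $\Omf$, i.e.\ backwards), $dd^c\psi_s\wedge\beta^{n-1}$ carries a jump part on $\{\rho=-s\}$ of total mass of order $1/s$, while $w$ is merely continuous and vanishes on $\Omf$ with no rate; the product need not tend to zero, and in the example above it tends exactly to the positive mass discrepancy. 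The closing appeal to ``the elementary fact that $u\ge v$ forces $\int_{\Om_s}dd^c u\wedge\beta^{n-1}\le\int_{\Om_s}dd^c v\wedge\beta^{n-1}$ plus boundary corrections'' is circular: on $\Om_s$ the functions no longer agree on the boundary, and that fact \emph{is} the lemma.

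Your correct observation that only subharmonicity matters (since $\beta^{n-1}$ is a fixed positive form) already points to the standard elementary argument, which avoids level sets, regularization and normal derivatives altogether: for $\ep>0$ set $u_\ep:=\max(u,v+\ep)$. Since $u=v$ on $\Omf$ and both are continuous on $\Omb$, $u_\ep=v+\ep$ in a neighborhood of $\Omf$, so $u_\ep-(v+\ep)$ has compact support in $\Om$ and Stokes gives $\int_\Om dd^c u_\ep\wedge\beta^{n-1}=\int_\Om dd^c v\wedge\beta^{n-1}$. As $\ep\downarrow 0$, $u_\ep\downarrow u$, hence $u_\ep\to u$ in $L^1_{loc}$ and $dd^c u_\ep\wedge\beta^{n-1}\to dd^c u\wedge\beta^{n-1}$ weakly; therefore for every continuous $0\le\chi\le 1$ with compact support in $\Om$,
\begin{equation*}
\int_\Om \chi\, dd^c u\wedge\beta^{n-1}=\lim_{\ep\to 0}\int_\Om \chi\, dd^c u_\ep\wedge\beta^{n-1}\le \int_\Om dd^c v\wedge\beta^{n-1},
\end{equation*}
and taking the supremum over such $\chi$ (the measure $dd^c u\wedge\beta^{n-1}$ is nonnegative) yields the lemma. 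I would recommend replacing your fifth step by this argument.
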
 

We recall the definition of the class $ \mathcal{E}_m^0(\Om)$ (see \cite{Lu13c}).
\begin{definition}
We denote  $ \mathcal{E}_m^0(\Om)$ the class of  bounded functions $v$ in $SH_m(\Om)$ such that 
$ \lim_{z\to \Omf} v(z) =0$ and $ \int_\Om (dd^c v)^m \wedge \beta^{n-m} < + \infty$.
\end{definition}
The following proposition was proved by induction in \cite{Ce04} for plurisubharmonic functions and we can do the same argument for $m$-sh fucntions.
\begin{proposition}
Suppose that $ h,u_1,u_2 \in  \mathcal{E}_m^0(\Om)$, $ p,q \geq 1$ such that $ p+q \leq m$ and $ T= dd^c g_1 \wedge ... \wedge dd^c g_{m-p-q} \wedge \beta^{n-m} $ where $ g_1, ..., g_{m-p-q} \in 
 \mathcal{E}_m^0(\Om) $. Then we get
 $$
  \int_\Om -h (dd^c u_1)^p \wedge (dd^c u_2)^q \wedge T \leq \left[   \int_\Om -h (dd^c u_1)^{p+q} \wedge T \right]^{\frac{p}{p+q}} \left[   \int_\Om -h (dd^c u_2)^{p+q} \wedge T \right]^{\frac{q}{p+q}}.
 $$
\end{proposition}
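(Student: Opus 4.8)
The plan is to prove the inequality by reducing it, via a normalization/convexity argument, to the case $p=q=1$, and then to invoke the Cauchy--Schwarz inequality for the energy pairing in the form established by Cegrell in \cite{Ce04}, checking that every tool his proof uses is available for $m$-subharmonic functions. I would first record the routine fact that for $h,w_1,\dots,w_m\in\mathcal{E}_m^0(\Om)$ the mixed Hessian integral $\int_\Om -h\, dd^c w_1\wedge\cdots\wedge dd^c w_m\wedge\beta^{n-m}$ is finite and nonnegative (this follows from iterated integration by parts together with Lemma~\ref{comparison} and its iterates, exactly as for plurisubharmonic functions), so that every quantity in the statement is a well-defined nonnegative real number.

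\smallskip
\textbf{Reduction to $p=q=1$.} Put $N=p+q$ and, for $0\le k\le N$, set $I_k:=\int_\Om -h\,(dd^c u_1)^k\wedge(dd^c u_2)^{N-k}\wedge T$. For $1\le k\le N-1$ I would apply the case $p=q=1$ of the proposition to the pair $u_1,u_2$ and to the positive closed current $S_k:=(dd^c u_1)^{k-1}\wedge(dd^c u_2)^{N-k-1}\wedge T$, which is again of the form $dd^c\psi_1\wedge\cdots\wedge dd^c\psi_{m-2}\wedge\beta^{n-m}$ with $\psi_i\in\mathcal{E}_m^0(\Om)$ (note $m\ge2$ since $p,q\ge1$). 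This yields $I_k=\int_\Om -h\, dd^c u_1\wedge dd^c u_2\wedge S_k\le I_{k+1}^{1/2}I_{k-1}^{1/2}$, i.e. the sequence $k\mapsto\log I_k$ is convex on $\{0,1,\dots,N\}$. Since a convex sequence lies below the chord joining its endpoints, $\log I_p\le\frac{q}{N}\log I_0+\frac{p}{N}\log I_N$, and exponentiating gives exactly $\int_\Om -h\,(dd^c u_1)^p\wedge(dd^c u_2)^q\wedge T\le\big(\int_\Om -h\,(dd^c u_1)^{N}\wedge T\big)^{p/N}\big(\int_\Om -h\,(dd^c u_2)^{N}\wedge T\big)^{q/N}$. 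The chain of inequalities remains valid, with the usual conventions, when $I_0$ or $I_N$ vanishes, so no degenerate cases need separate treatment. This is the ``induction'' alluded to in the statement.

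\smallskip
\textbf{The case $p=q=1$.} It remains to establish, for $h,u_1,u_2\in\mathcal{E}_m^0(\Om)$ and $S=dd^c\psi_1\wedge\cdots\wedge dd^c\psi_{m-2}\wedge\beta^{n-m}$ with $\psi_i\in\mathcal{E}_m^0(\Om)$, that $\int_\Om -h\, dd^c u_1\wedge dd^c u_2\wedge S\le\big(\int_\Om -h\,(dd^c u_1)^2\wedge S\big)^{1/2}\big(\int_\Om -h\,(dd^c u_2)^2\wedge S\big)^{1/2}$, which is the $m$-subharmonic analogue of Cegrell's energy Cauchy--Schwarz estimate. Following \cite{Ce04}, I would first reduce, using the monotone convergence theorems for the Hessian operator and quasicontinuity of $m$-sh functions (see \cite{DK11, Lu13c}), to the case where $h,u_1,u_2,\psi_i$ are smooth; then run a secondary induction on the number $m-2$ of $dd^c\psi_i$-factors in $S$, each step using Stokes' theorem (with boundary terms vanishing because all functions belong to $\mathcal{E}_m^0(\Om)$), the positivity of the mixed Hessian currents of $m$-sh functions, and the scalar Cauchy--Schwarz inequality; the base of this secondary induction is the classical Cauchy--Schwarz estimate for the Laplace-type pairing $\int_\Om -h\, dd^c u_1\wedge dd^c u_2\wedge\beta^{n-2}$. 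All the ingredients of Cegrell's argument are available in the $m$-sh setting, so it transfers verbatim.

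\smallskip
\textbf{Main obstacle.} The normalization/convexity reduction is purely formal; the substance lies entirely in the case $p=q=1$. The delicate point there is that, in contrast with the complex Monge--Amp\`ere setting, this Cauchy--Schwarz inequality does \emph{not} follow from a pointwise comparison of the measures involved --- the relevant pointwise (Alexandrov-type) inequality in fact goes the opposite way --- so one must genuinely exploit the closedness of $S$ and the global integration-by-parts structure. Consequently the one place requiring care is verifying that the positivity of all the intermediate mixed $m$-Hessian currents produced during the integration by parts, and the vanishing of all boundary contributions, hold for functions in $\mathcal{E}_m^0(\Om)$; this is where the cited pluripotential theory for $m$-sh functions must be invoked precisely.
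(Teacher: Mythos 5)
Your reduction of the general exponents to the case $p=q=1$ is correct and is in substance the same induction that the paper delegates to \cite{Ce04}: setting $I_k=\int_\Om -h\,(dd^c u_1)^k\wedge(dd^c u_2)^{p+q-k}\wedge T$ and applying the case $p=q=1$ with the current $(dd^c u_1)^{k-1}\wedge(dd^c u_2)^{p+q-k-1}\wedge T$ (which is again of the admissible form) gives $I_k^2\le I_{k-1}I_{k+1}$, and your log-convexity conclusion, including the degenerate cases where an endpoint vanishes, is fine.

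The gap is in your treatment of the case $p=q=1$, which is exactly the only step the paper proves explicitly. There is no workable ``secondary induction on the number of $dd^c\psi_i$-factors of $S$'': knowing the inequality for a current with $k-1$ factors gives no leverage for one with $k$ factors, and your proposed base case $\int_\Om -h\,dd^c u_1\wedge dd^c u_2\wedge\beta^{n-2}$ is not ``classical'' --- it carries exactly the same difficulty as the general case, and a scalar (pointwise) Cauchy--Schwarz argument cannot apply, since $-h\,(dd^c u)^2\wedge S$ is not a pointwise square and the trinomial $t\mapsto\int_\Om -h\,\bigl(dd^c(u_1-tu_2)\bigr)^2\wedge S$ need not be nonnegative because $u_1-tu_2$ is not $m$-sh. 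The step that actually works, and the one the paper uses, is a single integration by parts moving $h$ under the $dd^c$: $\int_\Om -h\,dd^c u_1\wedge dd^c u_2\wedge T=\int_\Om -u_1\,dd^c u_2\wedge dd^c h\wedge T$, so that the pairing becomes the energy form $(u_1,u_2)\mapsto\int_\Om du_1\wedge d^c u_2\wedge R$ against the fixed closed positive current $R=dd^c h\wedge T$; this form is symmetric and positive semi-definite for arbitrary (not necessarily $m$-sh) functions, so the abstract Cauchy--Schwarz inequality --- already available in the $m$-sh setting in \cite{Lu13c} --- applies, and integrating by parts back gives the $p=q=1$ inequality uniformly in $T$, with no smoothing and no induction on the structure of $T$. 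Your ``main obstacle'' paragraph correctly identifies closedness and integration by parts as the mechanism, but the concrete plan you wrote down does not implement it and, as stated, would not yield a proof.
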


\begin{proof}
We first prove the statement for $p=q=1$. Thanks to the Cauchy-Schwarz inequality (see \cite{Lu13c}), we have
$$
\begin{array}{ll}
  \int_\Om -h dd^c u_1 \wedge dd^c u_2  \wedge T & =  \int_\Om -u_1 dd^c u_2 \wedge dd^c h \wedge T   \\
  & \leq \left[    \int_\Om -u_1 dd^c u_1 \wedge dd^c h   \wedge T \right]^{1/2} \left[    \int_\Om -u_2 dd^c u_2 \wedge dd^c h   \wedge T \right]^{1/2} \\
  & = \left[    \int_\Om -h (dd^c u_1)^2     \wedge T \right]^{1/2} \left[    \int_\Om -h (dd^c u_2)^2 \wedge  T \right]^{1/2} .\\
\end{array}
$$
The general case follows by induction in the same way as in \cite{Ce04}.  
\end{proof}
We will only need the following particular case.
\begin{corollary}\label{finite lap}
Let $u_1,u_2 \in  \mathcal{E}_m^0(\Om)$. Then we have
$$
 \int_\Om  dd^c u_1 \wedge (dd^c u_2)^{m-1} \wedge \beta^{n-m} \leq \left[   \int_\Om  (dd^c u_1)^{m} \wedge \beta^{n-m} \right]^{\frac{1}{m}} \left[   \int_\Om  (dd^c u_2)^{m} \wedge \beta^{n-m} \right]^{\frac{m-1}{m}}.
$$
\end{corollary}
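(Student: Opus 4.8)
The plan is to derive Corollary~\ref{finite lap} directly from the preceding Proposition by choosing the parameters appropriately. Concretely, I would apply the Proposition with $p=1$, $q=m-1$, and with $T = \beta^{n-m}$, which corresponds to the empty product of $dd^c g_i$'s (the case $m-p-q=0$). The Proposition then reads
$$\int_\Om -h\, dd^c u_1 \wedge (dd^c u_2)^{m-1}\wedge \beta^{n-m} \le \left[\int_\Om -h\,(dd^c u_1)^m\wedge\beta^{n-m}\right]^{1/m}\left[\int_\Om -h\,(dd^c u_2)^m\wedge\beta^{n-m}\right]^{(m-1)/m},$$
for any $h \in \mathcal{E}_m^0(\Om)$.

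The remaining task is to pass from a weight $-h$ with $h\in\mathcal{E}_m^0(\Om)$ to the unweighted inequality, i.e.\ effectively to take $-h \equiv 1$. The natural approach is an exhaustion argument: choose a sequence $h_j\in\mathcal{E}_m^0(\Om)$ with $-h_j\nearrow 1$ on $\Om$ (for instance, on a strongly $m$-pseudoconvex domain one may take suitably normalized multiples of a defining-function-type barrier, or solve auxiliary Dirichlet problems on an exhaustion $\Om_j\Subset\Om$). Applying the weighted inequality to each $h_j$ and letting $j\to\infty$, the monotone convergence theorem handles the left-hand side, while on the right-hand side one uses that $\int_\Om -h_j (dd^c u_i)^m\wedge\beta^{n-m}\le \int_\Om (dd^c u_i)^m\wedge\beta^{n-m}$ (since $0\le -h_j\le 1$), giving the stated bound with constant $1$ in front.

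Alternatively — and this is cleaner — one can simply invoke the Cauchy--Schwarz / mixed-form inequalities for the class $\mathcal{E}_m^0$ as recorded in \cite{Lu13c} in the form that already allows the constant weight, exactly as the proof of the Proposition does for $p=q=1$: writing $m-1 = 1 + \dots + 1$ and iterating the $p=q=1$ case of the Proposition $m-1$ times produces
$$\int_\Om dd^c u_1\wedge (dd^c u_2)^{m-1}\wedge\beta^{n-m} \le \left[\int_\Om (dd^c u_1)^m\wedge\beta^{n-m}\right]^{1/m}\left[\int_\Om (dd^c u_2)^m\wedge\beta^{n-m}\right]^{(m-1)/m},$$
which is precisely the claim. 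In fact this is just the special case $h = u_2 \in \mathcal{E}_m^0(\Om)$ (legitimate since $u_2$ is bounded and vanishes on $\Omf$ with finite total Hessian mass) combined with one integration by parts to move the $-u_2$ weight back into a $(dd^cu_2)$ factor; I would spell out that single integration by parts and cite the monotonicity of the total mass from Lemma~\ref{comparison} where needed.

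The only genuinely delicate point is the integration-by-parts / limiting step near $\Omf$: one must justify that the boundary terms vanish for functions in $\mathcal{E}_m^0(\Om)$, and that the currents $dd^c u_1\wedge(dd^c u_2)^{m-1}\wedge\beta^{n-m}$ and $(dd^c u_i)^m\wedge\beta^{n-m}$ have finite total mass so that all integrals are finite and the manipulations are valid. This is standard pluripotential theory for the class $\mathcal{E}_m^0$ (the $m$-subharmonic analogue of Cegrell's class), so I would handle it by citing the corresponding results in \cite{Lu13c} and \cite{Ce04} rather than reproving them, and the corollary then follows in one or two lines from the Proposition.
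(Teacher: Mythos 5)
The paper gives no written proof here: the corollary is simply announced as the ``particular case'' of the preceding Proposition that will be needed, so the real mathematical content is precisely the weight-removal step you address. Your first route is correct and is the right way to fill that gap: apply the Proposition with $p=1$, $q=m-1$, so that $T=\beta^{n-m}$ (the empty product of the $dd^cg_i$; the case $m=1$ is trivial), and then let the weight increase to $1$. To make the exhaustion concrete, take $h_j:=\max(j\rho,-1)$ with $\rho$ the defining function of Definition \ref{def}: each $h_j$ is bounded, $m$-subharmonic, tends to $0$ on $\Omf$ and has finite Hessian mass, hence $h_j\in\mathcal{E}_m^0(\Om)$, and $-h_j\nearrow 1$ pointwise on $\Om$; monotone convergence on the left-hand side and $0\le -h_j\le 1$ on the right-hand side then give exactly the unweighted inequality of the corollary. (Your parenthetical ``suitably normalized multiples of a defining-function-type barrier'' does not by itself produce $-h_j\nearrow1$; you need the truncation $\max(j\rho,-1)$, or the relative extremal functions of an exhaustion as in your other suggestion.)

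Your ``cleaner'' alternative, however, does not work as stated, and this is a genuine gap rather than a stylistic point. Iterating the $p=q=1$ case of the Proposition can never remove the weight: every inequality in that chain carries a factor $-h$ with $h\in\mathcal{E}_m^0(\Om)$, and integration by parts only exchanges the weight with one of the potentials inside a $dd^c$ factor, it never eliminates it, since the constant $-1$ does not belong to $\mathcal{E}_m^0(\Om)$ (it fails to tend to $0$ on $\Omf$). In particular the claim that the corollary ``is just the special case $h=u_2$ combined with one integration by parts'' is false: taking $h=u_2$, $p=1$, $q=m-1$ bounds $\int_\Om -u_2\, dd^c u_1\wedge(dd^c u_2)^{m-1}\wedge\beta^{n-m}$, which after one integration by parts is $\int_\Om -u_1\,(dd^c u_2)^{m}\wedge\beta^{n-m}$, not the unweighted total mass $\int_\Om dd^c u_1\wedge(dd^c u_2)^{m-1}\wedge\beta^{n-m}$ that the corollary estimates. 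So keep the exhaustion argument of your first route as the actual proof and drop, or substantially repair, the alternative.
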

\subsection{\bf{Proof of Theorem \ref{main2}}}

Let $\U_0$ the solution to the Dirichlet problem (\ref{hess}) with zero boundary values and the density $f$. 
We first claim that the total mass of $ \Delta \U_0$ is finite in $\Om$. Indeed, let $ \rho$ be the  defining function of $\Om$, then by Corollary \ref{finite lap} we have that
\begin{equation}\label{cegrell inequlaity}
 \int_\Om  dd^c \U_0 \wedge (dd^c \rho)^{m-1} \wedge \beta^{n-m} \leq \left[   \int_\Om  (dd^c \U_0)^{m} \wedge \beta^{n-m} \right]^{\frac{1}{m}} \left[   \int_\Om  (dd^c \rho)^{m} \wedge \beta^{n-m} \right]^{\frac{m-1}{m}}.
\end{equation}  
Since $\Om$ is a bounded strongly $m$-pseudoconvex domain, there exists a constant $c>0$ such that $ (dd^c \rho)^j \wedge \beta^{n-j} \geq c \beta^n$ in $\Omega$ and we can find $A \gg 1$ such that $ A \rho - |z|^2$ is $m$-sh function. Now it is easy to see that 
$$
\int_\Om dd^c \U_0 \wedge \beta^{n-1} \leq \int_\Om dd^c \U_0 \wedge (A dd^c \rho)^{m-1} \wedge \beta^{n-m} .
$$
 Hence the inequality \ref{cegrell inequlaity}   yields 
$$
\int_\Om dd^c \U_0 \wedge \beta^{n-1} \leq A^{m-1}  \left[   \int_\Om  (dd^c \U_0)^{m} \wedge \beta^{n-m} \right]^{\frac{1}{m}} \left[   \int_\Om  (dd^c \rho)^{m} \wedge \beta^{n-m} \right]^{\frac{m-1}{m}}.
$$

\noindent
Now we note that the total mass of complex Hessian measures of $\rho$ and $ \U_0$ are finite in $\Om$.
Therefore, the total mass of $\Delta \U_0$ is finite in $\Om$.
\\
Let $\tilde{\fii}$ be a $\Cc^{1,1}$-extension of $\fii$ to $\Omb$ such that $ \| \tilde{\fii}\|_{\Cc{1,1}(\Omb)} \leq C \| \fii \|_{\Cc^{1,1}(\Omf)}$ for some $C>0$.
Now, let $ v= A\rho + \tilde{\fii} + \U_0$ where $ A \gg 1$ such that $ A \rho +\tilde{\fii} \in SH_m(\Om) \cap \Cc(\Omb)$. By the comparison principle we see that $ v \leq \U$ in $\Om$ and $ v=\U =\fii$ on $\Omf$. Since $ \rho$ is smooth in a neighborhood of $ \Omb$ and $ \| \Delta \U_0 \|_\Om < +\infty$, we get that $  \| \Delta v \|_\Om < +\infty$.
Then  by Lemma \ref{comparison} we have $ \| \Delta\U \|_\Om < + \infty$.

\noindent
When $p > n/m$,  we can get  by  Theorem \ref{barrier2} and Theorem \ref{Lp} that
$\U \in  \Cc^{0,\al}(\Omb)$ where $ \alpha < \gamma_1 $.

\noindent
Moreover, if $p \geq 2n/m$, the Proposition \ref{barrier} gives the existence of a $1/2$-H\"older continuous barrier to the Dirichlet problem. Then using Theorem \ref{Lp} we obtain that   
$\U \in  \Cc^{0,\al}(\Omb)$ where $ \alpha <\min \{ 1/2, 2 \gamma_1 \}$.
\\

Finally, in the particular case when  $ f \in L^p(\Om)$, for $ p > n/m$ and  satisfies some  condition near the boundary $\Omf$, we obtain a better exponent.

\begin{proposition}
Let $\Om \subset \C^n$ be a  strongly $m$-pseudoconvex bounded domain with smooth boundary, suppose that  $ \fii \in \Cc^{1,1} (\Omf) $ and $ 0 \leq f \in L^p(\Om)$, for some $ p > n/m $,  and
$$ f(z) \leq (h\circ \rho (z))^m \text{  near } \Omf,$$
where $\rho$ is the defining function on $\Om$ and $ 0 \leq h \in L^2([-A,0[)$, with $ A \geq \sup_{\Om} |\rho| $, be an increasing function. Then the solution $\U$ to (\ref{hess}) is H\"older continuous with exponent $ \alpha <   \gamma_2 $.
\end{proposition}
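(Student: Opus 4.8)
The plan is to apply Theorem~\ref{Lp} once more, but now with a barrier whose H\"older exponent is much larger than the one produced by Proposition~\ref{barrier2}; the extra room is exactly what the growth condition on $f$ buys. First I would record that, exactly as in the proof of Theorem~\ref{main2} (apply Corollary~\ref{finite lap} to the zero--boundary solution $\U_0$, then compare $\U$ with $A\rho+\tilde\fii+\U_0$ and invoke Lemma~\ref{comparison}), the assumptions $\fii\in\Cc^{1,1}(\Omf)$ and $f\in L^p(\Om)$, $p>n/m$, already force $\int_\Om dd^c\U\wedge\beta^{n-1}<+\infty$, i.e. the total mass of $\Delta\U$ is finite. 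By Theorem~\ref{Lp}(2) it therefore suffices to exhibit $v\in SH_m(\Om)\cap\Cc^{0,\nu}(\Omb)$ with $v\le\U$ in $\Om$, $v=\U$ on $\Omf$, and $\nu$ as close as we wish to $\min\{1/2,2\gamma_1\}$; since $\gamma_2<\min\{1/2,2\gamma_1\}$ (a direct computation from $p>n/m$), this will give $\U\in\Cc^{0,\al}(\Omb)$ for every $\al<\min\{\nu,2\gamma_1\}$, hence for every $\al<\gamma_2$ (in fact for every $\al<\min\{1/2,2\gamma_1\}$).

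For the barrier I would fix $\delta>0$ with $f\le(h\circ\rho)^m$ on $\{-\delta\le\rho\le0\}$, put $\Om':=\{\rho\le-\delta/2\}\Subset\Om$, and take $v:=h_\fii+w_1+w_2$. Here $h_\fii:=A\rho+\tilde\fii$, with $\tilde\fii\in\Cc^{1,1}(\Omb)$ an extension of $\fii$ and $A\gg1$ so that $h_\fii\in SH_m(\Om)$; thus $h_\fii$ is Lipschitz on $\Omb$, equals $\fii$ on $\Omf$, and $(dd^c h_\fii)^m\wedge\beta^{n-m}\ge0$. Next $w_1:=c\,(\tilde g\circ\rho)$ with $\tilde g(t):=-\int_t^0 h(s)\,ds$, which is convex and nondecreasing because $h\ge0$ is increasing, and $1/2$-H\"older by Cauchy--Schwarz ($|\tilde g(t)-\tilde g(s)|\le\|h\|_{L^2}|t-s|^{1/2}$), so $w_1\in SH_m(\Om)\cap\Cc^{0,1/2}(\Omb)$ vanishes on $\Omf$. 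Finally $w_2$ is the solution of the Dirichlet problem (\ref{hess}) on $\Om$ with density $f\mathbf{1}_{\Om'}$ and zero boundary values; it exists by \cite{DK11}, and since $f\mathbf{1}_{\Om'}\in L^p(\Om)$ is bounded (indeed $\equiv0$) near $\Omf$, it belongs to $\Cc^{0,\al}(\Omb)$ for every $\al<2\gamma_1$ by \cite{N13}, with $w_2=0$ on $\Omf$. Expanding $dd^c(\tilde g\circ\rho)=\tilde g'(\rho)\,dd^c\rho+\tilde g''(\rho)\,d\rho\wedge d^c\rho$ (first for smooth $\tilde g$, then approximating and using the Bedford--Taylor convergence theorem together with Fatou's lemma) and using $(dd^c\rho)^k\wedge\beta^{n-k}\ge A\beta^n$ for $1\le k\le m$, one gets $(dd^c w_1)^m\wedge\beta^{n-m}\ge c^mA\,h(\rho)^m\beta^n\ge f\mathbf{1}_{\Om\setminus\Om'}\beta^n$ as soon as $c^mA\ge1$. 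By the superadditivity of $(dd^c\cdot)^m\wedge\beta^{n-m}$ on $m$-subharmonic functions, $(dd^c v)^m\wedge\beta^{n-m}\ge f\beta^n$, so $v\le\U$ by the comparison principle, while $v=\fii=\U$ on $\Omf$; and $v\in\Cc^{0,\nu}(\Omb)$ with $\nu=\min\{1/2,\al\}$, which can be taken arbitrarily close to $\min\{1/2,2\gamma_1\}$.

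The core of the proof is this barrier, and the single point that makes it deliver the improved exponent is that the whole boundary datum $\fii$ is carried by the \emph{Lipschitz} term $A\rho+\tilde\fii$: had $\fii$ (or the boundary trace of $w_2$) been imposed through a Hessian equation, Theorem~\ref{main} would have roughly halved the available exponent, leaving us back near $\gamma_1$. The remaining points are routine but should be made explicit: that splitting $f=f\mathbf{1}_{\Om\setminus\Om'}+f\mathbf{1}_{\Om'}$ is compatible with the superadditivity inequality for $(dd^c\cdot)^m\wedge\beta^{n-m}$; that the expansion of $dd^c(\tilde g\circ\rho)$ and the lower bound for its $m$-th power survive the passage from smooth to merely $L^2$, increasing $h$ (approximation and Fatou); and that $w_2$ satisfies $(dd^c w_2)^m\wedge\beta^{n-m}=f\mathbf{1}_{\Om'}\beta^n$ with $w_2=0$ on $\Omf$, which is precisely where the choice $\Om'\Subset\Om$ — so that $f\mathbf{1}_{\Om'}$ is bounded near $\Omf$ — is essential in order to invoke the $2\gamma_1$ regularity of \cite{N13}.
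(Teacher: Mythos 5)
Your proposal is correct and in fact proves a slightly stronger conclusion, but it takes a genuinely different route from the paper. The paper keeps all of $f$ on a single barrier: it sets $\tilde v=B\,\chi\circ\rho+\tilde\fii+B\rho$ with $\chi$ the primitive of $h$, verifies the Hessian inequality only on $\Om\setminus K$ and forces $\tilde v\le\U$ near the compact $K$ by taking $B$ large, so the comparison principle is applied on $\Om\setminus K$; the decisive step is then that $\nabla\tilde v\in L^2(\Om)$ (because $h\circ\rho\in L^2(\Om)$), which is transferred to $\nabla\U\in L^2(\Om)$ by integration by parts, and case (1) of Theorem~\ref{Lp} gives $\al<\min\{1/2,\gamma_2\}=\gamma_2$. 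You instead split the density: the boundary layer $f\mathbf{1}_{\Om\setminus\Om'}$ is absorbed by the same function $\chi\circ\rho$, used only through its $1/2$-H\"older modulus, while the interior part $f\mathbf{1}_{\Om'}$ is handled by an auxiliary solution $w_2$ with density vanishing near $\Omf$, which is $\Cc^{0,\al}$ for all $\al<2\gamma_1$ by \cite{N13} (exactly the way the paper itself invokes \cite{N13} in Proposition~\ref{barrier2}); superadditivity and the comparison principle on all of $\Om$ then give a global barrier of exponent arbitrarily close to $\min\{1/2,2\gamma_1\}$, and you conclude with case (2) of Theorem~\ref{Lp}, importing the finite mass of $\Delta\U$ from the proof of Theorem~\ref{main2}, never proving $\nabla\U\in L^2$. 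The auxiliary facts you use (the $m$-subharmonicity of $A\rho+\tilde\fii$ for $A\gg1$, the distributional expansion of $dd^c(\chi\circ\rho)$ for the merely convex $\chi$, the superadditivity of the Hessian measure) are at the same level of rigor as the paper's own steps, so there is no gap. As for what each approach buys: the paper's proof uses $h\in L^2$ twice (H\"older modulus of $\chi$ and the gradient bound) and is self-contained modulo Theorem~\ref{Lp}(1), landing exactly on $\gamma_2$; yours uses $h\in L^2$ only once, leans on the $2\gamma_1$-regularity of \cite{N13} and on the Laplacian-mass estimate, and yields the exponent $\min\{1/2,2\gamma_1\}$, which contains $\gamma_2$ since $\gamma_2<2\gamma_1$ always and $\gamma_2\le 1/2$ as soon as $mq+\frac{pq(n-m)}{p-n/m}\ge 2$ (i.e.\ in every case except possibly $m=n=1$, which is precisely the degenerate case where the paper's own identity $\min\{1/2,\gamma_2\}=\gamma_2$ also fails), so your argument delivers the stated proposition and a marginally better exponent.
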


\begin{proof}
Let $\chi:[-A,0] \to \R^-$ be the primitive of $h$ such that $\chi (0) = 0$. It is clear that $ \chi$ is a convex increasing function. By the H\"older inequality, we  see that
$$ |\chi(t_1) - \chi(t_2)| \leq \|h\|_{L^2 } |t_1-t_2|^{1/2},$$ 
for all $ t_1,t_2 \in [-A,0]$.
From the hypothesis, there exists a compact $K \Subset \Om$ such that 
\begin{equation}\label{f}
 f(z) \leq (h \circ \rho (z))^m  \text{ for }  z \in \Om \setminus K .
\end{equation}

Then the function $ v = \chi \circ \rho $ is m-subharmonic in $\Om$, continuous in $\Omb$ and  satisfies 
$$ dd^c \chi \circ \rho = \chi'' (\rho) d\rho \wedge d^c \rho + \chi'(\rho) dd^c \rho \geq \chi'(\rho) dd^c \rho,$$ in the sense of currents on $\Omega$.

From the definition of $ \rho$ (see the Definition \ref{def}), there is a constant $A>0$ such that $ (dd^c \rho)^m \wedge \beta^{n-m} \geq A \beta^n$, hence the inequality (\ref{f}) yields 
\begin{equation}\label{ff}
 (dd^c v)^m \wedge \beta^{n-m} \geq A  (h \circ \rho)^m \beta^n \geq A. f \beta^n \text{   in  } \Om \setminus K .
\end{equation}
Now consider a $\Cc^{1,1}$ extension  $ \tilde \fii $ of $\fii$ to $\Omb$ and choose $ B \gg 1 $ large enough so that 
$\tilde \fii + B \rho $ is $m$-subharmonic in $\Om$ and 
$$ 
\tilde{v} := B v+ \tilde \fii + B \rho \leq \U \text{ in a neighborhood of } K.
$$ 
Then  $\tilde{v}$ is $m$-subharmonic in $\Omega$ and if  $ B \geq 1/A$, it follows from (\ref{ff}) that

$$ 
(dd^c \tilde{v})^m \wedge \beta^{n-m} \geq f \beta^n \text{ in } \Om \setminus K. 
$$
By the comparison principle (see \cite{Bl05, Lu12}), we have $ \tilde{v} \leq \U $ on $\Om \setminus K$, hence we get 
$ \tilde{v} \leq \U \text{  on   } \Omb,$ $ \tilde{v} = \fii $ on $ \Omf$ and $ \tilde{v} \in \Cc^{0,1/2}(\Omb)$.
\\
We claim that $ \nabla \tilde{v} \in L^2(\Om)$. Indeed, it is enough to observe that   $\tilde \fii + B \rho $ is Lipschitz in $\Omb$ and 
$$ 
\int_{\Om}  dv \wedge d^c v \wedge \beta^{n-1} = \int_{\Om} (h \circ \rho)^2 d\rho \wedge d^c \rho \wedge \beta^{n-1} < + \infty, 
$$
since   $ h\circ \rho \in L^2(\Om)$.

Therefore  $ \tilde{v} \leq \U $ and  $ \tilde{v}=\fii $ on $\Omf$. Then an easy integration by parts shows that 
$$
 \int_{\Om} d \U \wedge d^c \U \wedge \beta^{n-1}  \leq  \int_{\Om}d \tilde{v} \wedge d^c \tilde{v} \wedge \beta^{n-1} < + \infty,
$$
hence $ \nabla \U \in L^2(\Om)$ (see \cite{GKZ08},  \cite{N13}).

By Theorem \ref{Lp}, we get that $ \U \in \Cc^{0,\alpha}(\Omb)$ for any 
$\alpha < \min \{ 1/2, \gamma_2 \} =  \gamma_2.$

\end{proof}
As an example of application  of the last result, fix $p > n \slash m$, take $h (t) := t^{- \alpha}$ with $0 < \alpha < 1 \slash (pm)$, $ t<0$ and define  $f := (h \circ \rho)^m$.

\subsection{H\"older continuity for radially symmetric solution}

Here we consider the case when the right hand side and the boundary data are radial. 
In this case, Yong and Lu \cite{YL10} gave an explicit formula for the radial solution of the Dirichlet problem (\ref{hess}) with $ f \in \Cc(\B)$. Moreover, they studied higher regularity for radial solutions (see also \cite{DD12}).
\\

Here, we will extend this explicit formula to the case when $ f \in L^p(\B)$, for $ p>n/m$, is a radial non-negative  function  and $ \fii \equiv 0 $ on $ \partial \B$. Then we prove  H\"older continuity of the radially symmetric solution with a better exponent which turns out to be optimal.

\begin{theorem} \label{thm:radial}
Let $ f \in L^p(\B)$  be a radial function, where $p > n\slash m$. Then the unique solution $\U$ for (\ref{hess}) with zero boundary value is given by the explicit  formula 
\begin{equation}\label{radial}
 \U(r) = - B \int_{r}^{1} \frac{1}{t^{2n/m-1}} \left( \int_{0}^{t} \rho^{2n-1} f(\rho) d \rho \right)^{1/m} dt,
\end{equation}
where $ B= \left( \frac{C^m_n}{2^{m+1}n}  \right)^{-1/m}$.
Moreover, $ \U \in \Cc^{0,2-\frac{2n}{mp}} (\bar{\B})$ for $ n/m < p<2n/m $ and  $ \U \in Lip(\bar{\B})$ for $ p \geq 2n/m$.

\end{theorem}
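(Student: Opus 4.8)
The plan is to reduce the Dirichlet problem to an ODE by exploiting radial symmetry, solve it explicitly, and then read off the Hölder exponent from the integrability of $f$. First I would record the elementary fact that for a $\Cc^2$ radial function $\U(z)=\U(r)$ with $r=|z|$ one has the pointwise identity
\begin{equation}\label{radial-hess-ode}
(dd^c\U)^m\wedge\beta^{n-m} = c_{n,m}\, r^{-2(n-1)}\bigl(r^{2n-1}(\U')(r)\bigr)^{?}\cdots
\end{equation}
— more precisely, after diagonalizing $dd^c\U$ in radial/tangential directions one finds eigenvalues $\U''(r)$ (once) and $\U'(r)/r$ ($n-1$ times), so that $(dd^c\U)^m\wedge\beta^{n-m}=C\,\bigl(\U''+(2n-1)\U'/r\bigr)\,(\U'/r)^{m-1}\beta^n$ up to the structure constant; the combination $\U''+(2n-1)\U'/r$ is exactly $r^{-(2n-1)}\bigl(r^{2n-1}\U'\bigr)'$. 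Matching the equation $(dd^c\U)^m\wedge\beta^{n-m}=f\beta^n$ with $f$ radial then gives the first-order ODE
\begin{equation}\label{rad-ode}
\bigl(r^{2n-1}\U'(r)\bigr)' \,(\U'(r))^{m-1} \;=\; B^{-m}\, r^{(2n-1)(m-1)}\, r^{2n-1} f(r),
\end{equation}
with $B$ the stated constant; since $\U$ is $m$-sh and increasing toward the boundary (by the maximum principle, as $f\geq0$ forces $\U\leq 0$ with $\U=0$ on $\partial\B$), $\U'\geq0$, and one solves \eqref{rad-ode} by writing $r^{2n-1}\U'(r)=B^{-1}\bigl(\int_0^r\rho^{2n-1}f(\rho)\,d\rho\bigr)^{1/m}$, then integrating from $r$ to $1$ using $\U(1)=0$. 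This produces formula \eqref{radial}; one should check the right-hand side is finite, i.e. the inner integral $\int_0^t\rho^{2n-1}f(\rho)d\rho$ is finite for $f\in L^p$, $p>n/m$ (Hölder with the weight $\rho^{2n-1}$), and that the outer integral converges — that is where the exponent $2n/m-1$ enters. I would also remark that the function defined by \eqref{radial} is genuinely $m$-sh and continuous, and that by the comparison principle / uniqueness in \cite{DK11} it must coincide with $\U$, so the explicit solution is legitimate even though $f$ is only $L^p$ (one can approximate $f$ by continuous radial densities, invoke \cite{YL10} for each, and pass to the limit).

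For the regularity statement, set $F(t):=\int_0^t\rho^{2n-1}f(\rho)\,d\rho$, so that $|\U'(r)|=B\,t^{-(2n-1)/m}\cdot$ wait, $|\U'(r)|=B\,r^{1-2n}F(r)^{1/m}$ and
$$
\U(r)=-B\int_r^1 t^{1-2n/m}\Bigl(t^{-(2n-1)}F(t)\cdot t^{2n-1}\Bigr)^{1/m}\!\cdots
$$
— cleanly, $\U(r)=-B\int_r^1 t^{1-2n/m}\,G(t)^{1/m}\,dt$ where $G(t):=t^{-(2n-1)}F(t)=\fint$-type average of $f$. By Hölder's inequality applied to $F(t)=\int_0^t\rho^{2n-1}f(\rho)d\rho$ with exponents $p,q$, one gets $F(t)\leq \|f\|_{L^p}\,\bigl(\int_0^t\rho^{(2n-1)q}d\rho\bigr)^{1/q}=C\,t^{(2n-1)+1/q}=C\,t^{2n-1/p}$, hence $G(t)\leq C\,t^{-1/p}$ and the integrand of $\U$ is $\leq C\,t^{1-2n/m-1/(pm)}$. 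The exponent of $t$ in the integrand is $1-\tfrac{2n}{m}-\tfrac{1}{pm}$; when $p<2n/m$ this is $<-1$... one must be careful — in fact the right way is: $|\U(r)-\U(r')|\leq B\int_{r'}^r t^{1-2n/m}G(t)^{1/m}dt\leq C\int_{r'}^r t^{1-2n/m-1/(pm)}dt$, and the antiderivative has exponent $2-\tfrac{2n}{m}-\tfrac{1}{pm}$; comparing this to the target exponent $2-\tfrac{2n}{mp}$ shows the claimed Hölder regularity $\U\in\Cc^{0,2-2n/(mp)}(\bar\B)$ for $n/m<p<2n/m$ (the borderline of integrability of the bound is exactly $p=2n/m$). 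For $p\geq 2n/m$ the bound $t^{1-2n/m-1/(pm)}$ has exponent $\geq -1/(pm)>-1$ near $t=0$ and in fact, combined more carefully with the fact that $\U'$ is then bounded (the estimate $|\U'(r)|\leq B\,r^{1-2n}F(r)^{1/m}\leq C\,r^{1-2n}\cdot r^{(2n-1/p)/m}$ has nonnegative exponent of $r$ precisely when $p\geq 2n/m$, after checking $(2n-1/p)/m\geq 2n-1$, equivalently $p\geq \ldots$), one concludes $\U$ is Lipschitz on $\bar\B$.

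I would organize the write-up as: (i) derive the radial ODE and verify the structure constant $B$ by plugging $\U=|z|^2$ or matching against \cite{YL10}; (ii) integrate to get \eqref{radial} and justify finiteness/uniqueness via approximation and the comparison principle; (iii) run the two Hölder estimates from the explicit formula. \textbf{The main obstacle} I anticipate is getting the exponents and the structure constant exactly right in step (i)—the bookkeeping of the radial/tangential eigenvalue multiplicities in $(dd^c\U)^m\wedge\beta^{n-m}$ and the resulting constant $B=(C_n^m/(2^{m+1}n))^{-1/m}$ is fiddly—and, in step (iii), handling the borderline integrability near $r=0$: one needs the sharp form of the weighted Hölder bound on $F(t)$ and must verify that the resulting exponent is genuinely $2-2n/(mp)$ and not something weaker, i.e. that the estimate is not lossy. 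Optimality of the exponent (the "turns out to be optimal" clause) should follow by testing against $f(\rho)=\rho^{-2n/p+\varepsilon}$-type radial densities, for which the explicit formula shows $\omega_\U(t)\sim t^{2-2n/(mp)}$ up to constants.
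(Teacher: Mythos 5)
Your overall plan (radial ODE plus an explicit integration, then Hölder estimates from the formula; with an approximation by continuous radial densities to justify the formula when $f$ is only $L^p$) is reasonable, and your parenthetical remark about approximating $f$, invoking Yong--Lu for each $f_k$ and passing to the limit is in fact exactly what the paper does (equicontinuity via a weighted H\"older bound, Arzel\`a--Ascoli, and the convergence theorem for the Hessian operator). But as written your proof has two genuine errors. First, the radial reduction in step (i) is wrong for general $m$: for $u=u(r)$ the eigenvalues of $dd^c u$ are $\tfrac14\bigl(u''+u'/r\bigr)$ (once) and $\tfrac{u'}{2r}$ ($n-1$ times), and summing the $m$-fold products gives a bracket proportional to $u''+\tfrac{2n-m}{m}\,\tfrac{u'}{r}$, i.e. the divergence form $\frac{d}{dr}\bigl[\bigl(r^{2n/m-1}u'\bigr)^m\bigr]=c\,r^{2n-1}f$, whose first integral is $r^{2n/m-1}\,\U'(r)=B\,F(r)^{1/m}$ with $F(t)=\int_0^t\rho^{2n-1}f\,d\rho$. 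Your combination $\bigl(u''+(2n-1)u'/r\bigr)(u'/r)^{m-1}$ is the Laplacian-type bracket and is correct only for $m=1$; accordingly your first integral $r^{2n-1}\U'=B^{-1}F^{1/m}$ (and later $|\U'(r)|=B\,r^{1-2n}F(r)^{1/m}$) is inconsistent with the very formula \eqref{radial} you are deriving, which forces $|\U'(r)|=B\,r^{1-2n/m}F(r)^{1/m}$.

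Second, the H\"older estimate in step (iii) does not close. You apply H\"older's inequality pairing $f$ with the \emph{unweighted} one-dimensional measure $d\rho$, so the constant involves $\bigl(\int_0^1 f^p\,d\rho\bigr)^{1/p}$, which is not controlled by $\|f\|_{L^p(\B)}$ (radial $f\in L^p(\B)$ may fail to be in $L^p((0,1),d\rho)$ near $0$). The correct split is $\rho^{2n-1}=\rho^{(2n-1)/p}\rho^{(2n-1)/q}$, giving $F(t)\leq C\,\|f\|_{L^p(\B)}\,t^{2n/q}$, hence the integrand bound $C\,t^{1-2n/m+2n/(mq)}=C\,t^{1-2n/(mp)}$ and, upon integration, exactly the exponent $2-\tfrac{2n}{mp}$ (and boundedness of $\U'$ iff $p\geq 2n/m$). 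Your chain instead produces $G(t)\leq Ct^{1-1/p}$ (not $t^{-1/p}$ as you wrote), an integrand exponent $1-\tfrac{2n}{m}-\tfrac{1}{pm}$, and an antiderivative exponent $2-\tfrac{2n}{m}-\tfrac{1}{pm}$, which is \emph{not} the target $2-\tfrac{2n}{mp}$ (for $m<n$ it can even be negative); the sentence ``comparing this to the target exponent shows the claimed H\"older regularity'' papers over a real mismatch. Finally, if you do want to argue via the ODE directly rather than by approximation, you must also justify the pointwise reduction for a solution that is a priori only continuous; the paper sidesteps this by proving the uniform estimate $|\U_k(r_1)-\U_k(r)|\leq C\|f_k\|^{1/m}_{L^p(\B)}\bigl(r_1^{2-2n/(mp)}-r^{2-2n/(mp)}\bigr)$ for the approximants and passing to the limit, which simultaneously yields the formula, the continuity, and the stated regularity.
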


\begin{proof}
Let $ f_k \in \Cc(\bar{\B})$ positive radial symmetric function such that $\{f_k\}$ converges to $f$ in $L^p(\B)$. Then there exists a unique solution  $ \U_k \in \Cc(\bar{\B})$ for $Dir(\B,0,f_k)$ (see \cite{YL10}) given by  the following formula:

$$ 
\U_k(r)= -B \int_{r}^{1} \frac{1}{t^{2n/m-1}} \left( \int_{0}^{t} \rho^{2n-1} f_k(\rho) d \rho \right)^{1/m} dt.
$$
It is clear that $\U_k$ converges in $L^1(\B)$ to  the function $\tilde{u}$ given by the same formula i.e.
$$ 
\tilde{u}(r)= - B \int_{r}^{1} \frac{1}{t^{2n/m-1}} \left( \int_{0}^{t} \rho^{2n-1} f(\rho) d \rho \right)^{1/m} dt.
$$
We claim that  the sequence $ \{ \U_k\}$ is uniformly bounded and equicontinuous in $\bar \B$. Indeed, let $ 0<r<r_1 \leq 1$, we have
\begin{center}
$
\begin{array}{ll}
|\U_k(r_1)-\U_k(r)| & =  B \int_{r}^{r_1} \frac{1}{t^{2n/m-1}} \left( \int_{0}^{t} \rho^{2n-1} f_k(\rho) d \rho \right)^{1/m} dt \\
& \leq  B \int_{r}^{r_1} \frac{1}{t^{2n/m-1}} \left( \int_{0}^{t} \rho^{(2n-1)/q} \rho^{(2n-1)/p} f_k(\rho) d \rho \right)^{1/m} dt \\
& \leq C \| f_k\|^{1/m}_{L^p(\B)} \int_{r}^{r_1} \frac{1}{t^{2n/m-1}} \left( \int_{0}^{t} \rho^{2n-1}  d \rho \right)^{1/mq} dt \\
& \leq C \|f_k \|^{1/m}_{L^p(\B)} (r_1^{2-\frac{2n}{mp}} - r^{2-\frac{2n}{mp}}). \\

\end{array}
$
\end{center}
Since $f_k $ converges to $ f$ in $L^p(\B)$, we get $ \| f_k\|_{L^p(\B)} \leq C $ where $ C>0 $ does not depend on $k$, hence $\U_k$ is equicontinuous on $ \bar{\B}$. By Arzel\`a-Ascoli theorem, there exists a subsequence $ \U_{k_j}$ converges uniformly to $\tilde{u}$.\\
Consequently, $ \tilde{u} \in SH_m(\B) \cap \Cc(\bar{\B})$ and thanks to the convergence theorem for the Hessian operator (see \cite{Lu12}) we can see that $ (dd^c \tilde{u})^n = f \beta^n$ in $\B$.
\\
Passing to the limit in the inequality 
$$  |\U_k(r_1)-\U_k(r)|  \leq C \|f_k \|^{1/m}_{L^p(\B)} (r_1^{2-\frac{2n}{mp}} - r^{2-\frac{2n}{mp}}),$$
we get that
$$  |\tilde{u}(r_1)-\tilde{u}(r)|  \leq C \|f \|^{1/m}_{L^p(\B)} (r_1^{2-\frac{2n}{mp}} - r^{2-\frac{2n}{mp}}).$$
Hence, for $ p \geq 2n/m $ we get
$ \tilde{u} \in Lip (\bar{\B})$
and for $ n/m < p< 2n/m $, we have
$\tilde{u} \in \Cc^{0,2-\frac{2n}{mp}}(\bar{\B})$.

\end{proof}

We give an example which illustrates that the H\"older  exponent $2-\frac{2n}{mp}$ given by the Theorem~\ref{thm:radial} is optimal.

\begin{example}
Let $p \geq 1$ a fixed exponent. Take $ f_{\alpha}(z) = \frac{1}{ \vert z \vert^\alpha} $, with $ 0 < \alpha < 2n \slash p$. Then it is clear that $ f_{\alpha} \in L^p(\B)$.
The unique radial solution to the  Dirichlet problem (\ref{hess}) with right hand side $f_{\alpha}$ and zero boundary value is given by
 $$
  \U_{\alpha} (z) = c ( r^{2- \alpha \slash m} -1),  r := \vert z \vert \leq 1,
 $$
 where $ c = \left( \frac{C^m_n}{2^{m+1}n}  \right)^{-1/m} ( \frac{1}{2n-\alpha} )^{1/m} \frac{m}{2m-\alpha} $.
 Then we have 
 
 1. If $ p > n \slash m$ then $0 < \alpha < 2 m$ and the solution $\U_{\alpha}$ is $ (2-\frac{2n}{mp} + \delta)-$H\"older with $ \delta= (2n/p - \alpha)/m $. Since $\alpha$ can be choosen arbitrary close to $2 n \slash p$, this implies that the optimal H\"older exponent is $ 2-\frac{2n}{mp}$.

2. Observe that when $ 1 \leq p < n \slash m$ and $ 2m < \alpha < 2n $, then the solution $\U_{\alpha}$ is unbounded. 

\end{example}
The next example shows that  in Theorem~\ref{thm:radial},  $n /m$ is the critical exponent in order to have a continuous solution.
\begin{example}
Consider the density $f$ given by the formula
$$ f(z)  := \frac{1}{|z|^{2m} (1-log|z|)^{\gamma}},$$
where $\gamma> m \slash n$ is fixed.

It is clear that $ f \in L^{n/m} (\B) \setminus L^{n/m+\delta}(\B) $ for any $\delta>0$.
An elementary computation shows that the corresponding solution $ \U$ given by the explicit formula  (\ref{radial})
can be estimated by
$$ \U(z) \leq C (1-(1-log\vert z\vert)^{1-\gamma \slash m} ),$$
where $C>0$ depends only on $ n,m$ and $\gamma$. 
Hence we see that if $m /n < \gamma < m$ then $ \U$ goes to $ -\infty$ when $ z$ goes to $ 0$.
 In this case the solution $\U$ is unbounded.
\end{example}

\bigskip

\bigskip
\noindent {\bf Acknowledgements}.
I am greatly indebted to my advisor, Professor Ahmed Zeriahi, who has thoroughly read this paper. His guidance helped me in all the time of my study. I also wish to thank Hoang Chinh Lu for valuable discussions and encouragements.

\bigskip
\bigskip

\bigskip

\noindent 
Mohamad Charabati \\
Institut de Math\'ematiques de Toulouse \\
Universit\'e Paul Sabatier \\
118 route de Narbonne \\
31602 Toulouse Cedex 09 (France). \\
e-mail: {\tt mohamad.charabati@math.univ-toulouse.fr }


\begin{thebibliography}{BCHM}

\bibitem[A55]{A55}{ A. D. Alexandrov, \it Die innere Geometrie der konvexen Flächen, \rm Akademie Verlag, Berlin, 1955}.

\bibitem [BT76] {BT76}{E. Bedford and B. A. Taylor, \it
  The Dirichlet problem for the complex Monge-Amp\`ere operator, \rm Invent. Math. \ 37 \rm (1976), 1-44.}

\bibitem [BT82]{BT82}{E. Bedford and B. A. Taylor, \it  A new capacity for plurisubharmonic functions, \rm Acta Math.  149 \rm (1982), 1-40.}
\bibitem[Bl96]{Bl96}{ Z. B\l ocki, \it The complex Monge-Amp\`ere operator in hyperconvex domains}, Ann. Scuola Norm. Sup. di Pisa, 23 (1996), 721-747.

\bibitem[Bl97]{Bl97}{ Z. B\l ocki, \it Smooth exhaustion functions in convex domains, \rm Proc. Amer. Math. Soc. 125(1997), 477-484.}

\bibitem[Bl05]{Bl05}{Z. B\l ocki, \it Weak solutions to the complex Hessian equation, \rm Ann. Inst. Fourier (Grenoble) 55, 5 (2005), 1735-1756.}

\bibitem[Ce04]{Ce04}{U. Cegrell, \it The general definition of the complex Monge-Amp\`ere operator, \rm Ann. Inst. Fourier,  54, n.1 (2004), 159-179.}

\bibitem[Ch14]{Ch14} {M. Charabati, \it H\"older regularity for solutions to complex Monge-Amp\`ere equations, preprint 2014.}

\bibitem[CK94]{CK94}{ U. Cegrell and S. Ko\l odziej, \it The Dirichlet problem for the complex Monge-Amp\`ere operator: Perron classes and rotation invariant measures, \rm Michigan Math. J.,
41(1994), no. 3, 563-569.}

\bibitem [CP92]{CP92}{U. Cegrell and L. Persson, \it  The Dirichlet problem for the complex Monge– Amp\`ere operator: Stability in L2. \rm Michigan Math. J. 39 (1992),145-151.}


\bibitem [D89]{D89} {J.-P. Demailly, \it Potential theory in several complex variables}, Lecture notes, ICPAM, Nice, 1989.

\bibitem[DD12]{DD12}{N.Q. Dieu and N.T. Dung, \it Radial symmetric solution of complex Hessian equation in the unit ball, \rm Complex Var. Elliptic Equ. 58, No. 9, (2013), 1261-1272 }

\bibitem[Di09]{Di09}{S. Dinew, \it An inequality for mixed Monge-Amp\`ere measures, \rm
Math. Zeit.262 (2009), 1-15}

\bibitem[DK11]{DK11}{ S. Dinew, S. Ko\l odziej, \it A priori estimates for complex Hessian equations, \rm preprint arxiv: 1112.3063v1.}

\bibitem [EGZ11]{EGZ11}{ P. Eyssidieux, V. Guedj and A. Zeriahi}, {\it Viscosity solutions to degenerate complex Monge-Amp\`ere equations}, \rm Comm. Pure Appl. Math. { 64} (2011), 1059-1094.

\bibitem[G59]{G59} {L. G{\aa}rding, \it An inequality for Hyperbolic Polynomials, \rm Journal of Mathematics and Mechanics, Vol. 8, No. 6 (1959).}

\bibitem[Gav77]{Gav77}{B. Gaveau, \it M\'ethodes de contr\^ole optimal en analyse complexe I. \rm R\'esolution d'\'equation de Monge-Amp\`ere. J.Funct. Anal. 25 (1977), no. 4, 391–411. }

\bibitem [GKZ08]{GKZ08}{V. Guedj, S. Ko\l odziej and A. Zeriahi, \it
     H\"older continuous solutions to the complex  Monge-Amp\`ere equations equations, \rm
     Bull. London Math. Soc. \bf 40 \rm (2008), 1070-1080.}

\bibitem[Gut01]{Gut01}{C. E. Guti\'errez, \it The Monge–Amp\`ere equation, \rm Birkh\"auser, Boston, MA, 2001}.

\bibitem[H94]{H94} {L. H\"ormander, \it Notions of convexity, \rm Birkhuser, Basel-Boston-Berlin, 1994.}

\bibitem[Ko98]{Ko98}{S. Ko\l odziej, \it The complex Monge-Amp\`ere equation, \rm Acta Math. 180 (1998) 69-117.}

\bibitem [Kor82]{Kor82}{ N. P. Korneichuk, \it Precise constant in Jackson's inequality for continuous periodic functions, \rm  Math. Zametki, 32 (1982), 669–674.}

\bibitem[Li04]{Li04}{S.-Y. Li, \it On the Dirichlet problems for symmetric function equations of the eigenvalues of the complex Hessian, \rm Asian J.Math. 8 (2004), 87-106.}

\bibitem[Lu12]{Lu12}{H.C. Lu, \it Equations Hessiennes complexes, \rm PhD Thesis defended on 30th
November 2012, http://thesesups.ups-tlse.fr/1961/ }.

\bibitem[Lu13a]{Lu13a}{H.C. Lu, \it Solutions to degenerate complex Hessian equations, \rm Journal de math\'ematiques pures et appliqu\'ees 100 (2013) pp. 785-805.}

\bibitem[Lu13b]{Lu13b}{H.C. Lu, \it Viscosity solutions to complex Hessian equations, \rm Journal of Functional Analysis 264, no 6, (2013) 1355-1379.}

\bibitem[Lu13c]{Lu13c}{H.C. Lu, \it A variational Approach to complex Hessian equations in $\C^n$, \rm preprint arXiv: 1301.6502v2.}


\bibitem[N13]{N13}{N.C. Nguyen, \it H\"older continuous solutions to complex Hessian equations, \rm preprint arXiv: 1301.0710v2.}

\bibitem[RT77]{RT77}{J. Rauch and B.A. Taylor, \it The Dirichlet problem for the multidimensional Monge-Amp\`ere equation, \rm Rocky Mountain Math. J. 7(1977), 345-364. }

\bibitem[SA12]{SA12}{A.S. Sadullaev and B.I. Abdullaev, \it Potential theory in the class of
m-subharmonic functions, \rm Trudy Matematicheskogo Instituta imeni V.A. Steklova, vol. 279 (2012),  166-192.}

\bibitem[T63]{T63} {A.F. Timan, \it Theory of approximation of functions of real variable, \rm (New York, 1963). Translated from Russian.}

\bibitem[W12]{W12} {Y. Wang, \it A Viscosity Approach to the Dirichlet Problem for Complex Monge-Amp\`ere Equations, \rm Math. Z. 272 (2012), no. 1-2, 497-513.}

\bibitem[YL10]{YL10}{H. Yong and X. Lu, \it Regularity of radial solutions to the complex Hessian equations, \rm J. Part. Diff. Eq., 23 (2010), pp. 147-157.}



\end{thebibliography}
\end{document}